\documentclass[10pt]{amsart}
\usepackage{threeparttable}
\usepackage{mathtools}
\usepackage[ansinew]{inputenc}\usepackage[T1]{fontenc}
\usepackage[all,cmtip]{xy}
\usepackage{tikz} 
\usetikzlibrary{shapes.geometric, arrows}  
\usepackage{geometry}
\usepackage{amscd,amssymb,verbatim,xcolor}
\usepackage{longtable, multirow}
\usepackage[all,cmtip]{xy}
\usepackage[colorlinks=true,
            linkcolor=blue,
            citecolor=blue,
            urlcolor=blue]{hyperref}

\date{\today}

\newcommand\bb[1]{{\mathbb     #1}}
\newcommand\C[1]{{\mathcal #1 }}

\newtheorem{theorem}{Theorem}[section]

\newtheorem{corollary}[theorem]{Corollary}

\newtheorem{definition}[theorem]{Definition}
\newtheorem{example}[theorem]{Example}
\newtheorem{lemma}[theorem]{Lemma}
\newtheorem{proposition}[theorem]{Proposition}
\newtheorem{remark}[theorem]{Remark}

\begin{document}

\setlength{\baselineskip}{1.2\baselineskip}
\title[Shimura correspondence]
{Unitary Shimura correspondence for complex classical groups}

\author[Wan-Yu Tsai]{Wan-Yu Tsai}
\address{Department of Mathematics, National Central University, No. 300, Zhongda Rd., Zhongli District, Taoyuan City 320317, Taiwan}
\email{wytsai@math.ncu.edu.tw}
\author[Kayue Daniel Wong]{Kayue Daniel Wong}

\address{School of Science and Engineering, The Chinese University of Hong Kong (Shenzhen), Longgang, Shenzhen, Guangdong 518172, P. R. China}
\email{kayue.wong@gmail.com}
\author[Hongfeng Zhang]{Hongfeng Zhang}
\address{School of Mathematics and Statistics, Huazhong University of Science and Technology, Wuhan, 430074, P. R. China}
\email{zhanghongf@pku.edu.cn}

\begin{abstract}
In this paper, we construct a lifting operator  from the 
Grothendieck group of admissible Harish-Chandra modules of
$G =\mathrm{SO}_{2n}(\mathbb C)$ (resp. $\mathrm{Sp}_{2n}(\mathbb C)$) to
that of genuine representations of $\mathrm{Spin}_{2n}(\mathbb C)$ (resp. $\mathrm{Spin}_{2n+1}(\mathbb C)$).
We determine the lift of the sum of special unipotent representations attached to any ${}^{\vee}\mathcal O \subseteq {}^{\vee}\mathfrak g$ explicitly.  In particular, the representations
occurring in these lifts, if nonzero, are genuine unipotent representations of
complex Spin groups and are unitary.  
As a consequence, the lifting
operator preserves unitarity on a large class of unitary representations.

\end{abstract}

\maketitle

\section{Introduction}
Let $G$ be a connected real reductive linear group, and let $G'$ be a suitable real form of the Langlands dual of $G$.  Inspired by the work of Shimura on modular forms, one expects a close relation between some stable linear combinations of unitary representations of $G$ and genuine representations of a certain nonlinear cover $\widetilde{G'}$ of $G'$.

More explicitly, let $\mathcal{G}(G)$ denote the Grothendieck group of admissible  finite-length $(\mathfrak{g},K)$-modules  of $G$, and let $\mathcal{G}^{\mathrm{st}}(G)$ be the subgroup consisting of virtual representations whose global characters are stable. Similarly, let 
$\mathcal{G}(\widetilde{G'})$
denote the Grothendieck group of admissible $(\mathfrak{g'},\widetilde{K'})$-modules.  
One would like to construct a map
$$\mathrm{Lift}: \mathcal{G}^{\mathrm{st}}(G) \longrightarrow \mathcal{G}(\widetilde{G'}).$$
The lifting map is defined at the level of global characters: if $\pi\in\mathcal{G}^{\mathrm{st}}(G)$ has character
$\Theta_\pi$, then $\mathrm{Lift}(\pi)$ is characterized by a prescribed
lifting formula for $\Theta_\pi$.  Here a virtual representation of $G$
is called stable if its global character is invariant under
conjugation by the algebraic closure of $G$.
We denote by $\mathcal{G}_{\mathrm{gen}}(\widetilde{G'})\subseteq \mathcal{G} (\widetilde{G'})$ the subgroup generated by 
genuine representations of $\widetilde{G'}$,  that is, those which  {\it do not} descend to a $(\mathfrak{g'},K')$-module. It is an interesting question whether such a lifting operator  maps certain components of the unitary dual of $G$ to the unitary dual of $\widetilde{G'}$, or, more specifically,  to its genuine unitary dual. 

\medskip
This question has been studied in several settings. For  $G = G' = \mathrm{GL}_n(\mathbb{R})$,
Adams and Huang (see \cite{AH97}) constructed an explicit lifting operator, 
where the condition of stability is vacuous. Explicit formulas for this lift are also provided. In particular, 
they proved  that the lift of an irreducible unitary representation is either zero or an irreducible unitary representation.
On the other hand, Adams \cite{A98} and Renard \cite{R98} constructed $\mathrm{Lift}$ between  $G = \mathrm{SO}(n+1,n)$ and $\widetilde{G'} = \mathrm{Mp}_{2n}(\mathbb{R})$ using different approaches.
 They showed that the map $\mathrm{Lift}$ preserves stability and maps stable sums of tempered representations to stable sums of tempered representations. In this setting,  however, it remains unclear in general whether this lifting preserves unitarity.
 
 In a more general setting, Adams and Herb \cite{AH10} established a framework for lifting stable sums of standard representations from \( G \) to \( \widetilde{G'} \) when \( G \) is of simply-laced type, so that \( G \) and \( G' \) share the same adjoint type. In combination with Kazhdan--Lusztig polynomials, character lifting provides a method for computing the lift of certain unipotent representations. For example, the first-named author \cite{T23} computed the lift of the trivial representation for simply-laced, split real semisimple groups. In that case, the lift is the sum of all irreducible genuine small representations of \( \widetilde{G} \) with infinitesimal character \( \rho/2 \), associated to a  specific nilpotent orbit, and characterized by maximal \( \tau \)-invariant. 

As for whether $\mathrm{Lift}$ preserves unitarity or not, there are only partial results. Namely, for $\mathrm{GL}_n(\mathbb{R})$, \cite{AH97} proved that the lift of an irreducible unitary representation is either zero or an irreducible unitary representation, while no general result is known for other groups. 
For the genuine small representations
arising from the lift of the trivial representation in \cite{T23}, those
of types $A$ and $D$ are known to be unitary, while the corresponding
question in type $E$ remains open. In the forthcoming work \cite{Ts}, the first-named author
also studies the lift of the trivial representation for certain nonsplit
real groups of type $D$, where the representations occurring in the lift
are again unitary.

Towards this direction,  \cite{ABPTV07} studied such  a correspondence between some specified $G$ and the double cover of the split real form $G'$.
More precisely, they proved that if $G$ and $G'$ are classical, there is an inclusion between the spherical complementary series of $G$ and the \emph{pseudo}-spherical complementary series of 
$\widetilde{G'}$. In the special case when $G = \mathrm{SO}(n+1,n)$ and $\widetilde{G'} = \mathrm{Mp}_{2n}(\mathbb{R})$, they proved that the inclusion is indeed a bijection. 

\medskip
As for non-archimedean local fields, Kazhdan and Patterson \cite{KP84} studied various aspects of pseudo-spherical representations (called {\it theta-representations}) of some $k$-fold covers of $\mathrm{GL}_n$. More recently, these ideas have been  generalized to other split $p$-adic groups by Gao, Liu, and the first-named author (cf. \cite{GT22} for `small' orbits, \cite{GLT25} for the general case). 
The question of  unitarity
in this setting appears to be much less understood. 
\medskip

In this paper, we generalize the above notion of lifting to a (quasisplit) complex reductive group $G$ treated as a real Lie group. The case of $G = G'= \mathrm{GL}_n(\mathbb{C})$ is studied  by Tadi\'c in \cite{T96}. We consider the following complex classical cases 
$$
G=\mathrm{Sp}_{2n}(\mathbb C),
\quad
\widetilde {G'}=\mathrm{Spin}_{2n+1}(\mathbb C),
$$
and
$$
G=\mathrm{SO}_{2n}(\mathbb C),
\quad
\widetilde {G'}=\mathrm{Spin}_{2n}(\mathbb C).
$$
 Since we work on complex groups, the issue of stability is vacuous, and one only has to define the lift for all virtual characters in the Grothendieck group of $(\mathfrak{g},K)$-modules. 
 
 The complex case provides a particularly accessible setting in which to
investigate the relation between character lifting and unitarity.
Irreducible admissible representations admit an explicit description in
terms of Langlands--Zhelobenko parameters, while special unipotent
representations are governed by the character formulas of Barbasch and
Vogan \cite{BV85}. On the covering-group side, the genuine unipotent
representations of complex Spin groups relevant to our correspondence
have been studied in \cite{Br99,B17,BTs18, WZ23}, and are known to be unitary.
These ingredients allow us to study the lifting operation explicitly and to address the question of unitarity for a substantial class of representations.

 Under our definition of $\mathrm{Lift}$ from $G$ to $\widetilde{G'}$, normalized parabolic induction is preserved.
 This reduces the study of
the lift of special unipotent representations to those attached to
special nilpotent orbits $\mathcal O$ whose Barbasch--Vogan--Lusztig--Spaltenstein (BVLS) dual ${}^{\vee}\mathcal{O}$ is {\it quasi-distinguished} in the sense of \cite{BMSZ25}. 
 For such an orbit $\mathcal O$,
we show that the lift of {\it the sum of} all special unipotent representations
attached to $\mathcal O$ is nonzero if and only if every column of
$\mathcal O$ is divisible by $4$.  
When this condition holds,    the lift is
a sum of genuine unipotent representations of $\widetilde{G'}$ attached
to an explicitly determined nilpotent orbit $\mathcal O'$. These genuine unipotent representations were studied in  \cite{Br99} and \cite{B17} and are known to be unitary. 
More precisely, every special unipotent representation attached to
$\mathcal O$ has the same lift: a single genuine unipotent representation
in the symplectic case, and the sum of two genuine unipotent
representations related by an outer automorphism in the even orthogonal
case.  In particular, together with the compatibility of
$\mathrm{Lift}$ with normalized parabolic induction, this shows that
unitarity is preserved for a large class of unitary representations of
$G$.

\medskip
It is hoped that this manuscript will shed some light on the analogous problems for classical groups over other local fields, and on possible 
extensions of the work of \cite{A98}, \cite{R98} and \cite{ABPTV07}. In these cases, one needs to determine the appropriate stable sums before applying character lifting. In an ongoing work \cite{TW}, we study the lift of stable sums of special unipotent representations for certain simply laced real groups in the framework of \cite{AH10}, which will explain why, in the complex group setting, it is more natural to study the lift of a sum of special unipotent representations rather than that of the individual ones (see Remark \ref{rmk-stable} below). 
It would be interesting to understand to what extent the phenomena established here persist for more general real and $p$-adic groups; in particular, the relation between special pieces, genuine unipotent representations, and preservation of unitarity.

\medskip
The paper is organized as follows. In Section~2, we review character lifting, basics of complex groups, as well as special unipotent representations described in \cite{BV85}, and the description of classical special nilpotent orbits and their Lusztig quotients. In Section~3, we define the genuine lifting map for complex symplectic and even orthogonal groups, prove its compatibility with parabolic induction, compute the lift of special unipotent representations, and state the main theorem. Section~4 is devoted to its proof. We first establish the uniqueness of the relevant genuine unipotent representations using coherent continuation, left cells, Springer representations, and truncated induction, and then apply this uniqueness result to identify the character obtained by lifting.

\section{Preliminaries}
\subsection{Lifting of Representations} \label{sec-lifting}
Let $G$ be a complex linear Lie group treated as a real group, corresponding to the root datum $(X, \Delta, X^{\vee}, \Delta^{\vee})$ (unless stated otherwise, we assume $\mathbb{F} = \mathbb{C}$ and do not specify the local field $\mathbb{F}$ from now on). The Langlands dual group $G'$ of $G$ is a complex reductive group corresponding to the root datum $(X^{\vee}, \Delta^{\vee}, X, \Delta)$. As in \cite[Section 5]{A98} as well as \cite[Section 4]{R98}, we fix Cartan subgroups $H$ and $H'$ of $G$ and $G'$, and an isomorphism 
$$\phi: H \to H'$$
such that as elements of their respective standard representations, the spectra of $h \in H$ and $\phi(h) \in H'$ are identical (cf. \cite[Sections 2--3]{A98}) . For instance, if $G = \mathrm{SO}_{2n}(\mathbb{C}) = G'$ then one can simply take $\phi = \mathrm{id}$. Note that $\phi$ is defined up to inner automorphisms. Also, let
$$\phi^{\vee}: (\mathfrak{h}')^* \to \mathfrak{h}^*$$
by $\phi^{\vee}(\alpha) := 2(d\phi)^*(\alpha)/\langle \alpha,\alpha\rangle$, where $\langle\ , \ \rangle$ is an invariant form on $(\mathfrak{h}')^*$ such that $\langle \beta,\beta \rangle = 2$ for long roots $\beta \in X^*(H')$. Then $\phi^{\vee}$ defines a bijection between the root systems $\Delta$ and $\Delta^{\vee}$ by swapping the long and short roots. This in turn defines an isomorphism
$$\phi^W:W(G,H) \to W(G',H')$$
between the Weyl groups of $G$ and $G'$ such that $\phi^W(\sigma) \circ d\phi = d\phi \circ \sigma$ for all $\sigma \in W(G,H)$.


We now consider the covering group of $G'$. Suppose $G'$ has a finite covering group $\widetilde{G'}$, that is, there exists a nonsplit central extension 
$$1 \to Z \to  \widetilde{G'} \xrightarrow{p} G' \to 1$$ 
for some finite central subgroup $Z$.
Then $\widetilde{H'} := p^{-1}(H')$ is a Cartan subgroup of $\widetilde{G'}$. 

\medskip
The {\it lift} of a stable virtual representation $\pi \in \mathcal{G}^{\mathrm{st}}(G)$ can now be `defined' as follows: suppose $\Theta_{\pi}$ is its global character, treated as a conjugate-invariant function on the set of regular semisimple elements of $G$, then $\Theta_{\pi}$ is determined by its values on the set of regular elements of $H$. The lift of $\pi$ has character formula given by:
\begin{equation} \label{eq:lift}
\mathrm{Lift}(\Theta_{\pi})(\widetilde{h}') := \sum_{\phi(h)^k = p(\widetilde{h}') }\Delta(h,\widetilde{h}') \Theta_{\pi}(h),
\end{equation}
for some specified {\it transfer factor} $\Delta(h,\widetilde{h}')$  satisfying certain properties (see \cite[Equation (1.3)]{AH10} for instance). Let
$$\mathrm{Lift}(\pi) \in \mathcal{G}(\widetilde{G'})$$ 
be the virtual representation whose character formula equals $\mathrm{Lift}(\Theta_{\pi})$ in Equation \eqref{eq:lift}.

\medskip
The transfer factors $\Delta(h,\widetilde{h}')$ are given for various $G$ and $G'$. For instance, in \cite{T96}, Tadi\'c considers the case when $G = G' = \mathrm{GL}_{n}(\mathbb{C})$, $\Phi: H \to H'$ is the identity map, and $\widetilde{G'} = \widetilde{\mathrm{GL}}_{n}(\mathbb{C})$ is the $k$-fold cover of $G'$. Up to a twist by the central character of $G$, one can take $\Delta(h,\widetilde{h}') := \frac{1}{k^{n-1}}\frac{|D(h)|}{|D(\widetilde{h'})|}$, where $D=\prod_{\alpha \in \Delta^+(\mathfrak{g},\mathfrak{h})}(e^{\frac{\alpha}{2}}-e^{-\frac{\alpha}{2}})$ is the Weyl denominator. In particular, 
the resulting lift is a {\it non-genuine} representation of $\widetilde{G'}$. 

\medskip
As for {\it genuine} lifts, we focus on the special case when $k = 2$. For $\mathrm{GL}_{n}(\mathbb{C})$, such lift is defined in \cite[Section 3]{AH97} (see also Proposition \ref{prop:over2} below). Indeed, the main focus of \cite{AH97} is on $G = \mathrm{GL}_{n}(\mathbb{R})$, where one considers a double cover of  $G' = G$. When $G$ is a real form of a connected, simply laced complex group, \cite[Theorem~19.1 and Corollary~19.11]{AH10} show that 
\eqref{eq:lift} is the character of a genuine virtual 
representation of $\widetilde{G}=\widetilde{G'}$, or 0.

In Section \ref{sec:liftclassical} below, we will study the genuine lift of representations for $G = \mathrm{Sp}_{2n}(\mathbb{C})$ and $\mathrm{SO}_{2n}(\mathbb{C})$, with $\widetilde{G'} = \mathrm{Spin}_{2n+1}(\mathbb{C})$ and $\mathrm{Spin}_{2n}(\mathbb{C})$ being the double cover of $G' = \mathrm{SO}_{2n+1}(\mathbb{C})$ and $\mathrm{SO}_{2n}(\mathbb{C})$ respectively (the case of non-genuine lift is easier, and will be considered elsewhere). 

\subsection{Basics on Complex Lie Groups}
To begin with, we recall the classification of irreducible admissible representations of complex Lie groups, see \cite{D75} for more details.

\medskip
Let $G$ be a connected reductive complex Lie group viewed as a real Lie
group. Fix a maximal compact subgroup $K$, and a pair $(B,H=TA)$
where $B$ is a real Borel subgroup and $H$ is a $\theta$-stable Cartan
subgroup in $B$ such that $T=K\cap H$ and $A$ is the complement stabilized by $\theta$.
We write $\mathfrak{g}$, $\mathfrak{k}$, $\mathfrak{b}$, $\mathfrak{h}$,
$\mathfrak{t}$, $\mathfrak{a}$ as their respective Lie algebras, and their
complexifications are denoted by adding the subscript $\bullet_{\mathbb{C}}$. Let $W$ denote the Weyl group $W(\mathfrak{g}, \mathfrak{h})$.

\medskip
The {\bf Langlands--Zhelobenko parameter} of any irreducible module is a pair $(\lambda_L;\lambda_R) \in \mathfrak{h}_{\mathbb{C}}^* \cong \mathfrak{h}^* \times \mathfrak{h}^*$ such that $\mu:=\lambda_L-\lambda_R$ is the parameter of a character of $T$ in the
  decomposition of the $\theta$-stable
  Cartan subalgebra $H=T\cdot A,$ and $\nu:=\lambda_L+\lambda_R$ the
  $A$-character. Then the \textit{principal series representation}
  associated to $(\lambda_L;\lambda_R)$ is the $(\mathfrak{g}, K)$-module
$$
X(\lambda_L;\lambda_R) = X(\mu,\nu) = \mathrm{Ind}_B^G(e^{\mu}\otimes e^\nu \otimes
1)_{K\text{-finite}}
$$
with infinitesimal character $(\lambda_L;\lambda_R)$
(here the symbol $\mathrm{Ind}$ refers to normalized Harish-Chandra induction).

Let $J(\mu,\nu) = J(\lambda_L;\lambda_R)$ be the unique irreducible subquotient of
$X(\mu,\nu) = X(\lambda_L;\lambda_R)$ containing the lowest $K$-type with extremal weight
$\mu=\lambda_L-\lambda_R$. This is called the {\bf Langlands subquotient}.

\begin{proposition}[Parthasarathy--Rao--Varadarajan,  Zhelobenko] \label{prop:BVprop}
Assume $G$ is a complex connected reductive group viewed as a real group, and
let $(\lambda_L;\lambda_R)$ and $(\lambda_L';\lambda_R') $ be parameters described above. Then the
following are equivalent:
\begin{itemize}
\item $(\lambda_L;\lambda_R)=(w\lambda_L';w\lambda_R')$ for some $w\in W.$
\item  $X(\lambda_L;\lambda_R)$ and $X(\lambda_L'; \lambda_R')$ have
  the same composition factors with the same multiplicities.
\item The Langlands subquotient of $X(\lambda_L;\lambda_R)$, written as
$J(\lambda_L;\lambda_R)$, is the same as that of $X (\lambda_L'; \lambda_R')$.
\end{itemize}
Furthermore, every irreducible $(\mathfrak{g}_{\mathbb{C}}, K_{\mathbb{C}})$-module is equivalent to some $J(\lambda_L;\lambda_R)$.
\end{proposition}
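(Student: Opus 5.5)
The plan follows the classical route of Zhelobenko and of Parthasarathy--Rao--Varadarajan, organized around characters of principal series and the lowest $K$-type. First I will set up the structure theory. Since $G$ is complex, $K$ is a compact real form, $T$ is a maximal torus of $K$, and $B=HN$ is a Borel subgroup. The group $W$ acts on $\mathfrak{h}_{\mathbb C}^*\cong\mathfrak{h}^*\times\mathfrak{h}^*$ diagonally, via $(\lambda_L;\lambda_R)\mapsto(w\lambda_L;w\lambda_R)$, which is the same as $\mu\mapsto w\mu$, $\nu\mapsto w\nu$. The restriction of $X(\mu,\nu)$ to $K$ is $\mathrm{Ind}_T^K(e^\mu)$ and depends only on the $W$-orbit of $\mu$. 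By Frobenius reciprocity and the Cartan--Helgason/PRV-type argument, the $K$-type with extremal weight $\mu$ (the dominant conjugate of $\mu$) occurs with multiplicity one. This makes $J(\mu,\nu)$ well defined.

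For the implication from the first bullet to the second, I will use the Harish-Chandra character formula for principal series. The character of $X(\lambda_L;\lambda_R)$ on the regular set of $H$ is
\[
\frac{\sum_{w\in W} e^{w\lambda_L}\,\overline{e^{w\lambda_R}}}{|D|^2}
\]
in holomorphic/antiholomorphic notation. This expression is visibly invariant under the diagonal $W$-action. Since characters determine composition factors with multiplicities in the Grothendieck group, the second bullet follows. Conversely, equal composition series give equal characters. Linear independence of the characters $h\mapsto e^{\lambda_L}(h)\overline{e^{\lambda_R}(h)}$ on $H$ then forces the two $W$-orbits to coincide, so the first two bullets are equivalent.

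For the third bullet, the first bullet implies it as follows. If the parameters are $W$-conjugate, the two principal series have the same composition factors. The minimal $K$-type $\delta_\mu$ occurs exactly once in each, and $\mu$, $w\mu$ give the same $K$-type, so the unique factor containing it is the same module. For the converse, I plan to show that $J(\mu,\nu)$ determines $(\mu,\nu)$ up to $W$. The lowest $K$-type of $J$ recovers $\mu$ up to $W$; fix $\mu$ dominant. The infinitesimal character of $J$ recovers the pair $(\lambda_L,\lambda_R)$ up to $W\times W$, not only the diagonal $W$. To pin down the diagonal orbit, I will use Vogan's theory of minimal $K$-types. The minimal $K$-type $\delta$ determines $\mu$ and the subgroup $W_\mu$, and $\nu$ is then determined up to $W_\mu$ by the infinitesimal character together with the choice made in the Langlands classification. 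This is \textbf{the main obstacle}: the issue is ambiguity between $W\times W$ and diagonal $W$. I would resolve it by noting that $(\lambda_L,\lambda_R)=(\frac{\mu+\nu}{2},\frac{\nu-\mu}{2})$. With $\mu$ fixed, any $(w_1,w_2)$ preserving the infinitesimal character and preserving $\mu$ up to $W_\mu$ must satisfy $w_1=w_2$ modulo the stabilizer of the pair. Alternatively, and more robustly, I would argue through the Langlands quotient theorem applied to the minimal parabolic, which asserts uniqueness of $(\mu,\nu)$ up to $W$ with $\mathrm{Re}\,\nu$ dominant.

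Finally, exhaustion comes from Casselman's subrepresentation theorem. Every irreducible $(\mathfrak g_{\mathbb C},K_{\mathbb C})$-module embeds in some principal series $X(\mu,\nu)$, since $B$ is minimal parabolic with $M=T$ abelian. Taking the minimal $K$-type shows that the module is the Langlands subquotient $J(\mu,\nu)$ for an appropriate $W$-conjugate of the parameter.
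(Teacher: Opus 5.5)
The paper gives no proof of this proposition; it is quoted from Zhelobenko, Parthasarathy--Rao--Varadarajan and Duflo, so your sketch has to stand on its own. Two parts of it are fine. First, the equivalence of the first two bullets: you use the induced character formula and the linear independence of the exponentials $e^{\lambda_L}\overline{e^{\lambda_R}}$ on $H$, and there is no cancellation since all coefficients are $+1$. Second, the implication from the first bullet to the third, via multiplicity one of $\delta_\mu$. The gap is the converse of the third bullet. Your main resolution claims that once $\mu$ is fixed, the infinitesimal character determines $\nu$ modulo $W_\mu$, i.e.\ $w_1=w_2$ modulo stabilizers. This is false. In $\mathrm{GL}_3(\mathbb{C})$ take $(\lambda_L;\lambda_R)=((0,1,3);(0,3,2))$ and $(\lambda_L';\lambda_R')=((3,0,1);(3,2,0))$. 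Both have $\lambda_L-\lambda_R=(0,-2,1)$, which is regular, so $W_\mu=\{e\}$. Since $\lambda_L',\lambda_R'$ are permutations of $\lambda_L,\lambda_R$, the infinitesimal characters agree. Yet the pairs are not diagonally conjugate, because such a $w$ would have to fix the regular weight $\mu$. By the proposition itself the two modules $J$ are non-isomorphic, although they share lowest $K$-type and infinitesimal character. So no argument using only these two invariants of $J$ can give this implication.

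Your fallback and your exhaustion step both lack the same bridge. Langlands' theorem concerns the Langlands quotient, whereas $J(\mu,\nu)$ is defined as the subquotient containing $\delta_\mu$. You must show that for $\mathrm{Re}\,\nu$ dominant the Langlands quotient contains $\delta_\mu$, i.e.\ that the long intertwining operator does not kill the (multiplicity-one) $\delta_\mu$-isotypic component. Moreover, the minimal parabolic only handles strictly dominant $\mathrm{Re}\,\nu$. On walls you need the full classification with tempered data on Levi subgroups, together with irreducibility of unitary principal series of complex groups, to rewrite these quotients as $J(\mu,\nu)$'s. Likewise, Casselman's embedding $\pi\hookrightarrow X(\mu,\nu)$ does not force $\delta_\mu$ to occur in $\pi$. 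In $\mathrm{SL}_2(\mathbb{C})$, the identity $\mathrm{triv}=X(\rho;\rho)-X(\rho;-\rho)$ shows that the irreducible unitary principal series $X(\rho;-\rho)$ embeds in $X(\rho;\rho)$ or in its contragredient $X(-\rho;-\rho)$. Both of these are spherical, yet $X(\rho;-\rho)$ has no $K$-fixed vector. Replacing the parameter by a diagonal $W$-conjugate does not help, since it changes neither $X$ nor $J$.

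Once exhaustion is genuinely proved (via this bridge plus the Langlands classification, or via Vogan's lowest $K$-type theory), the converse of the third bullet follows by counting from your first step. Let $\gamma$ run over the diagonal $W$-orbits of admissible parameters in one $W\times W$-orbit. The classes $[X(\gamma)]$ are linearly independent and lie in the span of the irreducibles with that infinitesimal character. If every such irreducible were a $J(\gamma)$ and $\gamma\mapsto J(\gamma)$ were not injective, that span would have strictly smaller dimension, a contradiction.
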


\subsection{Special Unipotent representations} \label{sec:specialunip}
In the 1980's, Arthur conjectured that a certain class of automorphic representations of a reductive group $G$ over a local field $F$ should
be associated to $^\vee G$-equivalence classes of homomorphisms
$$
\Phi:\C W_F\times \mathrm{SL}_2(\mathbb{C})\longrightarrow\ ^\vee G
$$
where $\C W_F$ is the Weil group, and $^\vee G$ is the Langlands dual group of $G$. For each equivalence class of homomorphisms, there is a packet of representations (called \textit{Arthur packet}) attached to it, and they are expected to satisfy certain conditions, such as stability and twisted endoscopy.

Special attention is given to the case when $\Phi\mid_{\C W_F}= \mathrm{triv}$, so that the representations in the Arthur packet corresponding to it have infinitesimal character equal to $^\vee h/2$,
where  $\{ ^\vee e,\ ^\vee h,\ ^\vee f\}$ is a Lie triple associated to $\Phi(\mathrm{SL}_2(\mathbb{C}))$. Suppose $^\vee e$ lies in the nilpotent orbit $^\vee\mathcal{O} \subseteq ^\vee\mathfrak{g}$, then the Arthur packet corresponding to $\Phi$ is called a {\it weakly unipotent packet}.

In this paper, we focus on the case when $G$ is a complex reductive Lie group (i.e. $F = \mathbb{C}$), and $^\vee \mathcal{O} \subseteq {}^\vee\mathfrak{g}$ is a special nilpotent orbit (in the sense of Lusztig). Then one has \begin{equation} \label{eq-BVLS}
\mathcal{O} \subseteq \mathfrak{g}  \longleftrightarrow {}^\vee\mathcal{O} \subseteq {}^\vee\mathfrak{g}
\end{equation}
under the {\it Barbasch--Vogan--Lusztig--Spaltenstein (BVLS) duality} among the special nilpotent orbits of $\mathfrak{g}$ and ${}^\vee\mathfrak{g}$. In such a case, the packet corresponding to $\Phi$ in the above paragraph is called the {\it special unipotent packet} $\mathcal{U}(\mathcal{O})$ of $G$ attached to the special nilpotent orbit $\mathcal{O} \subseteq \mathfrak{g}$ (from now on, we will only work on the group side $G$ instead of ${}^\vee G$). 

These representations are well-studied by Barbasch and Vogan \cite{BV85}. We record some of the results therein.

For an ideal $I\subseteq U(\mathfrak g)$, let
$\operatorname{gr}(I)\subseteq S(\mathfrak g)$ denote its associated
graded ideal with respect to the usual filtration on $U(\mathfrak g)$.
We write
$$
\mathcal V(I):=\mathrm{V}(\operatorname{gr}(I))\subseteq\mathfrak g^*
$$
for its annihilator variety, and identify $\mathfrak g^*$ with
$\mathfrak g$ via the Killing form.

\begin{theorem}[Barbasch--Vogan] \label{thm:BV}
Let $G$ be a complex linear Lie group, and $\mathcal{O} \subseteq \mathfrak{g}$ be a special nilpotent orbit in the sense of Lusztig. For each conjugacy class $I \subseteq \overline{A}(\mathcal{O})$ of the Lusztig quotient group, there is a corresponding $\sigma_I \in \widehat{W}$ attached to it by \cite[Theorem 4.7]{BV85} (see Proposition \ref{prop:barao} below). Let
$$R_I := \frac{1}{|W_{^\vee h}|}\sum_{w \in W} \mathrm{tr}_{\sigma_I}(w) X(^\vee h/2; w(^\vee h/2)),$$
where ${}^\vee h$ is the semisimple element of the Jacobson--Morozov triple of the Lusztig--Spaltenstein dual of $\mathcal{O}$. Then the following holds: 

\begin{itemize}
    \item[(a)] The irreducible representations in the special unipotent packet are parametrized by 
$$\mathcal{U}(\mathcal{O}) = \{X_{\sigma}\ |\ \sigma \in \overline{A}(\mathcal{O})^{\wedge}\}.$$
 \item[(b)] For each $X_{\sigma}$ appearing in (a), its character formula is given by
$$X_{\sigma} = \frac{1}{|\overline{A}(\mathcal{O})|}\sum_{x \in \overline{A}(\mathcal{O})} \mathrm{tr}_{\sigma}(x) R_{I_x},$$
where $I_x$ is the (unique) conjugacy class of $\overline{A}(\mathcal{O})$ containing $x$. 
  \item[(c)] Conversely, one has
$$R_{I} = \sum_{\sigma \in \overline{A}(\mathcal{O})^{\wedge}} \mathrm{tr}_{\sigma}(x) X_{\sigma}$$ 
for any $x \in I$. In particular, if $I = \{e\}$ is the trivial conjugacy class, then 
$$R_{\{e\}} = \bigoplus_{\sigma \in \overline{A}(\mathcal{O})^{\wedge}} (X_{\sigma})^{\oplus \dim(\sigma)}.$$
    \item[(d)] Consider the $U(\mathfrak{g}_{\mathbb{C}}) = U(\mathfrak{g}) \otimes U(\mathfrak{g})$-action on $X_{\sigma}$. Then its left and right annihilator ideals $\mathrm{LAnn}(X_{\sigma})$, $\mathrm{RAnn}(X_{\sigma}) \subseteq U(\mathfrak{g})$ satisfy:
$$\mathcal V(\mathrm{LAnn}(X_{\sigma})) =\mathcal{V} (\mathrm{RAnn}(X_{\sigma})) = \overline{\mathcal{O}}.$$

\end{itemize}
\end{theorem}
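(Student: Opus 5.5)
This theorem is a compilation of results from \cite{BV85}, so the plan is to recover each item from the structure of the coherent continuation representation at the integral infinitesimal character $\lambda = {}^\vee h/2$. I work in the Grothendieck group of $(\mathfrak g_{\mathbb C},K_{\mathbb C})$-modules with infinitesimal character $(\lambda;\lambda)$. By Proposition \ref{prop:BVprop}, a basis is given by the standard modules $X(\lambda; w\lambda)$, $w\in W/W_{{}^\vee h}$, and a second basis by their Langlands subquotients. On this space, $W\times W$ acts by coherent continuation. Since $\lambda$ may be singular, I first translate from a regular integral infinitesimal character, where the coherent continuation module is isomorphic to the regular representation $\mathbb C[W]$ with $W\times W$ acting by left and right multiplication. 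The singular module is then the image of this under the translation functor.

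The key step is to identify $R_I$ with an isotypic projection. The virtual module $\sum_w \mathrm{tr}_{\sigma}(w) X(\lambda; w\lambda)$ is, up to the factor $|W_{{}^\vee h}|$, the image under translation to $\lambda$ of the $\sigma$-isotypic component of $\mathbb C[W]$, taken with respect to one copy of $W$. I then use Kazhdan--Lusztig/Joseph theory: the irreducibles that survive translation to $\lambda$ and have annihilator in the maximal primitive ideal $J_{\max}(\lambda)$ are those in a single two-sided cell. By \cite{BV85}, this is the double cell attached to the special representation $\sigma_{\mathcal O}$, which corresponds to $\mathcal O$ via Springer and truncated induction. Via Lusztig's parametrization, the representations $\sigma_I$ appearing in the family of $\sigma_{\mathcal O}$ with nonzero projection are indexed by conjugacy classes of $\overline{A}(\mathcal O)$; this is the content of Proposition \ref{prop:barao}, which I take as given.

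Next I compute the cell representation. The special unipotent representations are the irreducibles in this cell that are annihilated by $J_{\max}(\lambda)$. Barbasch--Vogan show that, as a $W$-module, the span of their characters is Lusztig's representation $\bigoplus_{I}\sigma_I$ built from the family. Concretely, they show it is isomorphic to the space of class functions on $\overline A(\mathcal O)$. Under this isomorphism, $R_I$ corresponds to the indicator function of the class $I$, and $X_\sigma$ corresponds to the character $\mathrm{tr}_\sigma$. Character orthogonality on $\overline A(\mathcal O)$ then gives (b) and (c) simultaneously: (b) is the inverse Fourier transform of (c). Item (a) follows by counting, since the number of irreducibles equals the number of conjugacy classes, which equals $|\overline A(\mathcal O)^\wedge|$. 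The multiplicity statement in (c) for $I=\{e\}$ is obtained by setting $x=e$, so that $\mathrm{tr}_\sigma(e)=\dim\sigma$.

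Finally, (d) follows from the annihilators. The left and right annihilators are both $J_{\max}(\lambda)$, because the infinitesimal character is $(\lambda;\lambda)$ and the situation is symmetric. The associated variety of $J_{\max}({}^\vee h/2)$ is $\overline{\mathcal O}$, the BVLS dual of ${}^\vee\mathcal O$ \cite{BV85}. The main obstacle is the identification of the cell module with class functions on $\overline A(\mathcal O)$, particularly the sign and normalization matching $R_I$ to indicator functions. My plan is to rely on Lusztig's nonabelian Fourier transform together with \cite[Theorem 4.7]{BV85}, rather than reprove that identification here.
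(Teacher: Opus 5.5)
The paper gives no proof of this statement; it simply records it from \cite{BV85}. Deferring the essential input, \cite[Theorem 4.7]{BV85} together with Lusztig's Fourier transform, is therefore reasonable. However, the one step you actually carry out, deducing (b) and (c) from character orthogonality, fails as written.

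\textbf{The main gap: normalization.} Suppose $R_I\mapsto \mathbf 1_I$ and $X_\sigma\mapsto \mathrm{tr}_\sigma$. Let $Z(x)$ denote the centralizer of $x$ in $\overline A(\mathcal O)$. Column orthogonality, $\sum_\sigma \overline{\mathrm{tr}_\sigma(x)}\,\mathrm{tr}_\sigma=|Z(x)|\,\mathbf 1_{I_x}$, then gives
\[ R_{I_x}=|Z(x)|^{-1}\sum_\sigma \mathrm{tr}_\sigma(x)\,X_\sigma, \qquad X_\sigma=\sum_I \mathrm{tr}_\sigma(x_I)\,R_I . \]
Both formulas differ from (c) and (b) by the factor $|Z(x)|$. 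In particular you would get $R_{\{e\}}=|\overline A(\mathcal O)|^{-1}\sum_\sigma \dim(\sigma)\,X_\sigma$. Already for $\overline A(\mathcal O)=\mathbb Z/2$ this reads $R_{\{e\}}=\tfrac12(X_1+X_\epsilon)$ instead of $X_1+X_\epsilon$.

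This constant is not a convention you may choose. $R_I$ is fixed by its definition, including the factor $1/|W_{{}^\vee h}|$, so the identification must send $R_I\mapsto |Z(x)|\,\mathbf 1_I=\sum_\sigma\overline{\mathrm{tr}_\sigma(x)}\,\mathrm{tr}_\sigma$ for $x\in I$. Equivalently, the coefficients of $X_\sigma$ in terms of the $R_I$ are Lusztig's Fourier entries $\{(x,1),(1,\sigma)\}=\mathrm{tr}_\sigma(x)/|Z(x)|$, and this has to be checked against the definition of $R_I$. The error matters downstream: the Corollary after Theorem~\ref{thm:main} uses both $R_\emptyset=\bigoplus_\sigma X_\sigma$ and $\mathrm{Lift}(X_\sigma)=|\overline A(\mathcal O)|^{-1}\mathrm{Lift}(R_\emptyset)$. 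Your normalization changes each of these by a factor $2^q$.

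\textbf{Smaller points.}
\begin{itemize}
\item Counting only yields $|\mathcal U(\mathcal O)|=|\overline A(\mathcal O)^\wedge|$. It does not show that the specific combinations in (b) are irreducible characters, with sign $+$. That is exactly the deferred content of \cite{BV85}, so (a) and (b) come together from that input, not from counting.
\item In (d), maximality of $\mathrm{LAnn}$ and $\mathrm{RAnn}$ is the definition of special unipotent in \cite{BV85}. It is not forced by the infinitesimal character being $(\lambda;\lambda)$: most irreducibles there have non-maximal annihilators. The real input is $\mathcal V(J_{\max}({}^\vee h/2))=\overline{\mathcal O}$.
\item ${}^\vee h/2$ is integral only when ${}^\vee\mathcal O$ is even. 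For example, ${}^\vee\mathcal O=(3,2,2,1,1)\subseteq\mathfrak{so}_9$ gives $(1,\tfrac12,\tfrac12,0)$ for $\mathrm{Sp}_8(\mathbb C)$. For the general statement, your $\mathbb C[W]$ model with $W\times W$ acting must therefore be replaced by the integral Weyl group, as in \cite{BV85}. This is harmless for the orbits the paper actually uses (Remark~\ref{r:infchar}).
\item The standard modules $X(\lambda;w\lambda)$ depend only on the double coset $W_{{}^\vee h}\,w\,W_{{}^\vee h}$, not on $wW_{{}^\vee h}$. Your proposed ``basis'' is therefore not linearly independent.
\end{itemize}
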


\subsection{Classical Groups}
In this paper, we are only interested in the case when $G$ is classical. In such a case, the Lusztig quotient group $\overline{A}(\mathcal{O})$ is isomorphic to $(\mathbb{Z}/2\mathbb{Z})^q$ for some nonnegative integer $q$. In particular, it is abelian and all its conjugacy classes are singletons. To better describe the special unipotent packet, we need an explicit description of $\mathcal{O}$.

The classification of special nilpotent orbits, along with their Lusztig quotient $\overline{A}(\mathcal{O})$ in terms of the {\bf columns} (or {\bf dual partition}) of the Young diagrams attached to $\mathcal{O}$ are given as follows:
\begin{proposition}[\cite{W16}, Propositions 2.3--2.4] \label{prop:class} 
The classification of classical special nilpotent orbits in terms of columns is given as follows:

\noindent {\bf Type $B_n$:} Let $G = \mathrm{SO}_{2n+1}(\bb{C})$, then all nilpotent orbits in $\mathfrak{g}$ are parametrized by Young diagrams $\mathcal{O} = (a_{2k+1} \geq a_{2k} \geq \dots \geq a_1 \geq a_0)$ of size $2n+1$ such that $a_{2l}+a_{2l-1}$ is even for all $l$ (We insist that there is an even number of columns, by taking $a_0 = 0$ if necessary).\\
The orbit $\mathcal{O}$ is special if all non-zero $a_i$'s are odd, or the columns of even sizes occur only in the form $a_{2l} = a_{2l-1} = 2b$ (by convention, we take $a_{-1} = 0$). In particular, one must have $a_0 = 0$.\\

\noindent {\bf Type $C_n$:} Let $G = \mathrm{Sp}_{2n}(\bb{C})$, then all nilpotent orbits in $\mathfrak{g}$ are parametrized by Young diagrams $\mathcal{O} = (a_{2k} \geq a_{2k-1} \geq \dots \geq a_1  \geq a_0)$ of size $2n$ such that $a_{2l}+a_{2l-1}$ is even for all $l$ (We insist that there is an odd number of columns, by taking $a_0 = 0$ if necessary). \\
The orbit $\mathcal{O}$ is special if all non-zero $a_i$'s are even, or the columns of odd sizes occur only in the form $a_{2l} = a_{2l-1} = 2c+1$.\\

\noindent {\bf Type $D_n$:} Let $G = \mathrm{SO}_{2n}(\bb{C})$, then all nilpotent orbits in $\mathfrak{g}$ are parametrized by Young diagrams $\mathcal{O} = (a_{2k+1} \geq a_{2k} \geq \dots \geq a_1  \geq a_0)$ of size $2n$ such that $a_{2l}+a_{2l-1}$ is even for all $l$ (We insist that there is an even number of columns, by taking $a_0 = 0$ if necessary). The only exceptions are the following - if the Young diagram is of the form
$$(2\alpha_k, 2\alpha_k, 2\alpha_{k-1}, 2\alpha_{k-1},\dots, 2\alpha_1, 2\alpha_1),$$
i.e. the diagram is {\bf very even}, then there are two orbits $\mathcal{O}_{\mathrm{I}}$, $\mathcal{O}_{\mathrm{II}}$ attached to this diagram. These orbits are called {\bf very even orbits}.\\
The orbit $\mathcal{O}$ is special if all non-zero $a_i$'s are even, or the columns of odd sizes occur only in the form $a_{2l} = a_{2l-1} = 2d+1$. In particular, all very even orbits are special.

\bigskip
Then the Lusztig quotient $\overline{A}(\mathcal{O})$ of the classical special nilpotent orbits $\mathcal{O} \subseteq {}\mathfrak{g}$ in terms of columns are given as follows:

\noindent {\bf Type $B_n$:} Let $G = \mathrm{SO}_{2n+1}(\bb{C})$ and $\mathcal{O} = (a_{2k+1} \geq a_{2k} \geq \dots \geq a_1)$ be a special orbit (note that $a_0 = 0$ by the above discussions and is omitted). Separate all column pairs $a_{2m+1} = a_{2m} = \nu$ and all {\bf even} column pairs $a_{2l} = a_{2l-1} = \mu$ and get
\begin{align*}
    \mathcal{O} = {}\mathcal{O}'' \sqcup {}\mathcal{O}_{\mu} \sqcup {}\mathcal{O}_{\nu} := &(a_{2q+1}'' > a_{2q}'' \geq a_{2q-1}'' > a_{2q-2}'' \geq a_{2q-3}'' > \dots > a_2'' \geq a_1'') \\ &\sqcup (\mu_1, \mu_1, \dots, \mu_x, \mu_x) \sqcup (\nu_1, \nu_1, \dots, \nu_y, \nu_y)
\end{align*}
then $\overline{A}(\mathcal{O}) \cong \overline{A}(\mathcal{O}'') \cong (\bb{Z}/2\bb{Z})^q$.\\ 
		
\noindent {\bf Type $C_n$:} Let $G = \mathrm{Sp}_{2n}(\bb{C})$ and $\mathcal{O} = (a_{2k} \geq a_{2k-1} \geq \dots \geq a_1  \geq a_0)$ be a special orbit. Separate all column pairs $a_{2m+1} = a_{2m} = \nu$ and all {\bf odd} column pairs $a_{2l} = a_{2l-1} = \mu$ and get
    \begin{align*}
    \mathcal{O} = {}\mathcal{O}'' \sqcup {}\mathcal{O}_{\mu} \sqcup {}\mathcal{O}_{\nu} := &(a_{2q}'' \geq a_{2q-1}'' > a_{2q-2}'' \geq a_{2q-3}'' > \dots > a_2'' \geq a_1''  > a_0'')\\ &\sqcup (\mu_1, \mu_1, \dots, \mu_x, \mu_x) \sqcup (\nu_1, \nu_1, \dots, \nu_y, \nu_y) 
    \end{align*}
    then $\overline{A}(\mathcal{O}) \cong \overline{A}(\mathcal{O}'')\cong (\bb{Z}/2\bb{Z})^q$.\\ 

\noindent {\bf Type $D_n$:} Let $G = \mathrm{SO}_{2n}(\bb{C})$ and $\mathcal{O} = (a_{2k+1} \geq a_{2k} \geq \dots \geq a_1  \geq a_0)$ be a special, non-very even orbit. Separate all column pairs $a_{2m+1} = a_{2m} = \nu$ and  all {\bf odd} column pairs $a_{2l} = a_{2l-1} = \mu$ and get
    \begin{align*}
    \mathcal{O} = {}\mathcal{O}'' \sqcup {}\mathcal{O}_{\mu} \sqcup {}\mathcal{O}_{\nu} := &(a_{2q+1}'' > a_{2q}'' \geq a_{2q-1}'' > a_{2q-2}'' \geq a_{2q-3}'' > \dots > a_2'' \geq a_1'' > a_0'') \\ &\sqcup (\mu_1, \mu_1, \dots, \mu_x, \mu_x) \sqcup (\nu_1, \nu_1, \dots, \nu_y, \nu_y)
    \end{align*}
    then $\overline{A}( \mathcal{O}) \cong \overline{A}(\mathcal{O}'') \cong (\bb{Z}/2\bb{Z})^q$. Moreover, if $\mathcal{O}_{\mathrm{I}}$ and $\mathcal{O}_{\mathrm{II}}$ are very even, then  $\overline{A}(\mathcal{O}_{\mathrm{I}})$ and $\overline{A}(\mathcal{O}_{\mathrm{II}})$ are trivial.
\end{proposition}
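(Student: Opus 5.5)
The statement is a cited classification, so the plan is to rederive it from the standard partition classification of nilpotent orbits (Gerstenhaber, Collingwood--McGovern), Lusztig's characterization of special orbits, and the Lusztig--Sommers description of $\overline{A}(\mathcal{O})$. The argument is a translation from rows to columns.

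\textbf{Step 1: orbits.} Nilpotent orbits in $\mathfrak{so}_{m}$ (resp. $\mathfrak{sp}_{2n}$) correspond to partitions of $m$ (resp. $2n$) in which even (resp. odd) rows occur with even multiplicity. The exception is very even partitions in type $D$, which split into two orbits. Let $a_i$ be the columns. The multiplicity of the row length $j$ equals $a_{j}-a_{j+1}$, indexed from the longest column, and its parity is the parity of $a_j+a_{j+1}$. So the row condition becomes a parity condition on consecutive column pairs. For even rows in types $B$ and $D$ the pairing is $(a_{2l},a_{2l-1})$, after padding with $a_0=0$ so that the number of columns has the stated parity. For odd rows in type $C$ the analogous pairing is obtained with an odd number of columns. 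In the very even case, all rows even forces the columns to come in equal pairs $2\alpha,2\alpha$.

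\textbf{Step 2: special orbits.} I would use the standard row criterion. In type $B$, $\mathcal{O}$ is special iff its transpose is a $B$-partition. In types $C$ and $D$, $\mathcal{O}$ is special iff its transpose is a $C$-partition (resp. a $D$-partition). Since transposition swaps rows and columns, the condition becomes that the \emph{columns} satisfy the $B$/$C$/$D$ row rule. Together with the pairing constraint from Step 1, this forces the stated form. In type $B$ every even column sits in a pair $a_{2l}=a_{2l-1}$; in particular the total size $2n+1$ being odd forces $a_0=0$. Types $C$ and $D$ are the same with odd columns.

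\textbf{Step 3: Lusztig quotient.} $A(\mathcal{O})$ is an elementary abelian $2$-group generated by the distinct odd rows (types $B$, $D$) or the distinct even rows (type $C$), modulo the determinant relations. Sommers' description of $\overline{A}(\mathcal{O})$ keeps only certain generators, indexed by the "gaps" between consecutive row sizes of the relevant parity, with a fixed parity pattern of positions.

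\textbf{Step 4: translating to columns.} Equal column pairs $a_{2m+1}=a_{2m}$ and the even (resp. odd) pairs $a_{2l}=a_{2l-1}$ contribute no surviving generator. Removing such a pair leaves the surviving gaps unchanged. This reduces everything to $\mathcal{O}''$, which has the strict alternating pattern shown, and there each strict inequality $a''_{2i+1}>a''_{2i}$ (appropriately indexed) contributes exactly one $\mathbb{Z}/2$, giving $(\mathbb{Z}/2\mathbb{Z})^q$. Very even orbits have no odd rows, so $A(\mathcal{O})$ and hence $\overline{A}(\mathcal{O})$ are trivial.

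The main obstacle is Step 4. One must match Sommers' or Lusztig's row-based choice of surviving generators with the column pairing precisely, including edge conventions such as $a_0$ and $a_{-1}$. I would settle it by induction on the number of removable pairs, checking that deleting a pair of equal columns preserves $\overline{A}$ (for instance via Lusztig's symbol or special-piece description).
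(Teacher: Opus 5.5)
The paper gives no proof of this proposition; it is quoted from \cite{W16}. Your outline (partition classification, the transpose criterion for specialness, Sommers' description of $\overline{A}$) is therefore a reconstruction of the standard argument behind that citation.

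The orbit and specialness half is essentially right, but Step~2 misquotes the type $D$ criterion. In $\mathfrak{so}_{2n}$ an orbit is special iff its transpose is a $C$-partition (odd parts with even multiplicity), exactly as in type $C$, not a $D$-partition. With your criterion the regular orbit $[2n-1,1]$ would be non-special, since its transpose $[2,1^{2n-2}]$ has the even part $2$ once. Your later remark that types $C$ and $D$ behave the same with odd columns is what the correct criterion gives, so this is a slip, but it must be fixed. (A minor point: in Step~1 you index columns from the longest one, and in that convention the even-row pairing would be $(a_{2l},a_{2l+1})$. The pairing $(a_{2l},a_{2l-1})$ is correct only in the statement's convention, where $a_0$ is the shortest column.)

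The genuine gap is the $\overline{A}$ half. Step~4 is announced rather than proved, and the one concrete rule you give fails in type $D$. There $\mathcal{O}''=(a''_{2q+1}>a''_{2q}\geq\cdots\geq a''_1>a''_0)$ has $q+1$ strict steps $a''_{2i+1}>a''_{2i}$ ($0\le i\le q$), whereas $\overline{A}(\mathcal{O})\cong(\mathbb{Z}/2\mathbb{Z})^q$. Two examples:
\begin{itemize}
\item columns $(4,2)$, i.e.\ rows $[2,2,1,1]$ in $\mathfrak{so}_6\cong\mathfrak{sl}_4$, give one step but $\overline{A}=1$;
\item columns $(4,2,2,0)$, i.e.\ rows $[3,3,1,1]$ in $\mathfrak{so}_8$, give two steps but $\overline{A}\cong\mathbb{Z}/2\mathbb{Z}$.
\end{itemize}
So exactly one generator must be discarded, and ``appropriately indexed'' hides which one.

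Likewise, the invariance of $\overline{A}$ under deleting a $\nu$-pair or a $\mu$-pair is the whole content of $\overline{A}(\mathcal{O})\cong\overline{A}(\mathcal{O}'')$. It needs an actual argument, including independence of the choices when such pairs overlap (e.g.\ four equal odd columns).

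One clean way to settle both points is Lusztig's identification of $\overline{A}(\mathcal{O})$ with the group $\mathcal{G}_{\mathcal{F}}$ of the family containing $\mathrm{Spr}(\mathcal{O})$. That group is $(\mathbb{Z}/2\mathbb{Z})^d$ when the special symbol has $2d+1$ singles (types $B$, $C$), and $(\mathbb{Z}/2\mathbb{Z})^{d-1}$ when it has $2d$ singles (type $D$). It then remains to check directly from the columns two things: removing one of the allowed equal pairs does not change the number of singles, and $\mathcal{O}''$ has $2q+1$ singles in types $B$, $C$ and $2q+2$ in type $D$.
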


\begin{remark} \label{rmk-bmszdual}
    By the description of $\mathcal{O}$ above, one can compute the BVLS dual ${}^{\vee}\mathcal{O}$ in Equation \eqref{eq-BVLS} as follows: write $\mathbf{N} := [\mu_1, \mu_1, \dots, \mu_x, \mu_x] \sqcup [\nu_1, \nu_1, \dots, \nu_y, \nu_y]$ in all the above cases, then in terms of {\bf row sizes} of Jordan blocks, ${}^{\vee}\mathcal{O}$ is given by:
    \begin{itemize}
        \item{\bf Type $B_n$:} ${}^{\vee}\mathcal{O} = [a_{2q+1}''-1,  a_{2q}''+1, a_{2q-1}''-1, a_{2q-2}''+1, a_{2q-3}''-1, \dots, a_2''+1, a_1''-1] \sqcup \mathbf{N}$;
        \item{\bf Type $C_n$:} ${}^{\vee}\mathcal{O} = [a_{2q}''+1,  a_{2q-1}''-1, \dots, a_2''+1, a_1''-1, a_0''+1] \sqcup \mathbf{N}$;
        \item{\bf Type $D_n$:} ${}^{\vee}\mathcal{O} = [a_{2q+1}''-1, a_{2q}''+1,  a_{2q-1}''-1, \dots, a_2''+1, a_1''-1, a_0''+1]\sqcup \mathbf{N}$.
    \end{itemize}
This description is compatible with that in \cite[Section 2.3--2.8]{BMSZ25} where, in their formulations, our $\mu_i$'s and $\nu_i$'s in $\mathbf{N}$ consist of rows with {\bf bad parity} or {\bf balanced rows} \cite[Definition 2.21]{BMSZ25}, and the orbit ${}^{\vee}\mathcal{O}$ is {\bf quasi-distinguished} \cite[Section 10.1]{BMSZ25} precisely when $\mathbf{N} =
\emptyset$. Therefore, our description of $\overline{A}(\mathcal{O}) \cong \overline{A}({}^{\vee}\mathcal{O})$ is compatible with that in \cite[Remark 2.22]{BMSZ25} and \cite{S01}.
\end{remark}

Given the description of nilpotent orbits of $G$ above, one has the following:
\begin{theorem} \label{thm:reduce} \textup{(\cite[Section~2.9]{BMSZ25} and \cite[Proposition 8.10]{BV85})}
    Let $\mathcal{O}$ be a special classical nilpotent orbit, and $ \mathcal{O}'' \sqcup  \mathcal{O}_{\mu} \sqcup  \mathcal{O}_{\nu}$ be as given above such that ${}^{\vee}\mathcal{O}''$ is quasi-distinguished. Then the special unipotent representations attached to $ \mathcal{O}$ is given by
    \[\mathcal{U}( \mathcal{O}) = \left\{ \prod_{i = 1}^x \mathrm{triv}_{\mathrm{GL}_{\mu_i}} \times \prod_{j = 1}^y \mathrm{triv}_{\mathrm{GL}_{\nu_j}} \rtimes   \pi \ \Bigg|\ \pi \in \mathcal{U}(\mathcal{O}'') \right\},\]
    where $\prod_i \chi_i \times \prod_j \lambda_j \rtimes \pi := \mathrm{Ind}_P^G\left(\bigotimes_i \chi_i \otimes \bigotimes_j \lambda_j\otimes \pi \otimes {\bf 1} \right)$
    is the normalized induction from the parabolic subgroup $P = MN$ with 
     \begin{itemize}
         \item Levi subgroup $M := \prod_{i} \mathrm{GL}_{\mu_i}(\mathbb{C}) \times \prod_{j} \mathrm{GL}_{\nu_j}(\mathbb{C})  \times G''$, where $G''$ is of the same Lie type as $G$;
         \item $\chi_i$ is a representation of $\mathrm{GL}_{\mu_i}(\mathbb{C})$, $\lambda_j$ is a representation of $\mathrm{GL}_{\nu_j}(\mathbb{C})$, $\pi$ is a representation of $G''$; and 
         \item $\bf 1$ is the trivial representation of the unipotent radical $N$.
         \end{itemize}
\end{theorem}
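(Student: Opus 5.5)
The plan is to prove Theorem \ref{thm:reduce} at the level of characters, using the Barbasch--Vogan formulas of Theorem \ref{thm:BV} together with induction in stages. The first step is to compare the infinitesimal characters. By Remark \ref{rmk-bmszdual}, the rows of ${}^{\vee}\mathcal{O}$ are the rows of ${}^{\vee}\mathcal{O}''$ together with the pairs $\mu_i,\mu_i$ and $\nu_j,\nu_j$. Hence, after conjugating by $W$, ${}^\vee h/2$ splits as the concatenation of ${}^\vee h''/2$ with, for each pair, the coordinates $\rho_{\mathrm{GL}_{\mu_i}}$ (and their negatives), that is, $\pm(\frac{\mu_i-1}{2},\dots,-\frac{\mu_i-1}{2})$. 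This is precisely the infinitesimal character of $\mathrm{triv}_{\mathrm{GL}_{\mu_i}}$ placed in the Levi $M$. I would also record that $\mathcal{O}$ is induced from the orbit $0\times\dots\times 0\times\mathcal{O}''$ of $\mathfrak{m}$. In column language, adding two equal columns $\mu,\mu$ is exactly what induction from $\mathrm{GL}_\mu$ does, and one can check that this induction needs no collapse because of the parity conditions on $\mu_i,\nu_j$.

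The second step proves the inclusion of the induced representations into the packet. Write $\mathrm{triv}_{\mathrm{GL}_m}$ as the alternating sum $\sum_{w\in S_m}\mathrm{sgn}(w)X(\rho;w\rho)$ over principal series of $\mathrm{GL}_m(\mathbb C)$, and write $\pi\in\mathcal U(\mathcal O'')$ by Theorem \ref{thm:BV}(b) as a combination of $R_I''$. Induction in stages (normalized Harish-Chandra induction of principal series is again principal series) then expresses $\prod\mathrm{triv}\rtimes\pi$ as a combination of $X({}^\vee h/2;w\,{}^\vee h/2)$ with coefficients $\mathrm{Ind}_{W_M}^{W}(\mathrm{sgn}\boxtimes\sigma_I'')$ evaluated at $w$. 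This is the Barbasch--Vogan mechanism behind \cite[Proposition 8.10]{BV85}, and its key input is the truncated induction identity $j_{W_M}^{W}(\mathrm{sgn}\boxtimes\sigma''_I)=\sigma_I$. That identity should follow from Lusztig's description of $\sigma_I$ via symbols, because inserting equal column pairs does not change the ``$''$'' part governing $\overline{A}$. Since $\overline{A}(\mathcal O)\cong\overline{A}(\mathcal O'')$ by Proposition \ref{prop:class}, the $R_I$ for $\mathcal O$ correspond bijectively to those for $\mathcal O''$, and inverting via (b)/(c) gives the identity of virtual characters $\prod\mathrm{triv}\rtimes X''_\sigma=X_\sigma$.

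The third step is irreducibility, which I expect to be the main obstacle. The character identity alone only yields a virtual equality, so we must show that the induced module is irreducible and nonzero. For this I would use that $\mathbf N$ consists of bad-parity or balanced rows, as in \cite[Section 2.9]{BMSZ25}. For bad-parity rows the inducing parameter for $\mathrm{GL}_{\mu_i}$ is not linked to the $G''$ part by integral reflections, so irreducibility follows from a standard intertwining argument in the Langlands--Zhelobenko parameters of Proposition \ref{prop:BVprop}. Balanced rows are the hard case. There I would compare the Langlands subquotient, which has a unique lowest $K$-type, against the multiplicity-one count coming from (c): the total number of constituents of $\bigoplus_\sigma\prod\mathrm{triv}\rtimes X''_\sigma$ equals $|\overline{A}(\mathcal O)^\wedge|$, which forces each induced module to be irreducible. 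Finally, the equality $|\mathcal U(\mathcal O)|=|\mathcal U(\mathcal O'')|$ gives that the map is onto the full packet.
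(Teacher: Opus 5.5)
The paper gives no proof of this statement; it quotes \cite[Proposition 8.10]{BV85} and \cite[Section 2.9]{BMSZ25}. Measured against what those results must supply, your Step 2 has a genuine gap.

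Induction in stages writes $\prod\mathrm{triv}\rtimes R''_I$ as a combination of $X(\lambda;w\lambda)$, $\lambda={}^\vee h/2$, with $w$ running only over $W_M$. It does not give the $W$-sum with coefficients $\mathrm{tr}_{\mathrm{Ind}_{W_M}^W(\mathrm{sgn}\boxtimes\sigma''_I)}(w)$. For any class function $\chi$ on $W_M$, writing $w=uvu^{-1}$ and using Proposition \ref{prop:BVprop}, one gets
\[
\sum_{w\in W}\mathrm{tr}_{\mathrm{Ind}_{W_M}^W\chi}(w)\,X(\lambda;w\lambda)=\frac{1}{|W_M|}\sum_{u\in W}\mathrm{Ind}_P^G\Big(\sum_{v\in W_M}\chi(v)\,X_M(u^{-1}\lambda;vu^{-1}\lambda)\Big).
\]
So the $W$-sum also contains the inductions of the analogous $M$-characters at the other $M$-infinitesimal characters $u^{-1}\lambda\notin W_M\lambda$, and these are different modules in general. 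For example, take $\mathrm{GL}_3(\mathbb{C})$, $M=\mathrm{GL}_2\times\mathrm{GL}_1$, $\chi=\mathrm{sgn}$ and $\lambda=(\frac12,-\frac12,\frac32)$: you get three distinct induced virtual modules instead of one.

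Passing from $\mathrm{Ind}$ to $j$ is not free either. A constituent $\tau$ of $\mathrm{Ind}_{W_M}^W(\mathrm{sgn}\boxtimes\sigma''_I)$ other than the $j$-induced one contributes $\frac{1}{|W_\lambda|}\sum_w\mathrm{tr}_\tau(w)X(\lambda;w\lambda)$. This is nonzero as soon as $\tau$ has $W_\lambda$-invariants, and your sketch gives no reason for such terms to vanish. So $j_{W_M}^W(\mathrm{sgn}\boxtimes\sigma''_I)=\sigma_I$ plus induction in stages does not yield $\prod\mathrm{triv}\rtimes R''_I=R_I$.

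What is missing is the Barbasch--Vogan description of $R_I$ at ${}^\vee h/2$ as a signed sum over a suitable reflection subgroup. This is the mechanism of \cite[Proposition 6.6]{BV85} used in Example \ref{eg:charform}. If $R''_I=\sum_{w\in W''_{\flat,I}}\mathrm{sgn}(w)X(\lambda'';w\lambda'')$, then induction literally gives the signed sum over $\prod_i S_{\mu_i}\times\prod_j S_{\nu_j}\times W''_{\flat,I}$. You must then show that this enlarged subgroup computes $R_I$ on $G$, with the labels $I$ matched compatibly with $\overline{A}(\mathcal{O})\cong\overline{A}(\mathcal{O}'')$. That character identity is what \cite[Proposition 8.10]{BV85} supplies, not merely the Weyl-group combinatorics you attribute to symbols.

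Your Step 3 misplaces the difficulty. Normalized induction of an irreducible module is an honest finite-length module. So once $[\prod\mathrm{triv}\rtimes X''_\sigma]=[X_\sigma]$ holds in the Grothendieck group, the induced module has a single composition factor and equals $X_\sigma$.

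In fact the case $I=\emptyset$ alone suffices. The $2^q$ nonzero honest modules $\prod\mathrm{triv}\rtimes X''_{\sigma''}$ add up to $R_\emptyset$. By Theorem \ref{thm:BV}(c), $R_\emptyset$ is a multiplicity-free sum of $2^q$ irreducibles. Hence each induced module is irreducible and together they exhaust $\mathcal{U}(\mathcal{O})$, uniformly and with no bad-parity/balanced case split.

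The intertwining argument you propose for bad-parity rows is therefore unnecessary. As stated it would not suffice anyway: the half-integral coordinates of a $\mathrm{GL}_{\mu_i}$-block are integrally linked to one another through roots $e_k+e_l\notin\mathfrak{m}$, and also to other bad-parity blocks.
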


In view of the above theorem, we are reduced to studying the special unipotent representations attached to $ \mathcal{O} = \mathcal{O}''$. Unless otherwise specified, we will only focus on studying such orbits from now on. 

\begin{remark}\label{r:infchar}
    Suppose $G$ is of type $C$ or $D$. 
    For later use, we give a direct description of the infinitesimal
character ${}^\vee h/2$ in terms of the column partition of
$\mathcal O=\mathcal O''$.  Write
$r_i=a_i''/2$.  Then one can read off directly from the formulas in Remark \ref{rmk-bmszdual} that  
\begin{equation} \label{e:infchar}
\frac{{}^\vee h}{2}
=\bigsqcup_i
(r_{2i},r_{2i}-1,\ldots,1)
\sqcup
\bigsqcup_{j \geq 1}
(r_{2j-1}-1,r_{2j-1}-2,\ldots,0).
\end{equation}
up to Weyl conjugacy. Thus, \eqref{e:infchar} gives the common infinitesimal character of the representations in $\mathcal U(\mathcal O)$. 
\end{remark}

As mentioned in the paragraph after Theorem \ref{thm:BV}, since $\overline{A}(\mathcal{O}) \cong (\mathbb{Z}/2\mathbb{Z})^q$ is abelian, the $2^q$ conjugacy classes $I \subseteq \overline{A}( \mathcal{O})$ are singletons. Using \cite[Theorem 0.4]{L97} and \cite[Theorem 4.7(c)]{BV85}, the $\sigma_I \in \widehat{W}$ appearing in Theorem \ref{thm:BV} can be given by:

\begin{proposition} \label{prop:barao}
Let $\mathcal{O} =\mathcal{O}''$ be a special nilpotent orbit of classical type as given in Proposition \ref{prop:class}, and $\mathcal{P}(q)$ be the power set of $\{1,2, \dots, q\}$ (so that $|\mathcal{P}(q)| = 2^q$). For each $I \in \mathcal{P}(q)$, consider the nilpotent orbit
$$ \mathcal{O}_I := \begin{cases} (b_{2q} \geq b_{2q-1} \geq \dots \geq b_2 \geq b_1 \geq b_0) &\text{if }  \mathcal{O} \text{ is of Type } C \\
(b_{2q+1} \geq b_{2q} \geq b_{2q-1} \geq \dots \geq b_2 \geq b_1 \geq b_0)  &\text{if }  \mathcal{O} \text{ is of Type } B, D  \end{cases},$$
where 
$$(b_{2i}, b_{2i-1}) := \begin{cases} (a_{2i}''+1, a_{2i-1}'' -1) &\text{if } i \in I \\
(a_{2i}'', a_{2i-1}'') &\text{otherwise} \end{cases}$$
(in particular, $ \mathcal{O}_{\emptyset} =  \mathcal{O}$). Under a suitable choice of the bijection between $\mathcal{P}(q)$ and the conjugacy classes of $\overline{A}( \mathcal{O}) \cong (\mathbb{Z}/2\mathbb{Z})^q$ satisfying $\emptyset\leftrightarrow \{e\}$ (by abuse of notation, we use $I$ to denote the elements on both sides of the bijection), the $\sigma_I \in \widehat{W}$ in Theorem \ref{thm:BV} are given by
$$\sigma_I = \mathrm{Spr}( \mathcal{O}_I),$$
where $\mathrm{Spr}: \mathrm{Nil}( \mathfrak{g}) \hookrightarrow \widehat{W}$ is the Springer representation corresponding to any nilpotent orbit 
$ \mathcal{O} \in \mathrm{Nil}( \mathfrak{g})$.
\end{proposition}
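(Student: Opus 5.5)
The identification will come from combining \cite[Theorem 4.7(c)]{BV85} with Lusztig's description of Springer representations in a two-sided cell (\cite[Theorem 0.4]{L97}), rather than from any direct computation. Barbasch--Vogan show that the representations $\sigma_I$ for $I$ running over the conjugacy classes of $\overline{A}(\mathcal{O})$ are exactly the Springer representations $\mathrm{Spr}(\mathcal{O}',\psi)$ with the following two properties. First, $(\mathcal{O}',\psi)$ corresponds under Lusztig's map to the pair $(I,\mathrm{triv})$ of the special piece attached to $\mathcal{O}$. Second, $\sigma_{\{e\}}=\mathrm{Spr}(\mathcal{O},\mathrm{triv})$ is the special representation. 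For classical groups $\overline{A}(\mathcal{O})$ is elementary abelian (Proposition \ref{prop:class}), so every conjugacy class is a singleton and the centralizer of each element is the whole group. By \cite[Theorem 0.4]{L97}, the element $x\in\overline{A}(\mathcal{O})$ together with the trivial representation of its centralizer then corresponds to a pair $(\mathcal{O}_x,\mathrm{triv})$, where $\mathcal{O}_x$ lies in the special piece of $\mathcal{O}$. This piece consists of orbits contained in $\overline{\mathcal{O}}$ whose closures meet $\mathcal{O}$ only through the special piece. The required identity is therefore $\sigma_I=\mathrm{Spr}(\mathcal{O}_I,\mathrm{triv})$, and what remains is to show that the orbits indexed by $\overline{A}(\mathcal{O})$ in this way are exactly the $\mathcal{O}_I$ of the statement.

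The second step is combinatorial and uses the column description. Since ${}^\vee\mathcal{O}$ is quasi-distinguished, $\mathbf{N}=\emptyset$ and $\mathcal{O}=\mathcal{O}''$ has the strict inequalities displayed in Proposition \ref{prop:class}. For each $i\in I$, replacing $(a''_{2i},a''_{2i-1})$ by $(a''_{2i}+1,a''_{2i-1}-1)$ moves one box and gives a smaller orbit. I will first check that each $\mathcal{O}_I$ is a legitimate Young diagram of the correct type. The strict inequalities $a''_{2i+1}>a''_{2i}$ and $a''_{2i-1}>a''_{2i-2}$ guarantee that the columns remain weakly decreasing. In type $C$ (and dually $B$, $D$), the pair $(a''_{2i},a''_{2i-1})$ has equal parity, and the shifted pair keeps the parity condition $b_{2l}+b_{2l-1}$ even. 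The shifted pair consists of two columns of the "wrong" parity, unless the pair was already of wrong parity; this is what makes $\mathcal{O}_I$ nonspecial for $I\neq\emptyset$, while keeping it in the special piece.

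Next I will invoke the explicit description of special pieces in classical types, due to Kraft--Procesi and Lusztig, in terms of the Lusztig--Spaltenstein and Sommers analyses \cite{S01}. The orbits in the special piece of $\mathcal{O}$ are obtained by independently performing, for each of the $q$ "markable" column pairs, either nothing or this one-box move. These are precisely the $2^q$ orbits $\mathcal{O}_I$, each with trivial local system contributing to the special piece (its Springer fiber representation appears with $\mathrm{triv}$). This gives a bijection $\mathcal{P}(q)\to\{\text{orbits in the special piece}\}$, and the set of such orbits has cardinality $2^q=|\overline{A}(\mathcal{O})|$, which matches $\overline{A}(\mathcal{O})\cong(\mathbb{Z}/2\mathbb{Z})^q$ from Proposition \ref{prop:class}. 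I then define the bijection $\mathcal{P}(q)\leftrightarrow\overline{A}(\mathcal{O})$ by transport through Lusztig's parametrization. This is why the statement only asserts a "suitable choice" of bijection. The condition $\emptyset\leftrightarrow\{e\}$ holds because $\mathcal{O}_\emptyset=\mathcal{O}$ corresponds to the identity element.

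The main obstacle is matching Lusztig's pairs $(x,\mathrm{triv}_{Z(x)})$ with trivial local systems on the $\mathcal{O}_I$, as opposed to nontrivial ones. My first approach would be to use the symbol (Lusztig's $\mathrm{Spr}$ via symbols) and verify that the symbols of $\mathrm{Spr}(\mathcal{O}_I,\mathrm{triv})$ are precisely the $2^q$ symbols in the family of the special symbol of $\mathcal{O}$ obtained by swapping the $i$-th pair of entries for $i\in I$. Lusztig's Fourier matrix identifies this family with $\mathcal{M}(\overline{A}(\mathcal{O}))$, and elements $(x,1)$ correspond to swapping a subset of entries. Concretely, the shift $(a''_{2i},a''_{2i-1})\mapsto(a''_{2i}+1,a''_{2i-1}-1)$ exchanges one entry between the two rows of the symbol, which yields the required matching.
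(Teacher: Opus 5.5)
Your proposal is correct and follows the paper's route: the paper obtains this proposition simply by invoking \cite[Theorem 0.4]{L97} together with \cite[Theorem 4.7(c)]{BV85}, and your explicit identification of the special piece of $\mathcal{O}''$ with the $2^q$ orbits $\mathcal{O}_I$ spells out what the paper leaves implicit (cf.\ Remark \ref{rmk-stablesum}). A few slips do not affect the argument: the special piece consists of the orbits in $\overline{\mathcal{O}}$ not contained in the closure of any strictly smaller special orbit, whereas your description of it is circular; the columns of $\mathcal{O}_I$ remain weakly decreasing because all columns of $\mathcal{O}''$ have the same parity, so the strict gaps are at least $2$ (strictness alone would not suffice when two adjacent pairs are both shifted); and in classical types the Weyl-group family only embeds into $\mathcal{M}(\overline{A}(\mathcal{O}))$ rather than being identified with it, although the pairs $(x,1)$ you need do lie in its image.
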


\begin{example}[\cite{W16} Section 3] \label{eg:charform}
    Let $\mathcal{O} = (8,6,4,2,0)$ in $G = \mathrm{Sp}_{20}(\mathbb{C})$. Then 
    \begin{itemize}
        \item $\overline{A}(\mathcal{O}) \cong (\mathbb{Z}/2\mathbb{Z})^2$.
        \item $^\vee \mathcal{O} = (5,3,3,3,3,1,1,1,1)$ and hence ${}^{\vee}h/2 = (4,3,2,1; 2,1,0; 2,1; 0).$
        \item $\mathcal{O}_{\emptyset} = (8,6,4,2,0), \quad \mathcal{O}_{\{2\}} = (9,5,4,2,0), \quad \mathcal{O}_{\{1\}} = (8,6,5,1,0), \quad \mathcal{O}_{\{1,2\}} = (9,5,5,1,0)$.
    \end{itemize}
and hence
$$\sigma_{\emptyset} = 
\begin{pmatrix}
0 &   & 1 &   & 3 &   & 4 &   & 6 \\
  & 1 &   & 2 &   & 4 &   & 5 &  
\end{pmatrix} =
31 \times 42 = j_{W(D_3 \times D_1 \times B_4 \times B_2)}^{W(B_{10})}(\mathrm{sgn}),$$ 
$$\sigma_{\{2\}} = 
\begin{pmatrix}
1 &   & 2 &   & 3 &   & 4 &   & 6 \\
  & 0 &   & 1 &   & 4 &   & 5 &  
\end{pmatrix}= 51 \times 22 = j_{W(D_5 \times D_1 \times B_2 \times B_2)}^{W(B_{10})}(\mathrm{sgn})$$
$$\sigma_{\{1\}} = 
\begin{pmatrix}
0 &   & 1 &   & 4 &   & 5 &   & 6 \\
  & 1 &   & 2 &   & 3 &   & 4 &  
\end{pmatrix} =
33 \times 4  = j_{W(D_3 \times D_3 \times B_4)}^{W(B_{10})}(\mathrm{sgn})$$  
$$\sigma_{\{1,2\}} = 
\begin{pmatrix}
1 &   & 2 &   & 4 &   & 5 &   & 6 \\
  & 0 &   & 1 &   & 3 &   & 4 &  
\end{pmatrix}= 53 \times 2 = j_{W(D_5 \times D_3 \times B_2)}^{W(B_{10})}(\mathrm{sgn}).$$
where $j_{W_0}^{W}(\cdot)$ is the truncated induction (cf. see \cite[Definition 4.11]{BV85} for instance), and the representations of $(\lambda \times \rho) \in W(B_n)^{\wedge}$ are denoted by a pair of partitions $\lambda$ and $\rho$ (labeled by their column sizes) with $|\lambda|+|\rho|= n$. By \cite[Proposition 6.6]{BV85}, one therefore has

\begin{align*}
R_{\emptyset} &= \sum_{w \in W(B_4 \times D_3 \times B_2 \times D_1)} \mathrm{sgn}(w)X\left(\begin{matrix} \quad 4321; &210; &21; &0)\\ w(4321;&210;&21;&0) \end{matrix}\right);\\
R_{\{2\}} &= \sum_{w \in W(D_5 \times B_2 \times B_2 \times D_1)} \mathrm{sgn}(w)X\left(\begin{matrix} \quad 43210;&21;&21;&0\\ w(43210;&21;&21;&0)\end{matrix}\right);\\
R_{\{1\}} &= \sum_{w \in W(B_4 \times D_3 \times D_3)} \mathrm{sgn}(w)X\left(\begin{matrix} \quad 4321;&210;&210\\ w(4321;&210;&210)\end{matrix}\right);\\
R_{\{1,2\}} &= \sum_{w \in W(D_5 \times B_2 \times D_3)} \mathrm{sgn}(w)X\left(\begin{matrix} \quad 43210;&21;&210\\ w(43210;&21;&210)\end{matrix}\right).
\end{align*}    
Note that by Theorem \ref{thm:BV}(c), $\displaystyle R_{\emptyset} = \bigoplus_{\pi \in \overline{A}(\mathcal{O})^{\wedge}} X_{\pi}$ is the sum of all special unipotent representations with multiplicity one.
\end{example}

\begin{remark} \label{rmk-stablesum}
    The nilpotent orbits $\mathcal{O}_I$ appearing in Proposition \ref{prop:barao} are precisely the {\bf special pieces} $\mathrm{SP}(\mathcal{O})$ of the nilpotent orbit $\mathcal{O}$. 
    By Theorem \ref{thm:BV}(a) and Proposition \ref{prop:barao} above, the number of special unipotent representations attached to $\mathcal{O}$ is equal to $|\mathrm{SP}(\mathcal{O})|$. 

    In \cite{BT11}, Barbasch and Trapa studied the stable sums of special unipotent representations attached to a special orbit $\mathcal{O}$ for real reductive groups $G_{\mathbb R}$. In the case when $G_{\mathbb R}$ is a complex group, Propositions 3.2-3.3 of loc. cit. imply that all (stable sums of) special unipotent representations are parametrized by elements of $\mathrm{SP}(\mathcal{O})$. This matches with our discussions in the previous paragraph.
    
\end{remark}

\section{Lifting for Complex Classical Groups} \label{sec:liftclassical}
In this section, we define and study the genuine lift
$$\mathrm{Lift}: \mathcal{G}^{\mathrm{st}}(G) \longrightarrow \mathcal{G}_{\mathrm{gen}}(\widetilde{G'})$$
for $G = \mathrm{Sp}_{2n}(\mathbb{C})$ or $\mathrm{SO}_{2n}(\mathbb{C})$, and $\widetilde{G'} = \mathrm{Spin}_{2n+1}(\mathbb{C})$ or $\mathrm{Spin}_{2n}(\mathbb{C})$ respectively. More explicitly, 
recall the isomorphism $\phi: H \xrightarrow{\cong} H'$ between the Cartan subgroups given in the beginning of Section \ref{sec-lifting}. As for the double cover $\widetilde{H}'$, let $\delta \in X^{\vee}_{\mathbb Q} := X^{\vee} \otimes_{\mathbb{Z}} \mathbb Q$ be a weight of (one of) the spinor module(s) of $\widetilde{G'}$, and let $\widetilde{X}^{\vee} = X^{\vee} + \mathbb{Z}\delta$. Then
$$\widetilde{H}' \cong \mathrm{Hom}_{\mathbb{Z}}(\widetilde{X}^{\vee}, \mathbb{C}^*),$$
and $p: \widetilde{H}' \to H'$ is defined by restricting the above homomorphism to $X^{\vee}$. 

For any $\widetilde{h}' \in \widetilde{H}'$ such that $\phi(h)^2 = p(\widetilde{h}')$, one can check that 
$$\widetilde{h}'(\delta)= \epsilon_{h,\widetilde{h}'} \cdot (\phi(h))(2\delta)$$
for some $\epsilon_{h,\widetilde{h}'} \in \{\pm 1\}$, which is independent of the choice of the spinor weight $\delta$. We define the {\it genuine lift} of $\pi \in \mathcal{G}^{\mathrm{st}}(G)$ by:
\begin{equation} \label{eq:liftclassical}
\mathrm{Lift}(\Theta_{\pi})(\widetilde{h}') := \frac{1}{2^n}\sum_{\phi(h)^2 = p(\widetilde{h}')}  \epsilon_{h,\widetilde{h}} \frac{|D_{G}(h)|}{|D_{\widetilde{G'}}(\widetilde{h}')|}\Theta_{\pi}(h),
\end{equation}
where $D_{G}$ (resp. of $D_{\widetilde{G'}}$) are the Weyl denominators of $G$ (resp. of $\widetilde{G'}$), comparing with Equation \eqref{eq:lift} above.


\medskip
With the above definition of $\mathrm{Lift}$, one can immediately see that the following holds (cf. \cite[Equation (3.10)]{AH97}):
\begin{proposition} \label{prop:over2}
    For principal series representations, the genuine lift is given by
$$\mathrm{Lift}(X(\lambda_L,\lambda_R)) =\begin{cases}
    X(\frac{\lambda_L}{2}, \frac{\lambda_R}{2}) & \text{if\ }\lambda_L - \lambda_R \in (2\mathbb{Z}+1)^n\\
    0 & \text{otherwise}
\end{cases}$$
\end{proposition}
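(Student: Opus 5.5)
The plan is to compute both sides of \eqref{eq:liftclassical} on the regular elements of $\widetilde{H}'$ and match them. On the right side we use the explicit character of $\widetilde{X} := X(\frac{\lambda_L}{2},\frac{\lambda_R}{2})$. First we recall the character of a principal series on the complex group $G$, viewed as a real group. Its Cartan subgroup is $H \cong \mathfrak{h}$-torus with $H = TA$. Identify $H$ with the complex torus $\mathrm{Hom}(X^\vee,\mathbb{C}^*)$ and write an element as $h = \exp(x)$. The Harish-Chandra induction formula gives
$$\Theta_{X(\lambda_L;\lambda_R)}(h) = \frac{\sum_{w\in W} h^{w\lambda_L}\,\overline{h}^{\,w\lambda_R}}{|D_G(h)|^2}\cdot(\text{sign normalisation}),$$
where $h^{\lambda_L}\overline{h}^{\lambda_R}$ denotes the character $e^{\mu}\otimes e^{\nu}$ of $H$. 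This is well defined exactly when $\lambda_L-\lambda_R\in X$. The same formula holds on $\widetilde{H}'$ for $\widetilde{G'}$ with $\widetilde{X}^\vee$ in place of $X$, and the denominator $|D_{\widetilde{G'}}|^2$ is pulled back through $p$. Under $\phi$ and $\phi^W$ the Weyl group sums correspond. In our cases $\phi^\vee$ identifies the coordinates $e_i$ on both sides, so $\lambda/2$ makes sense as a parameter for $\widetilde{G'}$.

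Next we substitute into \eqref{eq:liftclassical}. The factor $|D_G(h)|/|D_{\widetilde{G'}}(\widetilde h')|$ is chosen so that one power of $|D_G(h)|$ cancels. I expect the key identity to be $|D_G(h)|\,|D_{\widetilde{G'}}(\widetilde h')| = |D_{G'}(\phi(h)^2)|$-type relation, which would leave exactly $1/|D_{\widetilde{G'}}(\widetilde h')|^2$. I would check this root by root. Long roots of $G$ correspond to short roots of $G'$ via $\phi^\vee$, and for those the factor is $|e^{\alpha}-e^{-\alpha}| = |e^{\alpha/2}-e^{-\alpha/2}|\,|e^{\alpha/2}+e^{-\alpha/2}|$. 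This has to be combined with the $\epsilon$ sign and the summation. I expect this root-by-root check to be the main technical step. In particular, the long/short swap between $C_n$ and $B_n$ has to produce exactly the denominator of $\mathrm{Spin}_{2n+1}$, so the roots $2e_i$ versus $e_i$ must be handled separately.

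After cancellation, the numerator becomes
$$\frac{1}{2^n}\sum_{w}\sum_{\phi(h)^2=p(\widetilde h')} \epsilon_{h,\widetilde h'}\, h^{w\lambda_L}\overline{h}^{\,w\lambda_R}.$$
The $2^n$ square roots $h$ differ by elements $z$ of the $2$-torsion $H[2]$. Replacing $h$ by $hz$ multiplies the summand by $z^{w(\lambda_L-\lambda_R)}$, and it also changes $\epsilon$ by the character $z\mapsto z(2\delta)$. Since $2\delta=(1,\dots,1)$, this character is $z\mapsto\prod z_i$. The sum over $H[2]\cong(\mathbb{Z}/2)^n$ is therefore a character sum. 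It equals $2^n$ when $w(\lambda_L-\lambda_R)$ has all coordinates odd, which is a $W$-invariant condition, and it vanishes otherwise. This gives the dichotomy, because $\lambda_L-\lambda_R\in(2\mathbb{Z}+1)^n$ is exactly the condition for the character sum to be nonzero.

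In the surviving case we write $h^{w\lambda_L}\overline h^{w\lambda_R} = (h^2)^{w\lambda_L/2}\overline{(h^2)}^{\,w\lambda_R/2}$. The $\epsilon$ factor converts this into the genuine character of $\widetilde h'$ with parameter $(w\lambda_L/2, w\lambda_R/2)$, whose difference lies in $\delta+X^\vee$. What remains is exactly the character formula of $X(\frac{\lambda_L}{2},\frac{\lambda_R}{2})$. Since characters on regular elements determine the virtual module, the formula is proved.
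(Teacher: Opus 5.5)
Your handling of the numerator is correct, and it is essentially what the paper means when it calls the proposition immediate from the definition (as in Adams--Huang's Equation (3.10)). Replacing $h$ by $hz$ with $z\in H[2]$ multiplies the $w$-term by $z^{w(\lambda_L-\lambda_R)}\prod_i z_i$. Orthogonality on $H[2]\cong(\mathbb Z/2\mathbb Z)^n$ then gives the parity dichotomy. Writing $w(\lambda_L-\lambda_R)/2=\delta+\beta$ with $\beta\in X^\vee$ yields $\epsilon_{h,\widetilde h'}\,h^{w(\lambda_L-\lambda_R)}=\widetilde h'(w(\lambda_L-\lambda_R)/2)$, which is the genuine character. (Parameters are transported by $(d\phi)^*$, not $\phi^\vee$, but in coordinates this is harmless.)

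The gap is in the denominator step, which you single out as the main technical one. You take $D_G$ to be the complex Weyl denominator $\prod_{\alpha>0}(e^{\alpha/2}-e^{-\alpha/2})$, so that $\Theta_{X(\lambda_L;\lambda_R)}=N/|D_G|^2$. Then \eqref{eq:liftclassical} leaves $N(h)/(|D_G(h)|\,|D_{\widetilde{G'}}(\widetilde h')|)$, and reaching $1/|D_{\widetilde{G'}}|^2$ would require $|D_G(h)|=|D_{\widetilde{G'}}(\widetilde h')|$. Your displayed relation is not even the right target: $|D_{\widetilde{G'}}(\widetilde h')|$ depends only on $p(\widetilde h')$, so it would force $|D_G(h)|=1$.

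The root-by-root check fails for $n\ge 2$. The roots $2e_i$ of $C_n$ match the short roots $e_i$ of $B_n$ exactly. But for $\alpha=\pm e_i\pm e_j$ the $\widetilde{G'}$-factor at $\widetilde h'$ is
$|e^{\alpha}(h)-e^{-\alpha}(h)|=|e^{\alpha/2}(h)-e^{-\alpha/2}(h)|\,|e^{\alpha/2}(h)+e^{-\alpha/2}(h)|$,
so a factor $\prod|e^{\alpha/2}+e^{-\alpha/2}|(h)$ survives (over all positive roots in type $D$). Worse, this factor changes under $h\mapsto hz$: it exchanges $|1+e^{\alpha}(h)|$ and $|1-e^{\alpha}(h)|$ whenever $e^{\alpha}(z)=-1$. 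So it cannot be pulled out of the sum over square roots, and your orthogonality step breaks down.

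What is missing is the normalization. Since $G$ and $\widetilde{G'}$ are treated as real groups, the Weyl denominators in \eqref{eq:liftclassical} are those for $\mathfrak g_{\mathbb C}=\mathfrak g\oplus\mathfrak g$, i.e. $|D_G(h)|=\prod_{\alpha>0}|e^{\alpha/2}(h)-e^{-\alpha/2}(h)|^2$. This is exactly the denominator in Harish-Chandra's character formula. Equivalently, with complex denominators the transfer factor must be the squared ratio. With this reading, $|D_G(h)|\,\Theta_{X(\lambda_L;\lambda_R)}(h)=\sum_w h^{w\lambda_L}\overline{h}^{\,w\lambda_R}$ holds for every square root $h$. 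The correct denominator $|D_{\widetilde{G'}}(\widetilde h')|$ then appears directly, no comparison of root factors is needed, and your numerator argument finishes the proof.
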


\subsection{Lift of parabolic induction}
\label{s:Lift-PI}
Let $P'$ be a parabolic subgroup of $G'=\mathrm{SO}_{2n+1}(\mathbb{C})$ or $\mathrm{SO}_{2n}(\mathbb{C})$,  whose Levi subgroup is denoted by $L'$.  Then $L'\simeq \prod_{i=1}^{m}\mathrm{GL}_{n_i}(\mathbb{C})\times G_r$, with 
\[G_r=\begin{cases}\mathrm{SO}_{2r+1}(\mathbb{C}) , \ & \text{when}\ G'=\mathrm{SO}_{2n+1}(\mathbb{C}),\\  \mathrm{SO}_{2r}(\mathbb{C}),  \ & \text{when}\ G'=\mathrm{SO}_{2n}(\mathbb{C}).\end{cases}\]
The preimage $p^{-1}(P')$ is a parabolic subgroup of $\widetilde{G'}$ with Levi subgroup 
\[p^{-1}(L')\simeq p^{-1}\left(\mathrm{GL}_{n_1}(\mathbb{C})\right)\cdot \ldots \cdot p^{-1}\left(\mathrm{GL}_{n_m}(\mathbb{C})\right)\cdot p^{-1}(G_r),\]
where the factors $\widetilde{\mathrm{GL}}_{n_i}(\mathbb{C}) =p^{-1}\left(\mathrm{GL}_{n_i}(\mathbb{C})\right)$ are connected double covers of $\mathrm{GL}_{n_i}(\mathbb{C})$, and $p^{-1}(G_r)$ is a complex spin group. These factors all contain the subgroup $p^{-1}(1)\simeq \mathbb{Z}/2\mathbb{Z}$. Moreover, these factors commute with each other since they are connected. 

Let $\pi_i$ be a genuine representation of $\widetilde{\mathrm{GL}}_{n_i}(\mathbb{C})$, and let $\pi_{\sharp}$ be a genuine representation of $p^{-1}(G_r)$. Then $\bigotimes_{i=1}^m\pi_i\otimes \pi_{\sharp}$ is a well-defined genuine representation of $p^{-1}(L')$ and, analogous to the notations in Theorem \ref{thm:reduce}, denote the parabolically induced module by:
\[\prod_{i=1}^m\pi_i\rtimes \pi_{\sharp} :=\mathrm{Ind}_{p^{-1}(P')}^{\widetilde{G'}}\left(\bigotimes_{i=1}^m\pi_i\otimes \pi_{\sharp}\otimes {\bf 1}\right).\] 

\begin{proposition}\label{parab_ind}
The lift preserves parabolic induction, that is, $\mathrm{Lift}(\pi_1\times \pi_2)\simeq \mathrm{Lift}(\pi_1)\times \mathrm{Lift}(\pi_2)$. 
\end{proposition}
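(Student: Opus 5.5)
Since both sides are additive and $\mathrm{Lift}$ is defined on Grothendieck groups, I will reduce the statement to principal series and then apply Proposition~\ref{prop:over2}. Here $\pi_1$ is a representation of the $\mathrm{GL}$-part of a Levi $L = \prod_i \mathrm{GL}_{n_i}(\mathbb{C}) \times G_r$ of $G$, and $\pi_2$ is a representation of $G_r$. The lift on the $\mathrm{GL}$ factors is the genuine $\mathrm{GL}$ lift of \cite[Section 3]{AH97}, and on $G_r$ it is the lift \eqref{eq:liftclassical} of smaller rank. Under $\phi$, the Levi $L$ corresponds to the Levi $L'$ of $G'$ from Section~\ref{s:Lift-PI}: the $\mathrm{GL}$ blocks match, and $\mathrm{Sp}_{2r}$ or $\mathrm{SO}_{2r}$ matches $\mathrm{SO}_{2r+1}$ or $\mathrm{SO}_{2r}$. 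By Proposition~\ref{prop:BVprop} and the Langlands classification, the Grothendieck groups of $\mathrm{GL}_{n_i}(\mathbb{C})$ and of $G_r$ are spanned by principal series $X(\lambda_L;\lambda_R)$. So it suffices to treat $\pi_1 = X(\lambda^1_L;\lambda^1_R)$ and $\pi_2 = X(\lambda^2_L;\lambda^2_R)$.

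For such $\pi_1,\pi_2$, induction in stages gives $\pi_1 \times \pi_2 = X(\lambda^1_L,\lambda^2_L;\lambda^1_R,\lambda^2_R)$ as a principal series of $G$, with concatenated parameters. By Proposition~\ref{prop:over2}, its lift is nonzero exactly when every coordinate of $\lambda_L-\lambda_R$ is an odd integer, and in that case it equals $X(\lambda_L/2;\lambda_R/2)$. The same criterion applies factor by factor on the right-hand side, since the oddness condition on the full parameter is the conjunction of the conditions on the blocks. Proposition~\ref{prop:over2} applied to each factor, together with its $\mathrm{GL}$ analogue \cite[Equation (3.10)]{AH97}, gives the factor lifts $X(\lambda^1_L/2;\lambda^1_R/2)$ and $X(\lambda^2_L/2;\lambda^2_R/2)$, and these are genuine precisely because the coordinates of $\mu/2$ are half-integers. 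Inducing these through $p^{-1}(P')$ and again using induction in stages, now on $\widetilde{G'}$ with the Borel $p^{-1}(B') \subseteq p^{-1}(P')$, yields $X(\lambda_L/2;\lambda_R/2)$ for $\widetilde{G'}$. Hence the two sides agree.

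The main obstacle is checking that the normalizations are compatible. Two things must be verified. First, the $\mathrm{GL}$ lift and the lift for $G_r$, each with its own constant $1/2^{n_i}$ or $1/2^r$, sign $\epsilon$ and ratio of Weyl denominators, must multiply to the data in \eqref{eq:liftclassical} for $G$. Second, the genuine character $\bigotimes\pi_i\otimes\pi_\sharp$ of $p^{-1}(L')$ must be well defined; this uses the fact that all the factors share the same $p^{-1}(1)$ and commute, as noted in Section~\ref{s:Lift-PI}.

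I would settle this by comparing characters directly. I would write the induced character via the standard formula on $H$ as a $W$-sum divided by $|D_L|$ times $|D_G|/|D_L|$, and observe three facts. The ratio $|D_G(h)|/|D_{\widetilde{G'}}(\widetilde h')|$ factors as the Levi ratio times the corresponding ratio for the unipotent radicals. The sign $\epsilon_{h,\widetilde h'}$ is multiplicative over the blocks, since a spinor weight $\delta$ restricts to a sum of block spinor weights. The preimages of $p(\widetilde h')$ under squaring, which number $2^n$, split as a product over the blocks. Once this is checked, the principal series case above is exactly what Proposition~\ref{prop:over2} encodes, and the isomorphism holds in the Grothendieck group.
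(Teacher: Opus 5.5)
Your proposal is correct and is essentially the paper's argument, which just invokes the character formula for induced representations together with Proposition~\ref{prop:over2} (and Tadi\'c's result for the $\mathrm{GL}$ factors). Your reduction to principal series via induction in stages, and your check that the constants $2^{-n_i}$, $2^{-r}$, the signs $\epsilon$, the square roots and the Weyl denominators multiply correctly, spell out details the paper leaves implicit.
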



\begin{proof}
It follows from the character formula of induced representations, and Proposition \ref{prop:over2} above. The case of $G = \mathrm{GL}_{n}(\mathbb{C})$ is as given in \cite[Theorem 4.5]{T96}.
\end{proof}

By Theorem \ref{thm:reduce} and Proposition \ref{parab_ind}, it suffices to study the lift of trivial representations of $\mathrm{GL}_{n}(\mathbb{C})$ and that of the special unipotent representations attached to $\mathcal{O} = \mathcal{O}''$. The former is outlined in \cite{T96}, which we will recall in Section \ref{sec:gln} below (note that Tadi\'c only dealt with the non-genuine lift, so our results will be slightly different from his); As for the latter, we have already seen in Remark \ref{rmk-bmszdual} that these are precisely orbits whose BVLS dual ${}^{\vee}\mathcal O$ is quasi-distinguished. We will give an outline of the lift of representations in $\mathcal{U}(\mathcal{O})$ in Section \ref{sec:liftunip}.

\subsection{Lift of $\mathrm{triv}$ in $\mathrm{GL}_{n}(\mathbb{C})$}
\label{sec:gln}
In this section, we set $G=G' = \mathrm{GL}_{n}(\mathbb{C})$, and set $\widetilde{G'}$ to be the 
connected double
cover of $\mathrm{GL}_n(\mathbb C)$ appearing in Section \ref{s:Lift-PI}. Equivalently, it
may be realized as
$$
\widetilde{\mathrm{GL}}_n(\mathbb C)
=
\left\{(g,z)\in \mathrm{GL}_n(\mathbb C)\times\mathbb C^\times
\mid z^2=\det(g)\right\},
$$
with covering map $p(g,z)=g$.

For $a\in\frac12\mathbb Z$ and $s\in\mathbb C$,  define the following character of $\widetilde{G'}$: 
$$
\left(\frac{\det}{|\det|}\right)^a |\det|^s(g,z)
:=
\left(\frac{z}{|z|}\right)^{2a}|\det(g)|^s.
$$
This character descends to $\mathrm{GL}_n(\mathbb C)$ when $a\in\mathbb Z$,
and is genuine when $a\in\frac12+\mathbb Z$.

\begin{proposition}\label{lift_1_gln}
Let $G = \mathrm{GL}_{n}(\mathbb{C})$. Up to $\pm 1$, the lift of the trivial representation is:

  \[   \mathrm{Lift}(\mathrm{triv}) = \begin{cases} \displaystyle \left(\frac{\det}{|\det|}\right)^{1/2}_{\widetilde{\mathrm{GL}}_m}\times \left(\frac{\det}{|\det|}\right)^{-1/2}_{\widetilde{\mathrm{GL}}_m} &  n=2m;\\
   0, & n=2m+1.\end{cases}\]
\end{proposition}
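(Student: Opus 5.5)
Write the trivial representation of $\mathrm{GL}_n(\mathbb C)$ as an alternating sum of principal series, as in Theorem \ref{thm:BV}. The trivial representation is the special unipotent representation attached to the zero orbit. Its infinitesimal character is $(\rho;\rho)$ with $\rho=(\frac{n-1}{2},\frac{n-3}{2},\dots,-\frac{n-1}{2})$, and the Weyl character formula (Zhelobenko) gives
$$\mathrm{triv}=\sum_{w\in S_n}\mathrm{sgn}(w)\,X(\rho;w\rho).$$
Apply Proposition \ref{prop:over2}, in its $\mathrm{GL}_n$ version from \cite{AH97}: a term survives only if every coordinate of $\rho-w\rho$ is odd, and then it lifts to $X(\rho/2;w\rho/2)$. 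The coordinates of $\rho$ are $\rho_i=\frac{n+1}{2}-i$, so $\rho_i-\rho_{w^{-1}(i)}=w^{-1}(i)-i$. The surviving $w$ are therefore exactly the permutations that send every index to an index of opposite parity. For $n=2m+1$ there are $m+1$ odd indices and only $m$ even ones, so no such $w$ exists and $\mathrm{Lift}(\mathrm{triv})=0$.

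For $n=2m$, the surviving $w$ are $w=\tau\circ(\alpha,\beta)$, where $\tau$ is a fixed permutation swapping the odd and even positions and $(\alpha,\beta)\in S_m\times S_m$ permutes the odd and even positions separately. Hence
$$\mathrm{Lift}(\mathrm{triv})=\pm\sum_{\alpha,\beta\in S_m}\mathrm{sgn}(\alpha)\mathrm{sgn}(\beta)\,X(\rho/2;\tau(\alpha,\beta)\rho/2),$$
with the global sign $\mathrm{sgn}(\tau)$; this is the ``up to $\pm1$''. Regroup the parameter by Proposition \ref{prop:BVprop}, i.e.\ simultaneously permute left and right coordinates, so that the odd-position coordinates of $\rho/2$ form one block and the even-position ones another. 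The odd-position entries of $\rho/2$ are $\frac{2m-1}{4},\frac{2m-5}{4},\dots$, which equal $\rho_{m}/2+\frac14$ with $\rho_m$ the $\rho$ of $\mathrm{GL}_m$. The even-position entries are $\rho_m/2-\frac14$. After $\tau$, the right parameter pairs the left block $\rho_m/2+\frac14$ with the right block $\rho_m/2-\frac14$ and vice versa.

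By induction in stages, the sum then becomes the parabolic induction from the Levi $\widetilde{\mathrm{GL}}_m\cdot\widetilde{\mathrm{GL}}_m$ of two virtual characters, each of the form
$$\sum_{\alpha\in S_m}\mathrm{sgn}(\alpha)\,X\Bigl(\tfrac{\rho_m}{2}\pm\tfrac14;\ \alpha\tfrac{\rho_m}{2}\mp\tfrac14\Bigr).$$
Such a sum is an alternating principal series sum for a one-dimensional character. In a block, $\lambda_L-\lambda_R=\pm\frac12(1,\dots,1)$ up to the $\alpha$-twist, which reflects the genuine unitary part $(\det/|\det|)^{\pm1/2}$. The remaining $\rho_m/2$ parts must assemble into the character, and this is the main obstacle: $\sum_\alpha\mathrm{sgn}(\alpha)X(\rho_m/2;\alpha\rho_m/2)$ is \emph{not} the Weyl-formula sum for a character, whose parameter would be $(\rho_m;\rho_m)$. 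Halving has to be reconciled with a change in the grouping.

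The first thing to try is to check the precise combinatorics of which $(\lambda_L,\lambda_R)$ pairs appear in the original trivial sum. The two blocks, with parameters $\rho_m/2\pm\frac14$ on the left and the right, should really be redistributed as $(\rho_m/2+\frac14,\rho_m/2-\frac14)$ against the left and right $\rho$-strings of the character $(\det/|\det|)^{1/2}$ of $\widetilde{\mathrm{GL}}_m$, whose Zhelobenko parameter is $(\rho_m/2+\frac14;-\rho_m/2-\frac14)$ up to sign conventions. Equivalently, I would verify the identity directly at the level of characters. Evaluate the right-hand side, the induced character of $\chi^{1/2}\otimes\chi^{-1/2}$ computed via the Weyl formula for the one-dimensional characters on $\widetilde{\mathrm{GL}}_m$, on the regular torus elements $\widetilde h'$. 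Compare it with \eqref{eq:liftclassical}, which sums over the $2^n$ square roots $h$ of $p(\widetilde h')$: $\Theta_{\mathrm{triv}}=1$ and the ratio of Weyl denominators $|D(h)|/|D(\widetilde h')|$ combine into an explicit alternating sum. This direct torus computation gives the sign and confirms the identification, modulo matching sign conventions for $\epsilon$.
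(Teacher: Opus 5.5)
Your first half is the paper's argument. You expand $\mathrm{triv}=\sum_{w}\mathrm{sgn}(w)X(\rho;w\rho)$ and keep only the $w$ with $\rho-w\rho\in(2\mathbb Z+1)^n$, i.e.\ those exchanging odd and even positions. This kills everything for $n$ odd and leaves $w=\tau\circ(\alpha,\beta)$ for $n=2m$.

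The gap is the final identification, and the ``main obstacle'' you describe comes from an arithmetic slip. The odd-position entries $\frac{2m-1}{4},\frac{2m-5}{4},\dots$ of $\rho/2$ differ by $1$, not $\frac12$, and average to $\frac14$. So they form the string $\rho_m+\frac14$, not $\rho_m/2+\frac14$; the even-position entries form $\rho_m-\frac14$. After regrouping with Proposition \ref{prop:BVprop}, the surviving terms are $\pm\,\mathrm{sgn}(\alpha)\mathrm{sgn}(\beta)\,X\bigl(\rho_m+\frac14,\rho_m-\frac14;\ \alpha(\rho_m-\frac14),\beta(\rho_m+\frac14)\bigr)$.

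Compare this with the genuine character $(\det/|\det|)^{\pm1/2}$ of $\widetilde{\mathrm{GL}}_m$. On the torus it is $\prod_i z_i^{\pm1/4}\bar z_i^{\mp1/4}$, i.e.\ $\mathrm{triv}$ twisted by a shift along $(1,\dots,1)$. So its Zhelobenko parameter is $(\rho_m\pm\frac14;\rho_m\mp\frac14)$, with $\lambda_L-\lambda_R=\pm\frac12(1,\dots,1)$. Since that shift commutes with $S_m$, its character formula is $\sum_{\alpha\in S_m}\mathrm{sgn}(\alpha)X(\rho_m\pm\frac14;\alpha(\rho_m\mp\frac14))$. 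Each block is therefore literally the character formula of $(\det/|\det|)^{\pm1/2}$. Induction in stages then gives the claim with global sign $\mathrm{sgn}(\tau)$, which is exactly what the paper's final equality records. There is no halving to ``reconcile''.

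As written, though, your proposal never reaches this conclusion. Your first fallback assigns $(\det/|\det|)^{1/2}$ the parameter $(\rho_m/2+\frac14;-\rho_m/2-\frac14)$. For $m\geq 2$ this is wrong under any sign convention: a one-dimensional representation of $\widetilde{\mathrm{GL}}_m$ has $\lambda_L$ and $\lambda_R$ each a string of step $1$, while $\rho_m/2$ has step $\frac12$. Your second fallback, a direct comparison of characters on the torus, is only announced; the Weyl-denominator bookkeeping and the signs $\epsilon$ are left open. So the proof is incomplete exactly at the step that identifies the lift. Correcting the spacing of the two blocks closes it along the paper's lines.
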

\begin{proof}
By Weyl character formula, 
\[\mathrm{triv}=\sum_{w\in W}\mathrm{sgn}(w)X(\rho,w\rho),\] 
as virtual representations, where $\rho$ is the half sum of the positive roots. Hence, 
$$\mathrm{Lift}(\mathrm{triv})=\sum_{w\in W}\mathrm{sgn}(w) \mathrm{Lift}(X(\rho,w\rho)).$$ 

When $n=2m$, 
$\rho = \left(\frac{2m-1}{2},\ \frac{2m-1}{2}-1,\ \dots, \frac{2m-1}{2} - (2m-2), \frac{2m-1}{2}-(2m-1)\right)$, and it is easy to see that
$\rho-w\rho \in (2\mathbb{Z}+1)^n$ if and only if 
{\scriptsize $$w\rho = \left(w_1\left(\frac{2m-1}{2}, \frac{2m-1}{2}-2,\dots, \frac{2m-1}{2}-(2m-2)\right); \\
w_2\left(\frac{2m-1}{2}-1, \frac{2m-1}{2}-3,\dots, \frac{2m-1}{2}-(2m-1)\right) \right)$$}
for some $w_1,w_2\in S_m$. As a result, 
\[\begin{aligned}\mathrm{Lift}(\mathrm{triv}) \ &=\sum_{w_1,w_2\in S_m} \mathrm{sgn}((w_1,w_2) \cdot y)X\left(\frac{\rho}{2},\ ((w_1,w_2) \cdot y)\frac{\rho}{2}\right)\\
& \ = \det(y) \left(\frac{\det}{|\det|}\right)^{1/2}_{\widetilde{\mathrm{GL}}_m}\times \left(\frac{\det}{|\det|}\right)^{-1/2}_{\widetilde{\mathrm{GL}}_m}, \end{aligned}\]
where $y \in S_n = S_{2m}$ is given by $y(0\ 1\ \dots\ 2m-1) := (0\ 2\ \dots\ 2m-2; 1\ 3\ \dots\ 2m-1)$ with $\mathrm{sgn}(y) = (-1)^{\lceil m/2 \rceil}$.

When $n=2m+1$, a simple parity argument implies that $\rho-w\rho \notin (2\mathbb{Z}+1)^n$ for any $w\in S_n$. Therefore, the trivial representation cannot be lifted to a genuine representation.  
\end{proof}

\begin{theorem}
    Let $\pi$ be an irreducible unitary representation of $\mathrm{GL}_{n}(\mathbb{C})$. Then (up to $\pm$) $\mathrm{Lift}(\pi)$ is either zero, or an irreducible unitary representation of $\widetilde{\mathrm{GL}}_{n}(\mathbb{C})$.
\end{theorem}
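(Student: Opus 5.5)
Our plan is to use Vogan's classification of the unitary dual of $\mathrm{GL}_n(\mathbb{C})$: every irreducible unitary $\pi$ is an irreducible, fully induced product
$$\pi=\prod_i \chi_i\circ\det\!\big|_{\mathrm{GL}_{a_i}} \times \prod_j \big(\chi_j\circ\det|\det|^{s_j}\times\chi_j\circ\det|\det|^{-s_j}\big)\big|_{\mathrm{GL}_{b_j}\times \mathrm{GL}_{b_j}},\qquad 0<s_j<\tfrac12,$$
with unitary characters $\chi_i,\chi_j$ of $\mathbb{C}^\times$. In other words, $\pi$ is built from one‑dimensional unitary characters and Stein complementary series by normalized induction. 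By Proposition \ref{parab_ind}, which applies verbatim to $\mathrm{GL}_n$ and uses Levi subgroups $\prod \mathrm{GL}_{n_i}$ (cf. \cite[Theorem 4.5]{T96}), $\mathrm{Lift}(\pi)$ is the corresponding induced product of the lifts of the building blocks. So we need to compute the lift of a character $\chi\circ\det$ of $\mathrm{GL}_a(\mathbb{C})$, and then show that the resulting induced representation of $\widetilde{\mathrm{GL}}_n(\mathbb{C})$ is irreducible and unitary.

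\emph{Step 1: characters.} Write $\chi(z)=(z/|z|)^{k}|z|^{it}$, so that $\chi\circ\det\otimes\mathrm{triv}$ has the Weyl-type expansion $\sum_w \mathrm{sgn}(w)X(\rho+c,w\rho+c')$, where $c-c'=k(1,\dots,1)$ and $c+c'=it(1,\dots,1)$. We repeat the parity argument in the proof of Proposition \ref{lift_1_gln}. The vector $\rho-w\rho$ has integer entries, and $\rho-w\rho+k(1,\dots,1)\in(2\mathbb{Z}+1)^a$ holds exactly when:
\begin{itemize}
\item for $k$ odd, $w$ preserves the parity classes of the entries of $\rho$; this gives $(\det/|\det|)^{k/2}|\det|^{it/2}$, a genuine unitary character of $\widetilde{\mathrm{GL}}_a$, with no condition on $a$;
\item for $k$ even, $w$ swaps the two parity classes; this forces $a=2m$ and gives $(\det/|\det|)^{(k+1)/2}\times(\det/|\det|)^{(k-1)/2}$ (each twisted by $|\det|^{it/2}$), induced from $\widetilde{\mathrm{GL}}_m\times\widetilde{\mathrm{GL}}_m$. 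For odd $a$ the lift is $0$.
\end{itemize}
In the same way, a complementary series block lifts, again by Proposition \ref{parab_ind}, to products of these pieces with the exponents $\pm s_j$ halved.

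\emph{Step 2: unitarity and irreducibility.} If some block has odd size and even $k$, then $\mathrm{Lift}(\pi)=0$. Otherwise $\mathrm{Lift}(\pi)$ is, up to the sign $\pm\det(y)$ from Step 1, a normalized induction of genuine unitary characters and of complementary‑series‑type pieces $\chi|\det|^{s/2}\times\chi|\det|^{-s/2}$ with $0<s/2<1/4$. Unitarity of a complementary series piece follows by the standard deformation from $s=0$ (unitary induction), as long as no reducibility occurs on the way. Irreducibility of the whole induced representation we would check through Zhelobenko parameters (Proposition \ref{prop:BVprop}). A genuine principal series of $\widetilde{\mathrm{GL}}_n$ can be treated as one for $\mathrm{GL}_n$ with half‑integral $\mu$, and reducibility of $X(\lambda_L,\lambda_R)$ requires a root $\alpha$ with both $\langle\lambda_L,\alpha\rangle$ and $\langle\lambda_R,\alpha\rangle$ nonzero integers of the same sign. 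In our case the halved parameters $\lambda_L/2,\lambda_R/2$ coming from different blocks differ by $\tfrac12$-shifts in $\mu$ (since $k/2$ is half‑integral for genuine pieces) or by real parts smaller than $\tfrac12$. This pins down which pairs of blocks could interact, and we would have to show the integrality conditions fail for them.

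The main obstacle is Step 2: proving irreducibility of products of the lifted blocks in the genuine setting. In particular, in the even‑$k$ case each block splits into two characters with $\mu$ differing by $1$, and these can interact with other blocks. What we would try first is to transport the problem to a linear group: tensor with a genuine character to identify the genuine principal series of $\widetilde{\mathrm{GL}}_n$ with principal series of $\mathrm{GL}_n$ with $\mu\in(\tfrac12+\mathbb{Z})^n$. We would then apply Vogan's/Tadi\'c's irreducibility criterion for unitary parabolic induction, together with the fact that complementary series exponents below $\tfrac12$ never produce reducibility. That would show the lift is an irreducible unitary representation of $\widetilde{\mathrm{GL}}_n(\mathbb{C})$.
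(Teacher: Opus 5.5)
Your strategy is the paper's: Vogan's classification and Proposition~\ref{parab_ind} reduce the problem to unitary characters and Stein complementary series, the parity count of Proposition~\ref{lift_1_gln} computes the lifts of characters, and \cite{V86} gives irreducibility and unitarity of the resulting product. However, your Step~1 is wrong for odd $k$.

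The $w$ with $\rho-w\rho\in(2\mathbb{Z})^a$ form only the subgroup $S_{\lceil a/2\rceil}\times S_{\lfloor a/2\rfloor}$, which permutes within the two classes of entries of $\rho$ that are congruent modulo $2$. Moreover $\rho/2$ has step $\tfrac12$, so it splits into two strings of step $1$: they are centred at $\pm\tfrac14$ if $a=2m$, and both at $0$ if $a=2m+1$. Let $\chi'$ be the genuine unitary character you wrote down. Then the lift of $\chi\circ\det$ is $\chi'|\det|^{1/4}\times\chi'|\det|^{-1/4}$ induced from $\widetilde{\mathrm{GL}}_m\times\widetilde{\mathrm{GL}}_m$, resp.\ $\chi'\times\chi'$ induced from $\widetilde{\mathrm{GL}}_{m+1}\times\widetilde{\mathrm{GL}}_m$. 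Here exponents are in the normalization where the Stein range is $0<s<\tfrac12$. In particular the lift is never one-dimensional once $a\geq 2$. Already for $a=2$, $k=1$, only $w=e$ survives, and the lift is the irreducible principal series $X\big((\tfrac12,0);(0,-\tfrac12)\big)$; its $\nu=(\tfrac12,-\tfrac12)$ is not that of a character.

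This error propagates into Step~2. For odd $k$ and even $b$, the block $\chi|\det|^{s}\times\chi|\det|^{-s}$ lifts to $\chi'$ twisted by $|\det|^{\pm s/2\pm 1/4}$. Writing $\sigma(\chi',x):=\chi'|\det|^{x}\times\chi'|\det|^{-x}$, this is $\sigma(\chi',\tfrac14+\tfrac s2)\times\sigma(\chi',\tfrac14-\tfrac s2)$. So your list of lifted pieces, with exponents confined to $(0,\tfrac14)$, is incomplete. (The paper's own one-line treatment, which says the lift vanishes for odd $k$, is also inaccurate. The vanishing condition is the one you state: $k$ even and block size odd.)

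With this corrected, your ``main obstacle'' disappears and the proof closes exactly as in the paper. After regrouping factors (induction is commutative in the Grothendieck group), every nonzero lift is a product of two kinds of pieces: genuine unitary characters of various $\widetilde{\mathrm{GL}}_{m}$'s, and $\sigma(\chi',x)$ with $x\in\{\tfrac14,\tfrac s2,\tfrac14\pm\tfrac s2\}\subset(0,\tfrac12)$. The one thing to check is $\tfrac14+\tfrac s2<\tfrac12$, which holds since $s<\tfrac12$. Tensoring with the genuine character $(\det/|\det|)^{-1/2}$ identifies genuine representations of $\widetilde{\mathrm{GL}}_n(\mathbb{C})$ with representations of $\mathrm{GL}_n(\mathbb{C})$, compatibly with parabolic induction and unitarity. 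Vogan's classification then says that \emph{every} product of unitary characters and Stein complementary series is irreducible and unitary, however the characters in different blocks are related. So you need neither the Zhelobenko-parameter analysis of interacting blocks nor the deformation argument. What you need is the correct list of lifted pieces and the bound on their exponents.
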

\begin{proof}
By the classification of unitary dual of $\mathrm{GL}_{n}(\mathbb{C})$ and Proposition \ref{parab_ind}, one can reduce the proof to the case when $\pi$ is a unitary character, or a Stein complementary series.  
When $\pi = (\det/|\det|)_{\mathrm{GL}_n}^k$ is a unitary character with $k \in \mathbb{Z}$, then by Proposition \ref{lift_1_gln}, $\mathrm{Lift}(\pi)$ is zero if $k$ is odd, or a product of two unitary characters if $k$ is even. Therefore, it is unitary and irreducible. 

When $\pi = (|\det|_{\mathrm{GL}_{n/2}}^{\nu/2} \times |\det|_{\mathrm{GL}_{n/2}}^{-\nu/2}) \otimes (\det/|\det|)_{\mathrm{GL}_n}^k$ $(0 < \nu < 1)$ is a Stein complementary series (so that $n = 2m$ is even), then by Proposition \ref{parab_ind} and Proposition \ref{lift_1_gln} again, $\mathrm{Lift}(\pi)$ is either zero (which is always the case when $m$ is odd), or equal to $\widetilde{\pi}_+ \times \widetilde{\pi}_-$, where $\widetilde{\pi}_{\pm}$ is a Stein complementary series in $\mathrm{GL}_{m}(\mathbb{C})$ twisted by a genuine character $(\frac{\det}{|\det|})^{\pm 1/2}$ in $\widetilde{\mathrm{GL}}_m$. By the classification of the unitary dual of $\mathrm{GL}_n$ in \cite{V86}, this is irreducible and the theorem follows as a consequence.
\end{proof}

\subsection{Lift of Special Unipotent Representations for Classical Groups} \label{sec:liftunip}
We now move on to studying the special unipotent representations $\mathcal{U}(\mathcal{O})$ attached to nilpotent orbits $\mathcal{O} = \mathcal{O}''$ whose dual orbit ${}^{\vee}\mathcal{O}$ is quasi-distinguished when $G = \mathrm{Sp}_{2n}(\mathbb{C})$ or $\mathrm{SO}_{2n}(\mathbb{C})$. In view of the character formula in Theorem \ref{thm:BV}(c), we begin by understanding the lift of the sum of all special unipotent representations using the notations in Proposition \ref{prop:barao}:
$$R_{\{e\}} = R_{\emptyset} = \bigoplus_{\pi \in \overline{A}(\mathcal{O})^{\wedge}} X_{\pi}.$$  

\medskip
We obtain the following lifting formulas for $R_\emptyset$ in the following proposition. 
\begin{proposition} \label{prop:lift}
    Let $G = \mathrm{Sp}_{2n}(\mathbb{C})$ or $G = \mathrm{SO}_{2n}(\mathbb{C})$, and 
    $$\mathcal{O} = \begin{cases} (a_{2q}'' \geq a_{2q-1}'' > \dots > a_2'' \geq a_1'' > a_0'')   &\text{if }  G = \mathrm{Sp}_{2n}(\mathbb{C}) \\
(a_{2q+1}'' > a_{2q}'' \geq a_{2q-1}'' > \dots > a_2'' \geq a_1'' > a_0'') &\text{if }  G = \mathrm{SO}_{2n}(\mathbb{C}) \end{cases}$$ 
    be a special nilpotent orbit as given in Proposition \ref{prop:class}. Then the lift of the sum of special unipotent representations in $\mathcal{U}(\mathcal{O})$ to $\widetilde{G'} = \mathrm{Spin}_{2n+1}(\mathbb{C})$ or $\mathrm{Spin}_{2n}(\mathbb{C})$ is nonzero if and only if $a_i'' \in 4\mathbb{N}$ for all $i$. 

    In such cases, write
    \begin{equation} \label{eq:4col}
    \mathcal{O} := \begin{cases} (4\alpha_{2q} \geq 4\alpha_{2q-1} > \dots > 4\alpha_2 \geq 4\alpha_1  > \alpha_0) &\mathrm{if}\   G = \mathrm{Sp}_{2n}(\mathbb{C}) \\
(4\alpha_{2q+1} > 4\alpha_{2q} \geq 4\alpha_{2q-1} > \dots > 4\alpha_2 \geq 4\alpha_1 > \alpha_0) &\mathrm{if}\  G = \mathrm{SO}_{2n}(\mathbb{C}) \end{cases}    
    \end{equation}
and 
\begin{equation} \label{eq:sigmaomega}
\begin{aligned}
\mathfrak{B}_{i}^+ &:= \begin{pmatrix}  \alpha_{2i}& \dots & 2 & 1\end{pmatrix}, \quad  &&\mathfrak{B}_{i}^-:= \begin{pmatrix} \alpha_{2i}-\frac{1}{2}& \dots& \frac{3}{2} & \frac{1}{2}\end{pmatrix} \\
\mathfrak{D}_{j}^+ &:= \begin{pmatrix} \alpha_{2j-1}-\frac{1}{2}& \dots& \frac{3}{2} & \frac{1}{2}\end{pmatrix}, \quad  &&\mathfrak{D}_{j}^- := \begin{pmatrix} \alpha_{2j-1}-1& \dots& 1 & 0\end{pmatrix}    
\end{aligned}
\end{equation}
Then, up to $\pm 1$, the character formula of the lift is equal to:

\bigskip
 \noindent $\bullet\ G = \mathrm{Sp}_{2n}(\mathbb{C})$:
        {\small\begin{align*}
        \mathrm{Lift} (R_{\emptyset}) = 2^q \sum_{\sigma_i, \omega_j}&\left(\mathrm{sgn}(\prod_{i}\sigma_i)\mathrm{sgn}(\prod_{j}\omega_j)\right) \\
       &\displaystyle   X\left(\begin{matrix}\quad \mathfrak{B}_{q}^+ \\ \sigma_{q}(\mathfrak{B}_{q}^-\end{matrix}  ;\ \begin{matrix} \mathfrak{B}_{q}^- \\ \mathfrak{B}_{q}^+)\end{matrix}\ ;\ \begin{matrix} \quad  \mathfrak{D}_{q}^+ \\ \omega_{q}(\mathfrak{D}_{q}^-\end{matrix}   ;  \begin{matrix} \mathfrak{D}_{q}^- \\\ \mathfrak{D}_{q}^+)\end{matrix} \ ; \cdots ;\  \begin{matrix}\quad \mathfrak{B}_{1}^+ \\ \sigma_{1}(\mathfrak{B}_{q}^-\end{matrix}  ;\ \begin{matrix} \mathfrak{B}_{1}^- \\ \mathfrak{B}_{1}^+)\end{matrix}\ ;\ \begin{matrix} \quad  \mathfrak{D}_{1}^+ \\ \omega_{1}(\mathfrak{D}_{1}^-\end{matrix}   ;  \begin{matrix} \mathfrak{D}_{1}^- \\\ \mathfrak{D}_{1}^+)\end{matrix} \ ;\begin{matrix}\quad \mathfrak{B}_{0}^+ \\ \sigma_{0}(\mathfrak{B}_{0}^-\end{matrix}  ; \ \begin{matrix} \mathfrak{B}_{0}^- \\ \mathfrak{B}_{0}^+)\end{matrix}\  \right);
        \end{align*}}
 \noindent $\bullet\ G = \mathrm{SO}_{2n}(\mathbb{C})$:
       {\small 
       \begin{align*}
           \mathrm{Lift}&(R_{\emptyset}) =\\ 
           &\displaystyle 2^q \sum_{\sigma_i, \omega_j} \left(\mathrm{sgn}(\prod_{i}\sigma_i)\mathrm{sgn}(\prod_{j}\omega_j)\right)  X\left(\begin{matrix} \quad \quad \mathfrak{D}_{q+1}^+ \\ \omega_{q+1}(\mathfrak{D}_{q+1}^-\end{matrix}  \ ; \ \begin{matrix} \mathfrak{D}_{q+1}^- \\\ \mathfrak{D}_{q+1}^+)\end{matrix} \ ; \ \begin{matrix}\quad \mathfrak{B}_{q}^+ \\ \sigma_{q}(\mathfrak{B}_{q}^-\end{matrix}  \ ; \ \begin{matrix} \mathfrak{B}_{q}^- \\ \mathfrak{B}_{q}^+) \end{matrix}\ ;\cdots ;\begin{matrix}\quad \mathfrak{D}_{1}^+ \\ \omega_{1}(\mathfrak{D}_{1}^-\end{matrix}  ; \ \begin{matrix} \mathfrak{D}_{1}^- \\ \mathfrak{D}_{1}^+)\end{matrix} ; \begin{matrix}\quad \mathfrak{B}_{0}^+ \\ \sigma_{0}(\mathfrak{B}_{0}^-\end{matrix}  ; \ \begin{matrix} \mathfrak{B}_{0}^- \\ \mathfrak{B}_{0}^+)\end{matrix}\  \right) \\
           +\ &\displaystyle 2^q \sum_{\sigma_i, \omega_j} \left(\mathrm{sgn}(\prod_{i}\sigma_i)\mathrm{sgn}(\prod_{j}\omega_j)\right)  \widehat{X}\left(\begin{matrix} \quad \quad \mathfrak{D}_{q+1}^+ \\ \omega_{q+1}(\mathfrak{D}_{q+1}^-\end{matrix}  \ ; \ \begin{matrix} \mathfrak{D}_{q+1}^- \\\ \mathfrak{D}_{q+1}^+)\end{matrix} \ ; \ \begin{matrix}\quad \mathfrak{B}_{q}^+ \\ \sigma_{q}(\mathfrak{B}_{q}^-\end{matrix}  \ ; \ \begin{matrix} \mathfrak{B}_{q}^- \\ \mathfrak{B}_{q}^+) \end{matrix}\ ;\cdots ;\begin{matrix}\quad \mathfrak{D}_{1}^+ \\ \omega_{1}(\mathfrak{D}_{1}^-\end{matrix}  ; \ \begin{matrix} \mathfrak{D}_{1}^- \\ \mathfrak{D}_{1}^+)\end{matrix} ; \begin{matrix}\quad \mathfrak{B}_{0}^+ \\ \sigma_{0}(\mathfrak{B}_{0}^-\end{matrix}  ; \ \begin{matrix} \mathfrak{B}_{0}^- \\ \mathfrak{B}_{0}^+)\end{matrix}\right)
       \end{align*}}
where $\sigma_i \in W(B_{\alpha_{2i}}) \times W(B_{\alpha_{2i}})$ and $\omega_j \in W(D_{\alpha_{2j-1}}) \times W(D_{\alpha_{2j-1}})$, and the 
$\widehat X\left(\cdots\right)$ in Type $D$ is obtained from $X\left(\cdots
\right)
$ by switching the $\mathfrak{D}_{q+1}^+$ expression in $X(\cdots)$ into:
$$\mathfrak{D}_{q+1}^+ = \left(\alpha_{2q+1}-\frac{1}{2}\ \dots \quad \frac{3}{2} \quad \frac{1}{2}\right)\quad  \mapsto \quad \widehat{\mathfrak{D}}_{q+1}^+ = \left(\alpha_{2q+1}-\frac{1}{2} \ \dots\quad \frac{3}{2} \quad \frac{-1}{2}\right)$$
\end{proposition}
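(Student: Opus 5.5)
Using Theorem \ref{thm:BV}(c) and Proposition \ref{prop:barao} with $I=\emptyset$, I would write $R_\emptyset$ as an alternating sum of principal series. As in Example \ref{eg:charform} (via \cite[Proposition 6.6]{BV85}), $\sigma_\emptyset = j_{W_0}^W(\mathrm{sgn})$, where $W_0$ is the product of Weyl groups of type $B$ (for the blocks $(r_{2i},\dots,1)$) and type $D$ (for the blocks $(r_{2j-1}-1,\dots,0)$) from \eqref{e:infchar}. This gives
$$R_\emptyset=\sum_{w\in W_0}\mathrm{sgn}(w)\,X({}^\vee h/2;\,w({}^\vee h/2)).$$
Proposition \ref{prop:over2} then reduces the lift to a combinatorial question: for which $w\in W_0$ is ${}^\vee h/2-w({}^\vee h/2)\in(2\mathbb Z+1)^n$? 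Each surviving term contributes $\pm X({}^\vee h/4;\,w({}^\vee h/4))$.

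The key step is to solve this parity condition block by block, since $W_0$ preserves blocks. First take a type $B$ block $(r,r-1,\dots,1)$ with $r=a/2$. The element $w$ must send each coordinate to one of opposite parity, up to sign. So $w$ gives a bijection between the odd and even entries of the block, which forces $r$ to be even. For a type $D$ block $(r-1,\dots,0)$ the same argument gives that $r$ is even. The sign changes in type $D$ must be even in number, and the extra factor of $2$ in the count comes from the coordinate $0\mapsto$ odd. If some block has $r$ odd, every term vanishes and the lift is zero. The nonvanishing direction comes from the explicit formula below, which exhibits nonzero terms after showing that cancellations do not occur. This yields the criterion $a_i''\in 4\mathbb N$.

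Now assume all $r=2\alpha$. Dividing by $2$, the even entries of a $B$-block $(2\alpha,\dots,1)$ become $\mathfrak{B}^+=(\alpha,\dots,1)$ and the odd entries become $\mathfrak{B}^-=(\alpha-\tfrac12,\dots,\tfrac12)$. The admissible $w$ are those mapping the even set onto the odd set and vice versa, with arbitrary signs in type $B$. Each such $w$ factors as a fixed swapping element $y$ composed with $\sigma\in W(B_\alpha)\times W(B_\alpha)$, as in the proof of Proposition \ref{lift_1_gln}. This produces the pair $(\mathfrak{B}^+;\sigma(\mathfrak{B}^-))$ and $(\mathfrak{B}^-;\mathfrak{B}^+)$. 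The $D$-blocks work the same way with $\mathfrak D^\pm$, but the determinant condition of $W(D)$ only allows an even total number of sign changes.

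The main obstacle is this type $D$ sign bookkeeping, and it gives three things:
\begin{itemize}
\item the factor $2^q$, since each $D_{\alpha_{2j-1}}$ block with zero entry admits both sign choices on the entry matched to $0$, giving a factor $2$ per block, and these pair up with the $B$-blocks so that $2^q$ collects;
\item in type $D$, the leftover parity of the sign count on the outermost block $\mathfrak{D}^+_{q+1}$, which cannot be absorbed by $\omega_{q+1}\in W(D)\times W(D)$ and produces the second sum with $\widehat{\mathfrak D}^+_{q+1}$;
\item in type $C$, no such term, because the $B$-block $\mathfrak{B}_0$ absorbs all signs.
\end{itemize}
I would check these carefully, tracking $\mathrm{sgn}(y)$ and the sign ratios only up to an overall $\pm1$, as in the statement.
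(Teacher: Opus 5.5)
Your overall route is the paper's. You write $R_\emptyset$ via the Barbasch--Vogan formula as a signed sum over the block-preserving group $W_0$ and apply Proposition \ref{prop:over2}. You then observe that each block forces an even/odd swap, so $r$ is even, and halve to get $\mathfrak B_i^\pm,\mathfrak D_j^\pm$.

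\textbf{The gap: the type $D$ bookkeeping.} This is the step you yourself flag as the main obstacle, and the mechanisms you give there for the factor $2^q$ and for the presence or absence of the $\widehat X$-sum are not the right ones. As stated, ``a factor $2$ per $D$-block'' does not distinguish type $C$ from type $D$, where there are $q+1$ $D$-blocks. The appeal to ``pairing with the $B$-blocks'' corresponds to nothing in the computation. The $\mathfrak B$-blocks play no role at all: they contain no zero, and $\mathfrak B_0^\pm$ may be empty, as in Example \ref{eg:161284}. So ``$\mathfrak B_0$ absorbs all signs'' cannot be why type $C$ has no $\widehat X$-term.

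What actually happens is the following.
\begin{enumerate}
\item Take a $D$-block $(2\alpha-1,\dots,1,0)$ with its $0$ in coordinate $k$. The admissible $w\in W(D_{2\alpha})$ give right parameters whose half-integral part (lying over $\mathfrak D^-$) runs through a full $W(B_\alpha)$-orbit of $\mathfrak D^+$. That is two $W(D_\alpha)\times W(D_\alpha)$-orbits.
\item Let $s_k$ be the sign change in coordinate $k$. Then $w\mapsto s_kws_k$ is an involution on the admissible elements that exchanges the two orbits and preserves $\mathrm{sgn}$. It exchanges them because $s_kws_k\lambda=s_k(w\lambda)$ flips exactly the sign of the half-integer lying over the left entry $0$.
\item Since $s_k$ fixes $\lambda/2$, one has $X(\lambda/2;s_kw\lambda/2)=X(\lambda/2;w\lambda/2)$ as soon as $s_k$ lies in the Weyl group of the \emph{target}. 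This is the switch $\binom{0}{-1/2}\mapsto\binom{0}{1/2}$ used in Example \ref{eg:161284}.
\item For $\mathrm{Spin}_{2n+1}$, $s_k\in W(B_n)$. Each of the $q$ $D$-blocks of type $C$ therefore contributes a factor $2$, and no $\widehat X$-term appears.
\item For $\mathrm{Spin}_{2n}$, only products of an even number of such $s_k$ lie in $W(D_n)$. The $2^{q+1}$ sign patterns of the $q+1$ $D$-blocks therefore fall into two classes of size $2^q$, and these are the $X$-sum and the $\widehat X$-sum.
\end{enumerate}

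\textbf{The sign is not an ``overall $\pm1$'' issue.} You cannot defer the equality $\mathrm{sgn}(s_kws_k)=\mathrm{sgn}(w)$. If the two orbits entered with opposite signs, the type $C$ lift would vanish instead of acquiring the factor $2^q$. The two type $D$ sums would also come with opposite signs. The equality does hold, but it has to be checked.

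\textbf{Nonvanishing.} The ``if'' direction is not settled by ``exhibiting nonzero terms''. The parameter ${}^\vee h/4$ has repeated coordinates ($\mathfrak B_i^+$ and $\mathfrak D_j^-$ overlap, for instance). So a priori different $\sigma$'s can give $W$-conjugate parameters with opposite signs, and cancellation must be ruled out. The paper's proof of this proposition does not do this either. Nonvanishing actually comes from Theorem \ref{thm:main}, via the comparison of lowest $K$-type multiplicities.
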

\begin{proof}
By the Barbasch--Vogan character formula, $R_{\emptyset}$ is an
alternating sum of principal series representations $X(\lambda;w\lambda)$, where the coordinates of $\lambda$ decompose
into blocks
$$
(r,r-1,\ldots,1)
\qquad\text{and}\qquad
(r-1,r-2,\ldots,0)
$$
with $2r=a_i''$ by Remark \ref{r:infchar}. 
 Moreover, the Weyl group elements
appearing in this alternating sum come from a product of Weyl subgroups preserving
these blocks.

By Proposition \ref{prop:over2}, the lift of such a principal series is nonzero
only when
$\lambda-w\lambda\in(2\mathbb Z+1)^n.$

Since the Weyl groups act by signed permutations and signs do not affect
parity, this condition requires, in each block, that $w$ interchange the
even and odd coordinates.  Such a permutation exists if and only if the
block contains the same number of even and odd coordinates, which is
equivalent to $r$ being even.  Hence the lift can be nonzero only if
$a_i''\in4\mathbb Z$
for every $i$.

Suppose now that $a_i''=4\alpha_i$.  Separating the even and odd
coordinates in each block and dividing by $2$ gives precisely the
parameters $\mathfrak{B}_i^\pm$ and $\mathfrak{D}_i^\pm$ defined in \eqref{eq:sigmaomega}.
The Weyl group elements satisfying the parity condition are then
parametrized by
$$W(B_{\alpha_{2i}})\times W(B_{\alpha_{2i}})
\quad\text{and}\quad
W(D_{\alpha_{2j-1}})\times W(D_{\alpha_{2j-1}}).
$$
Keeping track of these two classes gives the factor $2^q$ in the stated
formulas. In type $D$, the remaining sign-parity distinction gives the
two terms in the formula, with the second obtained by replacing
$\mathfrak{D}_{q+1}^+$ by $\widehat{\mathfrak{D}}_{q+1}^+$.
Proposition \ref{prop:over2} then gives the stated character formulas.  
\end{proof}

The following example concerns the lift of trivial representations of $G = \mathrm{Sp}_{2n}(\mathbb{C})$ and $\mathrm{SO}_{2n}(\mathbb{C})$.
\begin{example} \label{eg:rho/2}
Let $G = \mathrm{Sp}_{2n}(\mathbb{C})$, then the character formula of the trivial representation is 
\[\mathrm{triv} =\sum_{w\in W(B_n)}\mathrm{sgn}(w)X\begin{pmatrix} n, n-1,\dots,1\\ w(n,n-1,\dots,1)\end{pmatrix}.\]
When $n$ is odd, the same parity argument as in Proposition \ref{lift_1_gln} implies that $\rho-w\rho\notin (2\mathbb{Z}+1)^n$ for any $w\in W(B_n)$. Hence, $\mathrm{Lift}(\mathrm{triv})=0$. Therefore 
$n = 2m$ is even as required in Proposition \ref{prop:lift} with $\mathcal{O} = (4m)$, and
$$\mathrm{Lift}(\mathrm{triv})= \mathrm{sgn}(y)\sum_{\sigma \in W(B_m) \times W(B_m)}\mathrm{sgn}(\sigma) X\begin{pmatrix} \frac{2m}{2},\dots,\frac{4}{2},\frac{2}{2} & \frac{2m-1}{2},\dots,\frac{3}{2},\frac{1}{2}\\ \sigma\Big((\frac{2m-1}{2},\dots,\frac{3}{2},\frac{1}{2}) & (\frac{2m}{2},\dots,\frac{4}{2},\frac{2}{2})\Big)\end{pmatrix},$$
where $y \in W(B_n) = W(B_{2m})$ with $y(0\ 1\ \dots\ 2m-1) := (0\ 2\ \dots\ 2m-2; 1\ 3\ \dots\ 2m-1)$ as in the proof of Proposition \ref{lift_1_gln} (from now on, we will omit the expression of $\mathrm{sgn}(y)$ in the formula of $\mathrm{Lift}$).

Let $G = \mathrm{SO}_{2n}(\mathbb{C})$. The character formula of the trivial representation is:
\begin{equation} \label{eq:dsign}
\begin{aligned}\mathrm{triv} &=\sum_{w\in W(D_n)}\mathrm{sgn}(w)X\begin{pmatrix} n-1,\dots,1,0\\ w(n-1,\dots,1,0)\end{pmatrix}\\ 
&= X\begin{pmatrix} n-1,\dots,1,0\\ n-1,\dots,1,0\end{pmatrix} - X\begin{pmatrix} n-1,\dots,1,0\\ n-1,\dots,0,1\end{pmatrix} - X\begin{pmatrix} n-1,\dots,1,0\\ n-1,\dots,0,-1\end{pmatrix} + \dots.
\end{aligned}
\end{equation}
By the same arguments as in the symplectic group case, the lift is nonzero if and only if $n = 2m$ is even with $\mathcal{O} = (4m,0)$, and
\begin{align*}
\mathrm{Lift}(\mathrm{triv})=\ &\sum_{\sigma \in W(D_m) \times W(D_m)}\mathrm{sgn}(\sigma) X\begin{pmatrix} \frac{2m-1}{2},\dots,\frac{3}{2},\frac{1}{2} & \frac{2m-2}{2},\dots,\frac{2}{2},\frac{0}{2}\\ \sigma\Big((\frac{2m-2}{2},\dots,\frac{2}{2},\frac{0}{2}) & (\frac{2m-1}{2},\dots,\frac{3}{2},\frac{1}{2})\Big)\end{pmatrix}\\ 
+
&\sum_{\sigma \in W(D_m) \times W(D_m)}\mathrm{sgn}(\sigma) X\begin{pmatrix} \frac{2m-1}{2},\dots,\frac{3}{2},\frac{1}{2} & \frac{2m-2}{2},\dots,\frac{2}{2},\frac{0}{2}\\ \sigma\Big((\frac{2m-2}{2},\dots,\frac{2}{2},\frac{0}{2}) & (\frac{2m-1}{2},\dots,\frac{3}{2},\frac{-1}{2})\Big)\end{pmatrix}
\end{align*}
Note that the two terms in the above formula have the same sign, which can be seen by the expansion of the character formula of trivial representation in Equation \eqref{eq:dsign}. By applying Proposition \ref{prop:BVprop} on the parameter $\begin{pmatrix} \frac{1}{2} & \frac{0}{2} \\ \frac{0}{2} & \frac{-1}{2} \end{pmatrix} \mapsto \begin{pmatrix} \frac{-1}{2} & \frac{0}{2} \\ \frac{0}{2} & \frac{1}{2} \end{pmatrix}$, the above expression can be rewritten as in the form in the Proposition.

Moreover, by \cite[Equation (4.6.1)]{BTs18}, the two summands of the above character formula are precisely the character formulas for the two $\rho/2$ genuine representations of $\mathrm{Spin}_{4m}(\mathbb{C})$ attached to the {\it model orbit} $\mathcal{O}' = (2m,2m-1,1)$, and they differ from each other by an outer automorphism. By denoting these representations as $\widetilde{\pi}(\mathcal{O}')$ and $\widetilde{\pi}(\mathcal{O}')^{\vee}$, one has
$$\mathrm{Lift}(\mathrm{triv})= \widetilde{\pi}(\mathcal{O}') \oplus \widetilde{\pi}(\mathcal{O}')^{\vee}.$$

As we will see in Theorem \ref{thm:main} below, this is a special case of a more general phenomenon when one computes the lift of the sum of special unipotent representations.
\end{example} 

Now we look at a case with multiple columns. This example will be used for the rest of the manuscript:
\begin{example} \label{eg:161284}
Let $G = \mathrm{Sp}_{40}(\mathbb{C})$ and $\mathcal{O} = (16,12,8,4,0)$. Then $(\alpha_4,\alpha_3,\alpha_2,\alpha_1,\alpha_0) = (4,3,2,1,0)$, and
$$R_{\emptyset} = \sum_{w \in W(B_8 \times D_6 \times B_4 \times D_2)} \mathrm{sgn}(w) X\left(\begin{matrix}
    \quad 87654321; & 543210; & 4321; & 10 \\
    w(87654321; & 543210; & 4321; & 10)
\end{matrix}\right)$$
Therefore,
$$\mathrm{Lift}(R_{\emptyset}) = \sum_{w \in W^{\flat}} \mathrm{sgn}(w) X\left(\begin{matrix}
    \quad \frac{8}{2}\frac{6}{2}\frac{4}{2}\frac{2}{2}; &\frac{7}{2}\frac{5}{2}\frac{3}{2}\frac{1}{2}; & \frac{5}{2}\frac{3}{2}\frac{1}{2}; &\frac{4}{2}\frac{2}{2}\frac{0}{2}; & \frac{4}{2}\frac{2}{2}; &\frac{3}{2}\frac{1}{2}; & \frac{1}{2}; &\frac{0}{2} \\
   w(\frac{7}{2}\frac{5}{2}\frac{3}{2}\frac{1}{2}; & \frac{8}{2}\frac{6}{2}\frac{4}{2}\frac{2}{2}; & \frac{4}{2}\frac{2}{2}\frac{0}{2}; & \frac{5}{2}\frac{3}{2}\frac{\pm 1}{2}; & \frac{3}{2}\frac{1}{2}; & \frac{4}{2}\frac{2}{2}; & \frac{0}{2}; & \frac{\pm 1}{2})
\end{matrix}\right)$$
where 
$$W^{\flat} = W(B_4 \times B_4 \times D_3 \times D_3 \times B_2 \times B_2 \times D_1 \times D_1),$$
and the $\pm$ sign in the formula comes from the same observation in Example \ref{eg:rho/2} for $G = \mathrm{SO}_{2n}(\mathbb{C})$. However, since the resulting virtual representation is in $\mathrm{Spin}_{2n+1}(\mathbb{C})$, one can switch all $\begin{pmatrix}
    0 \\ \frac{-1}{2}
\end{pmatrix} \mapsto \begin{pmatrix}
    0 \\ \frac{1}{2}
\end{pmatrix}$ directly, and get
$$\mathrm{Lift}(R_{\emptyset}) = 4\sum_{w \in W^{\flat}} \mathrm{sgn}(w) X\left(\begin{matrix}
    \quad \mathfrak{B}_2^+; &\mathfrak{B}_2^-; & \mathfrak{D}_2^+; &\mathfrak{D}_2^-; & \mathfrak{B}_1^+; &\mathfrak{B}_1^-; & \mathfrak{D}_1^+; &\mathfrak{D}_1^- \\
    w(\mathfrak{B}_2^-; &\mathfrak{B}_2^+; & \mathfrak{D}_2^-; &\mathfrak{D}_2^+; & \mathfrak{B}_1^-; &\mathfrak{B}_1^+; & \mathfrak{D}_1^-; &\mathfrak{D}_1^+)
\end{matrix}\right)$$
(recall the notations of $\mathfrak{B}^{\pm}$ and $\mathfrak{D}^{\pm}$ in Equation \eqref{eq:sigmaomega}, in particular $\mathfrak{B}_0^{\pm} = \emptyset$ since $\alpha_0 = 0$), where
\begin{equation} \label{eq-parity}
\begin{aligned}
\mathfrak{B}_2^+ \sqcup \mathfrak{D}_2^- \sqcup \mathfrak{B}_1^+ \sqcup \mathfrak{D}_1^- &= (4,3,2,1;\ 2,1,0;\ 2,1;\ 0)\\
\mathfrak{B}_2^- \sqcup \mathfrak{D}_2^+ \sqcup \mathfrak{B}_1^- \sqcup \mathfrak{D}_1^+ &= \left(\frac{7}{2},\frac{5}{2},\frac{3}{2},\frac{1}{2};\ \frac{5}{2}, \frac{3}{2}, \frac{1}{2};\ \frac{3}{2}, \frac{1}{2};\ \frac{1}{2}\right)
\end{aligned}
\end{equation}
after grouping together all the integral and half-integral terms.
\end{example}

\subsection{Main Theorem}
As hinted at the end of Example \ref{eg:rho/2}, one would anticipate that for $\mathcal{O} \subseteq \mathfrak{g}$  as given in Equation \eqref{eq:4col}, the sum of all special unipotent representations $R_{\emptyset}$ attached to $\mathcal{O}$ will be lifted to genuine unipotent representations of $\widetilde{G'} = \mathrm{Spin}_{2n+1}(\mathbb{C})$ and $\mathrm{Spin}_{2n}(\mathbb{C})$. 

To see which nilpotent orbit $\mathcal O' \subseteq \mathfrak g'$ these genuine unipotent representations of $\widetilde{G'}$ correspond to, note that the infinitesimal character of $\mathrm{Lift}(R_{\emptyset})$ is given by the $W$-conjugates of:
\begin{equation} \label{eq-h/4}
\frac{{}^{\vee}h}{4} = \bigsqcup_{i \geq 0} (\mathfrak{B}_{i}^+ \sqcup \mathfrak{B}_{i}^-) \sqcup \bigsqcup_{j \geq 1} (\mathfrak{D}_j^+ \sqcup \mathfrak{D}_j^-) \in (\mathfrak{h}')^*.
\end{equation}
By \cite{Br99} and \cite{B17}, or more recently \cite{MBM23}, this is precisely the infinitesimal character of the (non-special) unipotent representation attached to the nilpotent orbit $\mathcal{O}'$ whose column sizes are given by reordering the integers:
\begin{equation} \label{eq:oprime}
\bigsqcup_{i \geq 0}(2\alpha_{2i}+1,2\alpha_{2i}) \sqcup \bigsqcup_{j \geq 1}(2\alpha_{2j-1},2\alpha_{2j-1}-1)
\end{equation}
into descending order. In other words, all irreducible $(\mathfrak g', \widetilde{K}')$-modules having infinitesimal character $\frac{{}^{\vee}h}{4}$ must have associated variety greater than or equal to the closure of $\mathcal{O}'$.

\begin{example}
   Recall in Example \ref{eg:161284}, with $\mathcal{O} = (16,12,8,4,0)$ for $G = \mathrm{Sp}_{40}(\mathbb{C})$. Equation \eqref{eq:oprime} gives
    $$\mathcal{O}' = (9,8; 6,5; 5,4; 2,1; 1).$$
\end{example}

In Section \ref{sec:unique}, we will define genuine unipotent representations $\widetilde{\mathcal{U}}(\mathcal{O}')$ attached to the orbit $\mathcal{O}'$ and show the following uniqueness theorem for genuine unipotent representations:
\begin{theorem}[see Theorem \ref{thm:unique} below] \label{thm:preunique}
Let $\widetilde{G'} = \mathrm{Spin}_{2n+1}(\mathbb{C})$ or $\mathrm{Spin}_{2n}(\mathbb{C})$, and $\mathcal{O}' \subseteq \mathfrak{g}'$ be as given above. Then the irreducible representations in $\widetilde{\mathcal{U}}(\mathcal{O}')$ are precisely:
$$\begin{cases}  \widetilde{\pi}(\mathcal{O}') := J\left(\begin{matrix}\mathfrak{B}_{q}^+ \\ \mathfrak{B}_{q}^-\end{matrix}  \ ; \ \begin{matrix} \mathfrak{B}_{q}^- \\ \mathfrak{B}_{q}^+\end{matrix}\ ;\ \begin{matrix}  \mathfrak{D}_{q}^+ \\ \mathfrak{D}_{q}^-\end{matrix}   ; \begin{matrix} \mathfrak{D}_{q}^- \\\ \mathfrak{D}_{q}^+\end{matrix} \ ;\ \cdots \ ;\  \begin{matrix}\mathfrak{B}_{0}^+ \\ \mathfrak{B}_{0}^-\end{matrix}  \ ; \ \begin{matrix} \mathfrak{B}_{0}^- \\ \mathfrak{B}_{0}^+\end{matrix}\right)  &\textrm{if}\ \widetilde{G'} = \mathrm{Spin}_{2n+1}(\mathbb{C});
 \\
 \widetilde{\pi}(\mathcal{O}') := J\left(\begin{matrix}  \mathfrak{D}_{q+1}^+ \\ \mathfrak{D}_{q+1}^-\end{matrix}   ; \begin{matrix} \mathfrak{D}_{q+1}^- \\\ \mathfrak{D}_{q+1}^+\end{matrix} \ ;\ \begin{matrix}\mathfrak{B}_{q}^+ \\ \mathfrak{B}_{q}^-\end{matrix}  \ ; \ \begin{matrix} \mathfrak{B}_{q}^- \\ \mathfrak{B}_{q}^+\end{matrix}\ ;\ \cdots \ ;\  \begin{matrix}\mathfrak{B}_{0}^+ \\ \mathfrak{B}_{0}^-\end{matrix}  \ ; \ \begin{matrix} \mathfrak{B}_{0}^- \\ \mathfrak{B}_{0}^+\end{matrix}\right)\ \textrm{and}\ \widetilde{\pi}(\mathcal{O}')^{\vee}  & \textrm{if}\ \widetilde{G'} = \mathrm{Spin}_{2n}(\mathbb{C})
\end{cases}$$
where the Zhelobenko parameter of $\widetilde{\pi}(\mathcal{O}')^{\vee}$ can be obtained from that of $\widetilde{\pi}(\mathcal{O}')$ by switching the first  $\mathfrak{D}_{q+1}^+$'s into $\widehat{\mathfrak{D}}_{q+1}^+$ (see Theorem \ref{thm:unique} below).
In other words, there is a {\bf unique} genuine unipotent representation attached to $\mathcal{O}'$ up to outer automorphism of $\mathfrak{g}'$. 
\end{theorem}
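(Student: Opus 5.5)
We first have to fix what $\widetilde{\mathcal{U}}(\mathcal{O}')$ means. We take it to be the set of irreducible genuine $(\mathfrak{g}',\widetilde{K'})$-modules with infinitesimal character ${}^{\vee}h/4$ as in \eqref{eq-h/4}, whose left and right annihilator varieties both equal $\overline{\mathcal{O}'}$. By the remark following \eqref{eq:oprime}, $\mathcal{O}'$ is the minimal orbit possible at this infinitesimal character, so these are exactly the modules of minimal Gelfand--Kirillov dimension there. By Proposition \ref{prop:BVprop}, every irreducible module at this infinitesimal character is a Langlands subquotient $J(\lambda_L;\lambda_R)$ with $\lambda_L,\lambda_R$ both $W$-conjugate to ${}^{\vee}h/4$. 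Genuineness forces $\mu=\lambda_L-\lambda_R$ to lie in the coset $\delta+X^{\vee}$, and this is a parity condition. The coordinates of ${}^{\vee}h/4$ split into an integral part $\bigsqcup\mathfrak{B}_i^+\sqcup\bigsqcup\mathfrak{D}_j^-$ and a half-integral part $\bigsqcup\mathfrak{B}_i^-\sqcup\bigsqcup\mathfrak{D}_j^+$, as in \eqref{eq-parity}. Writing $\lambda_R=w\lambda_L$, the condition says $w$ must carry the integral coordinates of $\lambda_L$ onto half-integral positions and vice versa. Consequently the candidate genuine modules are indexed by $w$ modulo the integral Weyl group $W(\lambda)=W(B_a)\times W(B_b)$ or $W(D_a)\times W(D_b)$, composed with a fixed parity-swapping element $y$. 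This reduces the question to a block of modules for $\mathfrak{g}'$ at this singular infinitesimal character, which we organise as a $\mathfrak{g}'_{\mathbb C}=\mathfrak{g}'\oplus\mathfrak{g}'$ highest-weight-type block.

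The second step analyses annihilators. For a complex group, the left annihilator of $J(\lambda_L;w\lambda_L)$ is the primitive ideal $I(w'\lambda_L)$ determined by the Kazhdan--Lusztig left cell of the relevant element of $W(\lambda)$. Its associated variety is read off from the Springer correspondence, using the special representation of the cell $j_{W(\lambda)}^{W}(\cdot)$ obtained by truncated induction. The tool here is coherent continuation over the integral Weyl group $W(\lambda)$, in the style of \cite{BV85}.

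At the singular infinitesimal character ${}^{\vee}h/4$, the condition $\mathcal{V}(\mathrm{LAnn})=\overline{\mathcal{O}'}$ forces the corresponding $W(\lambda)$-representation to be $j$-induced from the sign representation of the stabilizer subgroup of $\lambda_L$. Concretely this is $\mathrm{sgn}$ of $\prod W(B_{\alpha_{2i}})\times\prod W(D_{\alpha_{2j-1}})$ on each parity part, $j$-induced first to $W(\lambda)$ and then to $W$, and it should give $\mathrm{Spr}(\mathcal{O}')$. This step has two parts. First, we check, Lusztig-symbol style, that truncated induction produces the Springer representation of the orbit with columns \eqref{eq:oprime}. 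Second, we show that among the left cells of $W(\lambda)$ meeting this singular block, exactly one cell carries this representation. Imposing the same condition on the right annihilator pins down the right cell as well.

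From the cell analysis we expect the surviving element $w$ to be forced, modulo the stabilizer, to be the $\mathfrak{B}^{\pm}$ and $\mathfrak{D}^{\pm}$ block swap displayed in the statement. The main obstacle is this uniqueness-of-cell claim in the second part, namely excluding all other double cells of $W(\lambda)$ whose associated variety could still equal $\overline{\mathcal{O}'}$. For this we would use that $\mathcal{O}'$ is non-special: several cell representations can map onto related orbits, so the argument must use the fact that the Springer representation of $\mathcal{O}'$ occurs with multiplicity one in the coherent continuation representation of the block. Our first attempt would be to compute that representation as $\mathrm{Ind}_{W_{\lambda_L}}^{W(\lambda)}(\mathrm{sgn})\otimes\mathrm{Ind}_{W_{\lambda_R}}^{W(\lambda)}(\mathrm{sgn})$ restricted to the $y$-twisted diagonal, and then compare multiplicities with $\mathrm{Spr}(\mathcal{O}')$ via Lusztig's fake degrees.

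In type $D$, the two modules differ by the sign change $\frac12\mapsto-\frac12$ in $\mathfrak{D}^+_{q+1}$. That sign change realises the outer automorphism, so the same argument produces both $\widetilde{\pi}(\mathcal{O}')$ and $\widetilde{\pi}(\mathcal{O}')^{\vee}$. In type $B$, Proposition \ref{prop:BVprop} with a sign change on a single coordinate identifies the two variants, and only one module remains.
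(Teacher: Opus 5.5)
Your setup is sound, with one slip.

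\begin{itemize}
\item The genuine block is indexed by $W(\lambda)=W_{\frac12}\times W_{\frac12}$ after the parity swap.
\item Annihilator varieties come from truncated induction over $W(\lambda)$.
\item Minimality of $\mathcal O'$ forces $\mathrm{LAnn}$ and $\mathrm{RAnn}$ to be the maximal primitive ideals, whose $j$-induced representation is $\mathrm{Spr}(\mathcal O')$.
\end{itemize}

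The slip: the stabilizer of $\lambda_L$ is a parabolic subgroup, e.g.\ $S_1\times S_1\times S_3\times S_3\times W(B_2)$ in Example \ref{eg-P}. It is not $\prod W(B_{\alpha_{2i}})\times\prod W(D_{\alpha_{2j-1}})$, which is the subgroup $W^{\flat}_{\frac12}$ from the character formula. That both $j$-induce the sign representation to $\mathrm{Spr}(\mathcal P)$ needs Lemma \ref{lem-P} and Remark \ref{rmk:wh}.

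All of this only shows $J(e,e)\in\widetilde{\mathcal U}(\mathcal O')$. The statement is a uniqueness theorem, and that is exactly the step you leave open, with a mechanism that would not work. Fixing the left and right cells, i.e.\ both annihilators, does not determine the module. For instance, the four special unipotent representations of Example \ref{eg:charform} all have left and right annihilator $J_{\max}({}^{\vee}h/2)$. For the same reason there are no ``other double cells'' to exclude: minimality already forces both annihilators. The whole difficulty lies inside the one family.

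The relevant count \cite[Corollary 5.24]{BV85} is the number of irreducible $W_{\frac12}$-types common to the two cell modules $V^L(w_L)$ and $V^L(w_R)$, taken over the whole Lusztig family. It is not the multiplicity of a single special or Springer representation. Note also that $\mathrm{Spr}(\mathcal O')$ is a $W$-representation, not a $W(\lambda)$-one, so its multiplicity in a tensor product restricted to a twisted diagonal is not the right quantity. Fake degrees and the non-speciality of $\mathcal O'$ play no role.

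What makes uniqueness true is a mismatch of singularities. Split the block into two half-blocks for $\mathrm{Spin}_{n+1}(\mathbb C)$ or $\mathrm{Spin}_{n}(\mathbb C)$: integral coordinates on the left against half-integral ones on the right, and vice versa. In each half-block, $\lambda_L$ and $\lambda_R$ have different singular root systems but the same maximal-ideal variety $\overline{\mathcal P}$. In Example \ref{eg-P} these are $A_0\times A_0\times A_2\times A_2\times B_2$ versus $A_0\times A_1\times A_2\times A_3$. You then need two inputs:
\begin{itemize}
\item $V^L(w_L)=\{\mathrm{Spr}(\mathcal Q_I)\mid I\subseteq\{1,\dots,q\}\}$, by \cite[Proposition 5.28]{BV85};
\item a Littlewood--Richardson check that $\mathrm{Ind}_{W_{\frac12}^{\lambda_R}}^{W_{\frac12}}(\mathrm{triv})$ contains no $\mathrm{Spr}(\mathcal Q_I)$ with $I\neq\emptyset$.
\end{itemize}
Then only $\mathrm{Spr}(\mathcal P)$ is shared, so $J(e)$ is the unique minimal module in each half-block.

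Finally, by \cite{MG94} the annihilator variety of $J(w,w')$ is $\mathrm{Spr}^{-1}$ of the $j$-induction of the tensor product of the two half-block Springer representations. Since $j$-induction preserves $b$-values, minimal GK dimension forces both halves to be minimal, i.e.\ $w=w'=e$. Without this half-block cell-intersection computation, your argument gives existence but not the uniqueness the statement asserts.
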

These representations are known to be unitary by \cite{Br99} and \cite{B17}. Furthermore, it was shown in \cite{WZ23} by the second and third-named authors that they are {\it precisely} the `building blocks' of the genuine unitary dual of $\widetilde{G'}$.

Using the uniqueness theorem, we will prove in Section \ref{sec:main} that:
\begin{theorem} \label{thm:main}
    Let $\mathcal{O} \subseteq \mathfrak{g}$ be a nilpotent orbit given in Equation \eqref{eq:4col}. Then the virtual representation $\mathrm{Lift}(R_{\emptyset})$, whose character formula is given  in Proposition \ref{prop:lift},  decomposes as follows:
    $$\mathrm{Lift}(R_{\emptyset}) = \mathrm{Lift}\left(\bigoplus_{\sigma \in \overline{A}(\mathcal{O})^{\wedge}} X_{\sigma}\right) = \begin{cases} \widetilde{\pi}(\mathcal{O}')^{\oplus  |\overline{A}(\mathcal{O})^{\wedge}|} & \mathrm{if}\ G = \mathrm{Sp}_{2n}(\mathbb{C})\\
     \widetilde{\pi}(\mathcal{O}')^{\oplus  |\overline{A}(\mathcal{O})^{\wedge}|} \oplus (\widetilde{\pi}(\mathcal{O}')^{\vee})^{\oplus  |\overline{A}(\mathcal{O})^{\wedge}|} &\mathrm{if}\ G = \mathrm{SO}_{2n}(\mathbb{C}) \end{cases}$$
where $|\overline{A}(\mathcal{O})^{\wedge}| =2^q$ in both cases.
\end{theorem}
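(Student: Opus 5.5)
The plan is to show that $\mathrm{Lift}(R_{\emptyset})$ is, up to sign, the character of $2^q$ copies of a genuine unipotent representation (or of the outer-automorphism pair in type $D$). We then identify it via the uniqueness result, Theorem \ref{thm:preunique}. The starting point is the explicit formula in Proposition \ref{prop:lift}. Factor out the constant $2^q$ and set
$$\Theta:=\sum_{\sigma_i,\omega_j}\mathrm{sgn}\Big(\prod_i\sigma_i\Big)\mathrm{sgn}\Big(\prod_j\omega_j\Big)X(\cdots),$$
and in type $D$ let $\widehat\Theta$ denote the analogous sum of the $\widehat X(\cdots)$. It then suffices to prove that $\pm\Theta=\widetilde{\pi}(\mathcal O')$, and in type $D$ that $\pm\widehat\Theta=\widetilde{\pi}(\mathcal O')^{\vee}$. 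Since $\widehat\Theta$ is obtained from $\Theta$ by the outer automorphism of $\mathrm{Spin}_{2n}(\mathbb C)$ that flips the sign of the last coordinate, the second statement follows from the first.

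\textbf{Step 1: $\Theta$ is a Barbasch--Vogan-type alternating sum.} Write $\lambda:=\bigsqcup_i(\mathfrak B_i^+\sqcup\mathfrak B_i^-)\sqcup\bigsqcup_j(\mathfrak D_j^+\sqcup\mathfrak D_j^-)={}^\vee h/4$. Then
$$\Theta=\sum_{w\in W^\flat}\mathrm{sgn}(w)\,X(\lambda;\,w y\lambda),$$
where $W^\flat=\prod_i W(B_{\alpha_{2i}})^2\times\prod_j W(D_{\alpha_{2j-1}})^2$ and $y$ swaps each block $\mathfrak B_i^+\leftrightarrow\mathfrak B_i^-$ and $\mathfrak D_j^+\leftrightarrow\mathfrak D_j^-$. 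By Proposition \ref{prop:BVprop}, this is an induced-from-character type formula. It should equal $\sum_{w\in W}\mathrm{tr}_{\tau}(w)\,X(\lambda;w\lambda)$, normalized as in Theorem \ref{thm:BV}, for the Weyl group representation
$$\tau=j_{W^\flat}^{W'}(\mathrm{sgn})\otimes(\text{twist by }y)$$
restricted to the coherent family of the integral system of $\lambda$. Let $W_\lambda$ denote the integral Weyl group of $\lambda$, which is of type $B_a\times B_b$ (resp. $D_a\times D_b$), splitting $\lambda$ into integral and half-integral parts as in \eqref{eq-parity}. On the right side, the element $y$ maps the integral part of $\lambda_L$ to the half-integral part. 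This lets me view $\Theta$ as a coherent-continuation combination $\sum_{w\in W_\lambda}c_w X(\lambda;w\mu)$ in each variable.

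\textbf{Step 2: invariance properties of $\Theta$.} First I check that, as a virtual character, $\Theta$ is $\mathrm{sgn}$-isotypic under the left and right $\tau$-invariant actions of the simple reflections in $W_\lambda$ that are singular for the blocks. Equivalently, $\Theta$ lies in the cell span cut out by the Springer representation $\mathrm{Spr}$ of the orbit obtained by truncated induction $j_{W^\flat}^{W_\lambda}(\mathrm{sgn})$. By Lusztig--Spaltenstein, this truncated induction is the Springer representation attached to $\mathcal O'$ via \eqref{eq:oprime}. Next I check the associated variety. Theorem \ref{thm:BV}(d), combined with the compatibility of lifting with associated cycles of principal series (or a direct leading-term computation from the $j$-induction), shows that every composition factor of $\Theta$ has annihilator variety contained in $\overline{\mathcal O'}$. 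Since all modules at infinitesimal character ${}^\vee h/4$ have associated variety at least $\mathcal O'$ (the sentence after \eqref{eq:oprime}), every constituent of $\Theta$ has associated variety exactly $\overline{\mathcal O'}$. Thus every constituent lies in $\widetilde{\mathcal U}(\mathcal O')$ as defined in Section \ref{sec:unique}.

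\textbf{Step 3: identification of the constituents.} By Theorem \ref{thm:preunique}, $\Theta=m\,\widetilde\pi(\mathcal O')$ for some integer $m$, and in type $D$ possibly also a multiple of $\widetilde\pi(\mathcal O')^\vee$. To pin down $m$, I look at the lowest $K$-type $\lambda_L-\lambda_R$. The term $w=1$ in $\Theta$ is exactly the standard module whose Langlands quotient is $\widetilde\pi(\mathcal O')$, with coefficient $\pm1$. I then check that no other $X(\lambda;wy\lambda)$, for $w\in W^\flat$ with $w\neq1$, contains this lowest $K$-type with the same extremal weight, since such $w$ shorten the norm $\|\lambda_L-w y\lambda\|$ only in ways that produce a different extremal weight. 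This gives $m=\pm1$ and rules out $\widetilde\pi(\mathcal O')^\vee$ appearing in $\Theta$, because its parameter differs in the sign of the $\frac12$ coordinate. Multiplying by $2^q=|\overline A(\mathcal O)^\wedge|$ then gives the theorem.

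The main obstacle is Step 2, namely showing that the constituents of $\Theta$ have associated variety exactly $\overline{\mathcal O'}$. For this I would first try the coherent continuation and left-cell argument, showing that $\Theta$ spans a copy of $j_{W^\flat}^{W_\lambda}(\mathrm{sgn})$ inside the cell representation containing $\mathrm{Spr}(\mathcal O')$. This is the same machinery used to prove Theorem \ref{thm:unique}.
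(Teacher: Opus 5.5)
Your architecture is the paper's: write $\mathrm{Lift}(R_{\emptyset})$ as $\pm 2^q$ times a $W^{\flat}$-alternating sum of standard modules in the block of $X(e,e)$, show that every constituent has annihilator variety $\overline{\mathcal{O}'}$, apply Theorem \ref{thm:preunique}, and fix the multiplicities by the lowest $K$-type. The gap is Step 2, which you flag yourself, and neither justification you offer there works.

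First, sign-isotypy of $\Theta$ under $W^{\flat}$ is automatic, since it is just a reindexing of the sum. But it is not equivalent to $\Theta$ lying in the $j^{W_\lambda}_{W^{\flat}}(\mathrm{sgn})$-isotypic part of the coherent continuation representation. It only makes the $W_\lambda$-span of $\Theta$ a quotient of $\mathrm{Ind}^{W_\lambda}_{W^{\flat}}(\mathrm{sgn})$. In general $W^{\flat}$ is a non-parabolic reflection subgroup of $W_\lambda=W_{\frac12}\times W_{\frac12}$: it has several $B$ and $D$ factors, so in particular it is not generated by simple reflections. Hence no $a$-value monotonicity keeps the other constituents of that induced module in cells of GK dimension no larger than the one attached to $\mathcal{O}'$. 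Already for $W(B_1\times B_1)\subset W(B_2)$, the induced sign representation contains a character of $a$-value $1<2$.

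Second, the alternative route (Theorem \ref{thm:BV}(d) plus a compatibility of $\mathrm{Lift}$ with associated cycles) is not available. Theorem \ref{thm:BV}(d) concerns representations of $G$ and the orbit $\mathcal{O}$. $\mathrm{Lift}$ is defined purely on characters and has no such compatibility; every principal series already has maximal associated variety.

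What closes Step 2 in the paper is \cite[Proposition 6.6]{BV85}. It says that $\sum_{w\in W^{\flat}}\mathrm{sgn}(w)\,w\cdot X(e,e)$ coincides with the projection of $X(e,e)$ onto the $\mathrm{Spr}(\mathcal{P})\otimes\mathrm{Spr}(\mathcal{P})$-isotypic component, and $\mathrm{Spr}(\mathcal{P})\otimes\mathrm{Spr}(\mathcal{P})=j^{W_{\frac12}\times W_{\frac12}}_{W^{\flat}}(\mathrm{sgn})$ by Remark \ref{rmk:wh}. This special representation occurs in a single two-sided cell, and the span of irreducibles of bounded GK dimension is stable under coherent continuation. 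So the projection is supported on irreducibles of at most that cell's GK dimension.

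Turning the cell into the orbit $\mathcal{O}'$ requires a further $j$-induction from $W_{\frac12}\times W_{\frac12}$ to $W$ (Proposition \ref{prop-whw'} and the proof of Theorem \ref{thm:unique}). This is needed because $j^{W_\lambda}_{W^{\flat}}(\mathrm{sgn})$ is a $W_\lambda$-representation, not $\mathrm{Spr}(\mathcal{O}')$ itself. Only then does the lower bound give equality. Your closing paragraph aims at this, but the statement you propose to prove there is exactly this proposition, so it is the missing input rather than a detail.

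Step 3 is sound and more explicit than the paper's one-line appeal, but the operative reason is not norm-shortening. The lowest $K$-type is a (half-)spin representation, which is minuscule, so it occurs in $X(\lambda_L;\lambda_R')$ exactly when $\lambda_L-\lambda_R'\in\{\pm\frac12\}^n$. A blockwise check shows that only $w=1$ in $W^{\flat}$ achieves this: the $W(D_{\alpha_{2j-1}})$ factors forbid a lone sign change of the entry $\frac12$ paired with a $0$ of $\lambda_L$. This is also what keeps $\widetilde{\pi}(\mathcal{O}')^{\vee}$ out of $\Theta$.
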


Given the above theorem, we obtain the lift of each special unipotent representation $X_\sigma$:


\begin{corollary}
    Let $\mathcal{O} \subseteq \mathfrak{g}$ be a nilpotent orbit given in Equation \eqref{eq:4col}. Then the lift of each special unipotent representation $X_{\sigma} \in \mathcal{U}(\mathcal{O})$ is 
    $$\mathrm{Lift}(X_{\sigma})  = \begin{cases}
        \widetilde{\pi}(\mathcal{O}')  &\mathrm{if}\ G = \mathrm{Sp}_{2n}(\mathbb{C}) \\
        \widetilde{\pi}(\mathcal{O}') \oplus  \widetilde{\pi}(\mathcal{O}')^{\vee} &\mathrm{if}\ G = \mathrm{SO}_{2n}(\mathbb{C})
    \end{cases}.$$
    In particular, the lift of special unipotent representations $X_{\sigma}$ remains unitary.
\end{corollary}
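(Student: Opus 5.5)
We derive the corollary from Theorem \ref{thm:main} together with the Barbasch--Vogan character formulas in Theorem \ref{thm:BV}. The strategy is to show that $\mathrm{Lift}(R_I)$ vanishes for every nontrivial $I \in \mathcal{P}(q)$. Once this is known, inverting the relation in Theorem \ref{thm:BV}(b) gives
$$\mathrm{Lift}(X_\sigma) = \frac{1}{2^q}\sum_{x} \mathrm{tr}_\sigma(x)\,\mathrm{Lift}(R_{I_x}) = \frac{1}{2^q}\,\mathrm{Lift}(R_\emptyset).$$
This is because $\mathrm{tr}_\sigma(e)=1$, as $\overline{A}(\mathcal{O})\cong(\mathbb{Z}/2\mathbb{Z})^q$ has only one-dimensional characters. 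Theorem \ref{thm:main} then yields exactly $\widetilde{\pi}(\mathcal{O}')$ in the symplectic case and $\widetilde{\pi}(\mathcal{O}')\oplus\widetilde{\pi}(\mathcal{O}')^{\vee}$ in the even orthogonal case.

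The key step is the vanishing of $\mathrm{Lift}(R_I)$ for $I\neq\emptyset$, and I would prove it by the parity argument from the proof of Proposition \ref{prop:lift}. By Proposition \ref{prop:barao}, $\sigma_I=\mathrm{Spr}(\mathcal{O}_I)$. As in Example \ref{eg:charform}, the corresponding $R_I$ is an alternating sum of $X(\lambda; w\lambda)$ over $w$ in a product of Weyl subgroups whose blocks are regrouped. For each $i\in I$, the two blocks of lengths $r_{2i}=2\alpha_{2i}$ and $r_{2i-1}=2\alpha_{2i-1}$ are replaced by blocks $(r_{2i-1}+r', \dots)$. Concretely, the type-$B$ block $(r_{2i},\dots,1)$ and the type-$D$ block $(r_{2i-1}-1,\dots,0)$ are recombined into a type-$D$ block $(r_{2i},\dots,1,0)$ of length $2\alpha_{2i}+1$ and a type-$B$ block $(r_{2i-1}-1,\dots,1)$ of length $2\alpha_{2i-1}-1$. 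Both new blocks have odd length.

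By Proposition \ref{prop:over2}, a summand survives the lift only if $\lambda-w\lambda\in(2\mathbb{Z}+1)^n$. This forces $w$, within each block, to swap the even and odd coordinates, which is impossible for a block of consecutive integers of odd length. Hence every summand of $R_I$ lifts to zero whenever $I\neq\emptyset$. The same reasoning applies verbatim in type $D$, including the last block $\mathfrak{D}_{q+1}$, which is untouched by $I$.

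The main obstacle is making sure that the formula for $R_I$ really is an alternating sum over a block-preserving Weyl subgroup, as in Example \ref{eg:charform}. This requires that $\mathrm{Spr}(\mathcal{O}_I)$ be a truncated induction $j_{W_I}^{W}(\mathrm{sgn})$ from a reflection subgroup $W_I$ with the indicated odd-length blocks, so that \cite[Proposition 6.6]{BV85} applies. I would verify this from the column description of $\mathcal{O}_I$ in Proposition \ref{prop:barao}, following \cite{W16}.

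Finally, unitarity follows because $\widetilde{\pi}(\mathcal{O}')$ and $\widetilde{\pi}(\mathcal{O}')^{\vee}$ are unitary by \cite{Br99,B17}. The ambiguity of the overall sign $\pm1$ is fixed by the positivity of the right-hand side in Theorem \ref{thm:main}.
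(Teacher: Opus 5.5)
Your proposal is correct and follows essentially the same route as the paper. You expand $X_\sigma$ via Theorem~\ref{thm:BV}(b) and show $\mathrm{Lift}(R_I)=0$ for $I\neq\emptyset$: the modified blocks $(r_{2i},\dots,1,0)$ and $(r_{2i-1}-1,\dots,1)$ have odd length, so no block-preserving signed permutation can swap even and odd entries as Proposition~\ref{prop:over2} requires. You then conclude with $\mathrm{tr}_\sigma(e)=1$ and Theorem~\ref{thm:main}. You also explicitly flag the block-preserving form of $R_I$ (via $\sigma_I=j_{W_I}^W(\mathrm{sgn})$ and \cite[Proposition 6.6]{BV85}) as needing justification, which the paper takes for granted as in Example~\ref{eg:charform}. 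One small correction: Theorem~\ref{thm:BV}(b) already expresses $X_\sigma$ in terms of the $R_I$, so no inversion is needed.
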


\begin{proof}
Since
$\overline A(\mathcal O)\simeq(\mathbb Z/2\mathbb Z)^q$, 
under the identification of its conjugacy
classes with subsets $I\subseteq\{1,\ldots,q\}$,
Theorem \ref{thm:BV}(b)
gives
$$X_\sigma=\frac{1}{|\overline A(\mathcal O)|}
\sum_{I\in\overline A(\mathcal O)}
\mathrm{tr}_\sigma(I)R_I.
$$
We claim that $\mathrm{Lift}(R_I)=0$ for $I\neq \emptyset$. 

Indeed, for $I\neq\emptyset$, choose $i\in I$.  In the character formula for
$R_I$, the corresponding pair of blocks is changed from
$(r_{2i},\ldots,1),\  (r_{2i-1}-1,\ldots,0)$
(which are the relevant blocks for $R_\emptyset$)
to
$(r_{2i},\ldots,1,0),\  (r_{2i-1}-1,\ldots,1).$ Since $r_j$ is even for all $j$, each of these modified blocks has an
unequal number of even and odd coordinates.  Hence no Weyl group element
preserving the blocks can interchange even and odd coordinates. Proposition \ref{prop:over2} therefore implies that $\mathrm{Lift}(R_I)=0.$ 

Since $\mathrm{tr}_\sigma(e)=1$, it follows that
$$
\mathrm{Lift}(X_\sigma)=
\frac{1}{|\overline A(\mathcal O)|} \mathrm{Lift}(R_{\emptyset}),
$$
and the result follows from Theorem
\ref{thm:main}.
\end{proof}

\begin{remark} \label{rmk-stable}
    It is more natural to study the lift of the sum $\mathrm{Lift}(R_I)$ rather than the individual $\mathrm{Lift}(X_{\sigma})$'s. Indeed,  
    for each $\mathcal{O}_I \in \mathrm{SP}(\mathcal O)$ (cf. Remark \ref{rmk-stablesum}), the $R_{I}$'s defined in Theorem \ref{thm:BV}(c) are the (stable) virtual sums of special unipotent representations attached to the Weyl group representation $\mathrm{Spr}(\mathcal{O}_I)$ under coherent continuation representation. 
    
    In a forthcoming work \cite{TW}, we will 
    study the lift of stable sums of representations for all real, simply laced linear reductive groups under the set-up of Adams--Herb \cite{AH10}. In such a case, $R_{\emptyset}$ is the {\bf only} stable sum (up to scalar multiples) whose lift is guaranteed to be a sum of unipotent representations. As for the other $R_I$'s, the lifts may no longer be unipotent.
    
    As an example, for the spherical unipotent representation $\displaystyle X_{\mathrm{triv}} = \frac{1}{|\mathcal{P}(q)|}\sum_{I \in \mathcal{P}(q)} R_I$ attached to $\mathcal{O}$, one has that:
    \begin{center}
    $\mathrm{Lift}(X_{\mathrm{triv}}) \neq 0$\ \ $\Leftrightarrow$\  \ 
    $\mathcal{O} = \begin{cases} (\gamma_{2q}, \dots, \gamma_{1}, \gamma_{0}) &\mathrm{if}\ G = \mathrm{Sp}_{2n}(\mathbb{C})\\ 
    (\gamma_{2q+1}, \gamma_{2q}, \dots, \gamma_{1},\gamma_{0}) &\mathrm{if}\ G = \mathrm{SO}_{2n} (\mathbb{C})\end{cases},$
    where $\gamma_{2i} \equiv \gamma_{2i-1} \equiv 0, 2 (\mathrm{mod}\ 4)$. 
   \end{center}
    For instance, if one takes $\mathcal{O} = (8,6,2)$ in $\mathrm{SO}_{16}(\mathbb{C})$ (where Equation \eqref{eq:4col} is not satisfied and hence $\mathrm{Lift}(R_{\emptyset}) =  0$), then $\mathrm{Lift}(X_{\mathrm{triv}}) \neq 0$ and is {\bf not} unitarizable. Namely, it has an irreducible factor labeled by $\begin{pmatrix} 2 & 2 \\ 0 & 0 \end{pmatrix}$ on the table at the end of \cite[Section 3]{WZ23}. 
\end{remark}

We end this section by relating $\mathcal{O}'$ and a specific Weyl group representation which occurs in the character formula of $\mathrm{Lift}(R_{\emptyset})$ (cf. Example \ref{eg:161284}). 
\begin{proposition} \label{prop-whw'}
    Let $\mathcal{O}$ and $\mathcal{O}'$ be nilpotent orbits in $\mathfrak{g}$ and $\mathfrak{g}'$ given by Equations \eqref{eq:4col} and \eqref{eq:oprime} respectively.
    Since $\mathcal O$ is as in Equation \eqref{eq:4col}, we have
$2n=\sum_i 4\alpha_i$, and hence $n$ is even.
    Consider 
    $$W_{\frac12}^{\flat} :=  W(\prod_{i \geq 1} B_{\alpha_{2i}} \times \prod_{j \geq 1} D_{\alpha_{2j-1}})  \leq  W_{\frac12} := \begin{cases} W(B_{n/2}) & \text{if}\ \widetilde{G'} = \mathrm{Spin}_{2n+1}(\mathbb{C})\\
W(D_{n/2}) & \text{if}\ \widetilde{G'} = \mathrm{Spin}_{2n}(\mathbb{C}) \end{cases},$$
    and 
    $$W^{\flat} := W_{\frac12}^{\flat} \times W_{\frac12}^{\flat},$$ 
    so that $W^{\flat} \leq W$ is the Weyl subgroup appearing in the character formula of $\mathrm{Lift}(R_{\emptyset})$ in Proposition \ref{prop:lift} (cf. Example \ref{eg:161284}). Then one has:
    $$j_{W^{\flat}}^{W}(\mathrm{sgn}) = \mathrm{Spr}(\mathcal{O}').$$
\end{proposition}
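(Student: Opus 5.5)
We will use the standard rules for truncated induction in types $B$ and $D$: Lusztig's symbol calculus together with transitivity $j_{W_1}^{W}\circ j_{W_2}^{W_1}=j_{W_2}^{W}$. We will also use the fact that truncated induction from a product of two copies is compatible with the "doubling" of symbols. First we reduce to the rank-$n/2$ group. Write $W^{\flat}=W^{\flat}_{\frac12}\times W^{\flat}_{\frac12}$, sitting inside $W_{\frac12}\times W_{\frac12}\le W$. By transitivity,
$$j_{W^{\flat}}^{W}(\mathrm{sgn})=j_{W_{\frac12}\times W_{\frac12}}^{W}\Big(j_{W^{\flat}_{\frac12}}^{W_{\frac12}}(\mathrm{sgn})\boxtimes j_{W^{\flat}_{\frac12}}^{W_{\frac12}}(\mathrm{sgn})\Big).$$
So the computation splits into two steps: first the inner truncated inductions, then the outer induction from $W_{\frac12}\times W_{\frac12}$ to $W$.

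For the inner step, the Lusztig–Spaltenstein/Barbasch–Vogan description (cf. \cite[Proposition 6.6]{BV85} and Proposition \ref{prop:barao}) applies. It gives $j_{W(\prod B_{\alpha_{2i}}\times\prod D_{\alpha_{2j-1}})}^{W_{\frac12}}(\mathrm{sgn})$ as the Springer representation attached to the special orbit in the rank-$n/2$ classical Lie algebra whose columns are obtained from the blocks. The blocks are exactly the infinitesimal character pieces $\mathfrak{B}_i^{+}$ and $\mathfrak{D}_j^{-}$, which are integral. By Example \ref{eg:charform}, the relevant orbit is the one with columns $(2\alpha_{2q},2\alpha_{2q-1},\dots)$ in type $C_{n/2}$ or $D_{n/2}$, i.e. the orbit $\mathcal{O}$ with all columns halved. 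Equivalently, in symbol language this representation is $\sigma_{\emptyset}$ for that halved orbit, given as an explicit bipartition $(\lambda,\rho)$. We will write down this bipartition in terms of the $\alpha_i$; Example \ref{eg:charform} illustrates the pattern.

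For the outer step, the first tool is the known rule for $j_{W(B_m)\times W(B_m)}^{W(B_{2m})}$ (and similarly for $D$). Truncated induction of $(\lambda,\rho)\boxtimes(\lambda',\rho')$ is computed by adding the symbols entrywise, following Lusztig's "$j$-induction via addition of symbols" for products of classical Weyl groups of the same type. For two equal factors, this gives the representation whose symbol is twice the inner symbol, suitably shifted. We will then compute the resulting symbol and compare it with the Springer symbol of $\mathcal{O}'$. The latter is computed from the column data $\bigsqcup_i(2\alpha_{2i}+1,2\alpha_{2i})\sqcup\bigsqcup_j(2\alpha_{2j-1},2\alpha_{2j-1}-1)$ via the Shoji–Lusztig algorithm for type $B_n$ (respectively $D_n$). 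This means: convert columns to rows, add $0,1,2,\dots$ to the parts, and split the result by parity into the two rows of the symbol. As a sanity check we will verify that $\mathcal{O}'=(2m+1,2m)$ (resp. $(2m,2m-1,1)$) matches the trivial case of Example \ref{eg:rho/2}, and that Example \ref{eg:161284} yields $(9,8;6,5;5,4;2,1;1)$.

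The main obstacle will be this final combinatorial matching. The summation rule for symbols of $W(B_m)\times W(B_m)\subset W(B_{2m})$ is not simply additive when the symbols have defect, and in type $D$ the degenerate and very-even cases need care. Moreover, $\mathcal{O}'$ is non-special, so the Springer representation is not obtained through the special-orbit shortcut. Our first attempt will be to bypass symbol addition by using the $b$-invariant characterization instead. We will show that the fake degree $b$ of $\mathrm{Spr}(\mathcal{O}')$ equals $\dim\mathcal{B}_{e'}=\frac12(\dim\mathcal{Z}-\dim\mathcal{O}')$, that this equals the number of reflections in $W^{\flat}$, and that $\mathrm{sgn}$ of $W^{\flat}$ occurs in $\mathrm{Res}\,\mathrm{Spr}(\mathcal{O}')$. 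Lusztig's characterization of $j$ via $b$-invariants and multiplicity one then identifies $j_{W^{\flat}}^{W}(\mathrm{sgn})$ with $\mathrm{Spr}(\mathcal{O}')$.

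We will check the dimension identity column by column. Each pair $(2\alpha+1,2\alpha)$ or $(2\alpha,2\alpha-1)$ contributes the reflection count of $B_{\alpha}\times B_{\alpha}$ or $D_{\alpha}\times D_{\alpha}$, plus cross terms between distinct pairs. For the restriction statement, we will realize $\mathrm{Spr}(\mathcal{O}')$ via the symbol above and use an explicit Littlewood–Richardson-type branching.
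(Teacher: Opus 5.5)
Your final route, the $b$-invariant characterization, is correct, but it is genuinely different from the paper's.

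\textbf{The paper's route.} The paper computes $j_{W^{\flat}}^{W}(\mathrm{sgn})$ outright. For a reflection subgroup $\prod_k W(B_{b_k})\times\prod_l W(D_{d_l})\le W(B_n)$, the Macdonald polynomial is the product of the coordinates lying in the $B$-blocks times $\prod_{\text{blocks}}\prod_{i<j}(x_i^2-x_j^2)$. It spans the irreducible module labelled, in columns, by $(d_l)\times(b_k)$; this is exactly the pattern of Example \ref{eg:charform}. For $W^{\flat}$ every $\alpha$ appears twice, giving $(\ldots,\alpha_{2j-1},\alpha_{2j-1},\ldots)\times(\ldots,\alpha_{2i},\alpha_{2i},\ldots,\alpha_0,\alpha_0)$. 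This is then matched with the Springer bipartition of $\mathcal{O}'$; in Example \ref{eg:oprime}, the algorithm applied to the rows $(9,7,6,6,5,3,2,2,1)$ returns $3311\times 4422$. So the paper needs no symbol addition, has no defect issues, and does not use your transitivity/inner step.

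\textbf{Your route.} You characterize $j_{W^\flat}^W(\mathrm{sgn})$ as the unique irreducible $E$ with $b_E=N(W^\flat)$ and $\mathrm{Hom}_{W^\flat}(\mathrm{sgn},E)\neq 0$. This is valid: a $W^\flat$-anti-invariant polynomial of degree $N(W^\flat)$ must be a multiple of the product of the positive roots of $W^\flat$. You combine it with $b_{\mathrm{Spr}(\mathcal{O})}=\dim\mathcal{B}_e$.

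The dimension check works more cleanly than you anticipate. Use $\dim\mathcal{B}_{e'}=\frac14\sum_k c_k^2-\frac14\#\{\text{odd rows}\}-\frac n2$, and note that each column pair of $\mathcal{O}'$ produces exactly one odd row. Then $\dim\mathcal{B}_{e'}=2\sum_i\alpha_{2i}^2+2\sum_j\alpha_{2j-1}(\alpha_{2j-1}-1)=N(W^\flat)$, with no cross terms between pairs. There cannot be any, since $N(W^\flat)$ has none.

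\textbf{What each buys.} Your route gives a conceptual reason why the degrees match. It does not let you avoid the combinatorics, because $b$ alone does not determine $E$. Already for $\mathcal{O}'=(3,2)$, both $2\times\emptyset$ and $\emptyset\times 11$ in $W(B_2)^{\wedge}$ have $b=2$. So your restriction step still needs the explicit Springer bipartition of $\mathcal{O}'$, which is the very computation the paper's proof rests on, plus a Littlewood--Richardson check.

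\textbf{Type $D$.} Here $W(B_{\alpha_{2i}})$ is not a subgroup of $W(D_{n/2})$. So the reflection count and the Macdonald characterization can only be read formally, just as $W^\flat$ in the paper's statement.
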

\begin{proof}
By the standard calculation of truncated induction for classical Weyl groups, one has
    $$j_{W^{\flat}}^{W}(\mathrm{sgn}) = (\cdots,\alpha_{2j-1}, \alpha_{2j-1}, \cdots, \alpha_1, \alpha_1) \times(\cdots, \alpha_{2i}, \alpha_{2i}, \cdots, \alpha_2,\alpha_2, \alpha_0,\alpha_0).$$ 
   In particular, $j_{W^{\flat}}^{W}(\mathrm{sgn})$ is irreducible.  
    
    Under the Springer correspondence, the above Weyl group representation corresponds precisely to the nilpotent orbit $\mathcal O'$ whose column partition is given in Equation \eqref{eq:oprime}.
 Thus, the result follows.  
\end{proof}

\section{Proof of Main Theorem} 
\subsection{Genuine Unipotent Representations - A Uniqueness Theorem} \label{sec:unique} In this section, we assume $\widetilde{G'} = \mathrm{Spin}_{2n+1}(\mathbb{C})$ or $\mathrm{Spin}_{2n}(\mathbb{C})$, and $\mathcal{O}' \subseteq \mathfrak{g}' = \mathfrak{so}_{2n+1}(\mathbb{C})$ or $\mathfrak{so}_{2n}(\mathbb{C})$ is a nilpotent orbit given by Equation \eqref{eq:oprime}. By our discussions in the previous section, the lift of special unipotent representations has infinitesimal character equal to 
$\frac{{}^{\vee}h}{4}$ as given in Equation \eqref{eq-h/4}. Note that for any $\beta \in \mathfrak{B}_{i}^{\pm}$ and $\delta \in \mathfrak{D}_j^{\mp}$, one always has
$\beta \equiv \delta (\mathrm{mod}\ \mathbb{N})$ (cf. Equation \eqref{eq-parity}). Consider the block consisting of all genuine representations with infinitesimal character $\frac{{}^{\vee}h}{4}$:
$$\mathcal{B} =  \mathrm{Span}\left\{\
    J(w,w') := J\left(\begin{matrix}\bigsqcup_{i \geq 0} \mathfrak{B}_{i}^+ \sqcup \bigsqcup_{j \geq 1} \mathfrak{D}_j^- \\ w(\bigsqcup_{i \geq 0} \mathfrak{B}_{i}^- \sqcup \bigsqcup_{j \geq 1}\mathfrak{D}_j^+)\end{matrix}  ;\ \begin{matrix}\bigsqcup_{i \geq 0} \mathfrak{B}_{i}^- \sqcup \bigsqcup_{j \geq 1} \mathfrak{D}_j^+ \\ w'(\bigsqcup_{i \geq 0} \mathfrak{B}_{i}^+ \sqcup \bigsqcup_{j \geq 1}\mathfrak{D}_j^-)\end{matrix}  \right) \ \Bigg|\ w, w' \in W_{\frac12} \right\}$$
for $\widetilde{G'} = \mathrm{Spin}_{2n+1}(\mathbb{C})$, and
\begin{align*}
\mathcal{B} =  \mathrm{Span}\left\{ J(w,w')  \ |\ w, w' \in W_{\frac12} \right\}, \quad \mathcal{B}^{\vee} = \mathrm{Span}\left\{J(w,w')^{\vee}\ |\ w, w' \in W_{\frac12}\right\}
\end{align*}
for $\widetilde{G'} = \mathrm{Spin}_{2n}(\mathbb{C})$ (here $^{\vee}$ is as defined in Example \ref{eg:rho/2}). Coherent continuation gives a $(W_{\frac12} \times W_{\frac12})$-module structure of these blocks.

\medskip
As discussed in the previous section, we make the following:
\begin{definition}
Let $\mathcal{O}' \subseteq \mathfrak{g}'$ be the nilpotent orbit given in Equation \eqref{eq:oprime}. The {\bf genuine unipotent representations} attached to $\mathcal{O}'$ are defined by:
    \begin{align*}
    \widetilde{\mathcal{U}}(\mathcal{O}') = \{J(w,w')\ |\ \mathcal{V}(\mathrm{LAnn}(J(w,w'))) = \mathcal{V}(\mathrm{RAnn}(J(w,w')))  = \overline{\mathcal{O}'} \}
    \end{align*}
(cf. Theorem \ref{thm:BV}(d)). Consequently, the wavefront set of each $J(w,w') \in  \widetilde{\mathcal{U}}(\mathcal{O}')$ is equal to $\overline{\mathcal{O}'} = \mathcal{V}(I(\lambda_{\mathcal{O}'}))$, and all other genuine representations have associated variety strictly bigger than $\overline{\mathcal{O}'}$ in the closure ordering of nilpotent orbits.
\end{definition}

Note that 
$\widetilde{\pi}(\mathcal{O}') = J(e,e)$
using the notations in Theorem \ref{thm:preunique}. Also, $\widetilde{\pi}(\mathcal{O}'), \widetilde{\pi}(\mathcal{O}')^{\vee} \in \widetilde{\mathcal{U}}(\mathcal{O}')$ are unitarizable representations by the main results of \cite{Br99} for $\mathrm{Spin}_{2n}(\mathbb{C})$ and \cite[Chapter 7]{B17} for $\mathrm{Spin}_{2n+1}(\mathbb{C})$. 
The main theorem in this section is, these representations exhausts all possibilities of $\widetilde{\mathcal U}(\mathcal O')$:
\begin{theorem} \label{thm:unique}
Let $\widetilde{G'}=\mathrm{Spin}_{2n+1}(\mathbb{C})$ or $\mathrm{Spin}_{2n}(\mathbb{C})$, and let $\mathcal{B}$ be  the block of genuine representations of $\widetilde{G'}$ as given above. Among all genuine irreducible representations $J(w,w')$ in $\mathcal{B}$, $J(e,e)$ is the only unipotent representation. In the case 
$\widetilde{G'}=\mathrm{Spin}_{2n}(\mathbb C)$,  the corresponding statement  for $\mathcal B^\vee$ follows by outer automorphism. 
Consequently,
    $$\widetilde{\mathcal{U}}(\mathcal{O}') = \begin{cases} \{\widetilde{\pi}(\mathcal{O}')\}
&\textrm{for}\ \widetilde{G'} = \mathrm{Spin}_{2n+1}(\mathbb{C}) \\ 
\{\widetilde{\pi}(\mathcal{O}'), \widetilde{\pi}(\mathcal{O}')^{\vee}\} &\textrm{for}\ \widetilde{G'} = \mathrm{Spin}_{2n}(\mathbb{C}) \end{cases}.$$
\end{theorem}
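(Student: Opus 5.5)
We plan to use the standard Barbasch--Vogan machinery for complex groups. The block $\mathcal{B}$ is a $(W_{\frac12}\times W_{\frac12})$-module under coherent continuation. It is isomorphic to the coherent continuation representation at a regular integral infinitesimal character for the integral root system of $\frac{{}^\vee h}{4}$, i.e.\ to $\mathbb{C}[W_{\frac12}]$ as a bimodule. The first step is to identify $W_{\frac12}$ concretely. Because all coordinates of $\frac{{}^\vee h}{4}$ are congruent mod $\mathbb{Z}$ after the grouping in Equation \eqref{eq-parity}, $W_{\frac12}$ is the integral Weyl group, of type $B_{n/2}$ or $D_{n/2}$. Under this identification, the irreducible $J(w,w')$ are indexed by pairs, as in the definition of $\mathcal{B}$.

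For such a block, the left annihilator variety of $J(w,w')$ is determined by the left cell of $w'$ (or $w$) in $W_{\frac12}$. More precisely, if $\sigma$ is the special representation of $W_{\frac12}$ attached to that two-sided cell, then $\mathcal{V}(\mathrm{LAnn}(J(w,w')))$ is the orbit $\mathcal{O}_\sigma \subseteq \mathfrak{g}'$ whose Springer representation is $j_{W_{\frac12}}^{W'}(\sigma)$. Here $W'$ is the full Weyl group of $\mathfrak{g}'$, and $j$ is truncated induction from the integral Weyl group (Barbasch--Vogan, \cite[Prop.~5.x]{BV85}). The analogous statement holds for the right annihilator and $w$.

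Unipotence, i.e.\ annihilator varieties equal to $\overline{\mathcal{O}'}$, is therefore a condition on cells. We combine the identity $j_{W^{\flat}}^{W}(\mathrm{sgn})=\mathrm{Spr}(\mathcal{O}')$ of Proposition \ref{prop-whw'} with transitivity of truncated induction. This shows that the relevant $\sigma$ must satisfy $j_{W_{\frac12}}^{W'}(\sigma)=\mathrm{Spr}(\mathcal{O}')$. Since $\mathcal{O}'$ is the minimal orbit attainable at this infinitesimal character, the only such $\sigma$ should be $\mathrm{sgn}$ of $W_{\frac12}$; this means the two-sided cell is $\{w_0\}$. We will check this through an explicit symbol computation with Lusztig's $a$-function. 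Truncated induction preserves the $b$/$a$-values, so minimal orbit dimension forces maximal $a$-value in $W_{\frac12}$, which is attained only by $\mathrm{sgn}$. Comparing dimensions, $\dim \mathcal{O}' = |\Delta'| - |\Delta_{\frac12}|$ by the same $j$-computation, pins this down.

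The sign cell is the singleton $\{w_0\}$, which in our normalization of parameters is the case $w=w'=e$: the antidominant/dominant pairing gives the smallest representation. Hence $J(e,e)=\widetilde{\pi}(\mathcal{O}')$ is the unique member of $\mathcal{B}$ in $\widetilde{\mathcal{U}}(\mathcal{O}')$. Applying the outer automorphism transfers the result to $\mathcal{B}^\vee$. No genuine representation with this infinitesimal character lies outside $\mathcal{B}\cup\mathcal{B}^\vee$, since the genuineness condition forces the parameters into these shapes.

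The main obstacle is the step showing that $j_{W_{\frac12}}^{W'}(\sigma)=\mathrm{Spr}(\mathcal{O}')$ forces $\sigma=\mathrm{sgn}$. Truncated induction need not be injective on special representations, and the identification of the annihilator variety uses the integral Weyl group, which for half-integral parameters in type $B$ is of type $D$-like behaviour, so care is needed. Our first approach is the $a$-value/dimension comparison above. If that fails, we will fall back on an explicit Lusztig symbol calculation for the classical groups.
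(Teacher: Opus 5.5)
There is a genuine gap. The general toolkit you invoke (cells, truncated induction, $a$-values) is the right one, but the proposal misidentifies the structure of the block, and the cell it singles out is not the relevant one.

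\textbf{The block is misidentified.} The coordinates of $\frac{{}^\vee h}{4}$ are \emph{not} all congruent mod $\mathbb Z$. By Equation \eqref{eq-parity} there are $n/2$ integral ones (from $\mathfrak B_i^+,\mathfrak D_j^-$) and $n/2$ half-integral ones (from $\mathfrak B_i^-,\mathfrak D_j^+$). Consequences:
\begin{itemize}
\item The integral Weyl group of $\frac{{}^\vee h}{4}$ in $\mathfrak g'$ is $W_{\frac12}\times W_{\frac12}$, not $W_{\frac12}$.
\item This is why $\mathcal B$ is indexed by pairs $(w,w')$ and splits into half-blocks $\mathcal B_{\frac12},\mathcal B_{\frac12}'$ for $\widetilde G'_{\frac12}$.
\item The annihilator variety of $J(w,w')$ is therefore $\mathrm{Spr}^{-1}$ of $j_{W_{\frac12}\times W_{\frac12}}^{W}\bigl(\mathrm{Spr}(\mathcal O_L(w))\otimes\mathrm{Spr}(\mathcal O_L(w'))\bigr)$ (the paper uses \cite[Theorem 5.1]{MG94}). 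It is not a truncated induction from a single rank-$n/2$ group.
\end{itemize}

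\textbf{The parameters are singular, so the sign cell is absent.} The parameters have repeated entries and zeros, e.g.\ $\lambda_L=(4;3;2,2,2;1,1,1;0,0)$ and $\lambda_R=(\frac72;\frac52,\frac52;\dots)$ in Example \ref{eg-P}. Hence the block is not a copy of $\mathbb C[W_{\frac12}]$. The sign cell does not occur at all: it corresponds to modules of finite-dimensional type (full $\tau$-invariant, annihilator variety $\{0\}$), and these vanish at singular infinitesimal character. The largest $a$-value among the cells actually present in $\mathcal B_{\frac12}$ belongs to $\mathrm{Spr}(\mathcal P)=j_{W^\flat_{\frac12}}^{W_{\frac12}}(\mathrm{sgn})$ (Lemma \ref{lem-P}, Remark \ref{rmk:wh}), not to $\mathrm{sgn}$.

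Your dimension check fails accordingly. In Example \ref{eg:161284}, $\dim\mathcal O'=696=|\Delta'|-|\Delta(W^\flat)|=800-104$. By contrast, $|\Delta'|-|\Delta_{\frac12}|=600$, and it is $400$ with the true integral root system. So ``$j(\sigma)=\mathrm{Spr}(\mathcal O')$ forces $\sigma=\mathrm{sgn}$'' is a false target, and the Lusztig-symbol fallback cannot rescue it.

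\textbf{The uniqueness count is missing.} Even with the orbit identified correctly, a step is missing. At singular infinitesimal character, knowing which special representation (two-sided cell) realizes the minimal orbit does not tell you how many irreducibles in the block have that annihilator variety. The paper settles this in the half-block, as follows.
\begin{itemize}
\item By \cite[Corollary 5.24]{BV85}, the number of $J(w)\in\mathcal B_{\frac12}$ with variety $\overline{\mathcal P}$ equals the number of irreducible constituents common to $V^L(w_L)$ and $V^L(w_R)$. These are the left cell representations of the longest elements of the stabilizers of $\lambda_L$ and $\lambda_R$.
\item Proposition \ref{prop:uniquehalf} shows this intersection is only $\mathrm{Spr}(\mathcal P)$. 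It uses $V^L(w_L)=\{\mathrm{Spr}(\mathcal Q_I)\}$ (\cite[Proposition 5.28]{BV85}).
\item A Littlewood--Richardson check then shows that no $\mathrm{Spr}(\mathcal Q_I)$ with $I\neq\emptyset$ occurs in the representation induced from the type-$A$ stabilizer of $\lambda_R$.
\end{itemize}
Only after this do additivity of $a$-values under $j$-induction and McGovern's gluing force $w=w'=e$. Then $j$-induction in stages and Proposition \ref{prop-whw'} identify the resulting orbit with $\mathcal O'$. Your proposal has no counterpart of this counting step. Even a corrected version would show only that the unipotent modules lie in a single cell, not that there is exactly one of them.
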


Rather than studying the $W_{\frac12} \times W_{\frac12}$-module $\mathcal{B}$ directly, let
$$
    \widetilde{G}_{\frac12}' = \begin{cases} \mathrm{Spin}_{n+1}(\mathbb{C}) &\textrm{for}\  \widetilde{G'} = \mathrm{Spin}_{2n+1}(\mathbb{C}) \\
    \mathrm{Spin}_{n}(\mathbb{C}) &\textrm{for}\  \widetilde{G'} = \mathrm{Spin}_{2n}(\mathbb{C})\end{cases}
$$
and consider the `half-blocks' of $\mathcal{B}$ for $\widetilde{G}_{\frac12}'$:
$$\mathcal{B}_{\frac12} :=  \mathrm{Span}\left\{\
      J(w):= J\left(\begin{matrix}\bigsqcup_{i \geq 0} \mathfrak{B}_{i}^+ \sqcup \bigsqcup_{j \geq 1} \mathfrak{D}_j^- \\ w(\bigsqcup_{i \geq 0} \mathfrak{B}_{i}^- \sqcup \bigsqcup_{j \geq 1}\mathfrak{D}_j^+)\end{matrix}\right) \ \Bigg|\ w \in W_{\frac12} \right\},$$
$$\mathcal{B}_{\frac12}' :=  \mathrm{Span}\left\{\
       J(w)' := J\left(\begin{matrix}\bigsqcup_{i \geq 0} \mathfrak{B}_{i}^- \sqcup \bigsqcup_{j \geq 1} \mathfrak{D}_j^+ \\ w'(\bigsqcup_{i \geq 0} \mathfrak{B}_{i}^+ \sqcup \bigsqcup_{j \geq 1}\mathfrak{D}_j^-)\end{matrix}\right) \ \Bigg|\ w' \in W_{\frac12} \right\}$$
One would like to understand the $W_{\frac12}$-module structure of the coherent continuation representation in $\mathcal{B}_{\frac12}$ and $\mathcal{B}_{\frac12}'$. Note that the irreducibles in $\mathcal{B}_{\frac12}'$ are just the contragredients of $\mathcal{B}_{\frac12}$, so one may further reduce to studying the block $\mathcal{B}_{\frac12}$. 

\begin{lemma} \label{lem-P}
Let $\lambda_L$, $\lambda_R$ be the dominant forms of $\bigsqcup_{i \geq 0} \mathfrak{B}_{i}^+ \sqcup \bigsqcup_{j \geq 1} \mathfrak{D}_j^-$ and $\bigsqcup_{i \geq 0} \mathfrak{B}_{i}^- \sqcup \bigsqcup_{j \geq 1}\mathfrak{D}_j^+$ respectively. Then the annihilator ideals $I(\lambda_L), I(\lambda_R)$ of $U(\mathfrak{g})$ of the irreducible Verma modules with highest weights $\lambda_L-\rho$ and $\lambda_R-\rho$ respectively satisfy:
$$\mathcal{V}(I(\lambda_L)) = \mathcal{V}(I(\lambda_R)) = \overline{\mathcal{P}}$$ 
where
\begin{equation} \label{eq-Pdef}
\mathcal{P} := 
\begin{cases}
(2\alpha_{2q}+1, 2\alpha_{2q-1}-1,   \dots, 2\alpha_2+1, 2\alpha_1-1, 2\alpha_0+1) &\text{if } \widetilde{G}_{\frac12}' = \mathrm{Spin}_{n+1}(\mathbb{C}); \\
(2\alpha_{2q+1}, 2\alpha_{2q}, \dots, 2\alpha_0)&\text{if } \widetilde{G}_{\frac12}' = \mathrm{Spin}_{n}(\mathbb{C})
\end{cases}
\end{equation}
\end{lemma}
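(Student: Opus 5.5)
The plan is to use the standard description of the associated variety of a primitive ideal with (possibly non-integral) infinitesimal character: $I(\lambda)$ is the maximal primitive ideal with that infinitesimal character. Its associated variety is the closure of the Lusztig--Spaltenstein induced orbit
$$\overline{\mathrm{Ind}_{\mathfrak l_\lambda}^{\mathfrak g}(0)},$$
where $\mathfrak l_\lambda$ is a Levi subalgebra dual to the integral root system. Equivalently, the associated variety corresponds under Springer to $j_{W_\lambda}^{W}(\mathrm{sgn})$, where $W_\lambda$ is the integral Weyl group of $\lambda$ (cf. \cite[Proposition 5.10]{BV85}). So I first compute the integral root systems of $\lambda_L$ and $\lambda_R$ inside $\mathfrak{g}'_{\frac12}$, of type $B$ (for $\mathrm{Spin}_{n+1}$, rank $n/2$) or $D$ (for $\mathrm{Spin}_n$, rank $n/2$). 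Then I apply truncated induction of the sign representation, and finally read off the Springer orbit.

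Step 1 is to determine the integral Weyl groups. Both $\lambda_L$ and $\lambda_R$ have all coordinates in $\mathbb Z$ or all in $\frac12+\mathbb Z$, by \eqref{eq-parity}.

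\begin{itemize}
\item \emph{Type $D$ ($\mathrm{Spin}_n$).} All roots $\pm e_i\pm e_j$ are integral, so $W_\lambda = W$, and the relevant datum is the singular stabilizer. The coordinate multiset of $\lambda_L$ is $\bigsqcup_i(\alpha_{2i},\dots,1)\sqcup\bigsqcup_j(\alpha_{2j-1}-1,\dots,0)$, and similarly for $\lambda_R$. For a singular integral $\lambda$, the maximal primitive ideal has associated variety Richardson, namely $j_{W_\lambda^0}^{W}(\mathrm{sgn})$ composed with the Springer map, i.e. the closure of $\mathrm{Ind}_{\mathfrak l}^{\mathfrak g}(0)$ for the Levi $\mathfrak l$ whose Weyl group $W^0_\lambda$ is the stabilizer of $\lambda$. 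Reading multiplicities of each absolute value of a coordinate (with $0$ handled separately) gives the stabilizer, and Richardson induction in type $D$ is computed by the usual column recipe.
\item \emph{Type $B$ ($\mathrm{Spin}_{n+1}$, with $\mathfrak g'_{\frac12}=\mathfrak{so}_{n+1}$ of type $B_{n/2}$).} For integral coordinates, $W_\lambda$ is all of $W(B)$. For half-integral $\lambda_R$, however, the short roots $e_i$ fail to be integral, so $W_\lambda = W(D_{n/2})$.
\end{itemize}

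Step 2 is the truncated induction. In the integral case with stabilizer $W^0=\prod S_{m_k}\times W(X_{m_0})$, the sign representation is induced in stages. Since the coordinates form "staircases" $(\alpha,\dots,1)$ and $(\alpha-1,\dots,0)$, the multiplicity of a value $c\ge1$ is $\#\{i:\alpha_{2i}\ge c\}+\#\{j:\alpha_{2j-1}-1\ge c\}$. This is exactly the data whose Lusztig--Spaltenstein induction produces a partition with columns built from the $2\alpha_k\pm1$ or $2\alpha_k$. The standard computation (symbols, adding columns two at a time per $\mathrm{GL}$ factor) should yield the column partition $\mathcal P$ of \eqref{eq-Pdef}. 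In the half-integral type $B$ case I first compute $j_{W^0}^{W(D)}(\mathrm{sgn})$ and then $j_{W(D)}^{W(B)}$ of it. Since everything is $j$-induced, this is transitive and equals $j_{W^0}^{W(B)}(\mathrm{sgn})$.

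Step 3 is to check that $\lambda_L$ and $\lambda_R$ give the same answer. The point is that $\lambda_R$ is essentially obtained from $\lambda_L$ by the shift $\mathfrak B^+\leftrightarrow\mathfrak B^-$, $\mathfrak D^-\leftrightarrow\mathfrak D^+$. This shift exchanges a block $(\alpha,\dots,1)$ with $(\alpha-\frac12,\dots,\frac12)$: the stabilizer pattern is the same, and only the roles of $0$ versus non-integrality differ. Checking that both produce $\mathcal P$ reduces to comparing symbols. As a sanity check, I will verify $\mathcal O=(4m)$, where $\mathcal P$ should be the orbit attached to the $\rho/2$-type character.

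The main obstacle I expect is the bookkeeping in type $B$, where one of $\lambda_L,\lambda_R$ is integral and the other half-integral. There the associated varieties are computed by genuinely different inductions, $\mathrm{Ind}$ from a type-$B$ Levi versus a Levi inside $D_{n/2}$ followed by $j_{W(D)}^{W(B)}$. Equality of the resulting orbits, including the parity corrections ($B$-collapse) that produce the alternating $\pm1$ in \eqref{eq-Pdef}, must be verified carefully via Lusztig symbols rather than naive partition addition.
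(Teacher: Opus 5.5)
There is a genuine gap: the formula you use for $\mathcal V(I(\lambda))$ is wrong at singular infinitesimal character, and $\lambda_L,\lambda_R$ are singular in general.

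Your first formula, $\mathrm{Spr}(\mathcal V(I(\lambda)))=j_{W_\lambda}^{W}(\mathrm{sgn})$, is only valid for regular $\lambda$. Your replacement for singular integral $\lambda$ is $\overline{\mathrm{Ind}_{\mathfrak l}^{\mathfrak g}(0)}$, with $\mathfrak l\subseteq\mathfrak g$ the Levi of the stabilizer $W^0_\lambda$. This is off by a sign twist. For integral $\lambda$ the correct statement is $\mathrm{Spr}(\mathcal V(I(\lambda)))=j_{W^0_\lambda}^{W}(\mathrm{sgn})\otimes\mathrm{sgn}$. Equivalently, $\mathcal V(I(\lambda))=\overline{d\big(\mathrm{Ind}_{{}^\vee\mathfrak l}^{{}^\vee\mathfrak g}(0)\big)}$, where ${}^\vee\mathfrak l$ is the Levi of the dual algebra whose roots are dual to the singular roots of $\lambda$ and $d$ is BVLS duality. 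This is what the paper uses.

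Your recipe already fails for $\lambda=\rho$: it gives the principal orbit, although $L(0)$ is trivial. It also fails in the sanity check you propose. For $\mathcal O=(4m)$ one has $\lambda_L=(m,\dots,1)$ and $\lambda_R=(m-\frac12,\dots,\frac12)=\rho_{B_m}$. Both are regular and integral for $\mathfrak{so}_{2m+1}$, so $L(\lambda_\bullet-\rho)$ are the spin and trivial modules and $\mathcal V(I(\lambda_\bullet))=\{0\}$. This is $\mathcal P=(2m+1)$ read as a column, whereas your recipe returns the principal orbit.

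In general your recipe produces the orbit whose Springer representation is $\mathrm{Spr}(\mathcal P)\otimes\mathrm{sgn}$, not $\mathcal P$. In Example \ref{eg-P}, the stabilizer Levi of $\lambda_L$ is $\mathfrak{gl}_1^2\times\mathfrak{gl}_3^2\times\mathfrak{so}_5$, and its Richardson orbit has \emph{rows} $(9,5,5,1,1)$. By contrast, $\mathcal P$ has \emph{columns} $(9,5,5,1,1)$. Your phrase ``columns built from $2\alpha_k\pm1$'' hides exactly this row/column slip.

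There are two further errors.

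First, in type $B$ a half-integral weight is integral. The coroot of $e_i$ is $2e_i$, so $\langle\lambda,e_i^\vee\rangle=2\lambda_i\in\mathbb Z$; $\rho_{B_k}$ is the standard example. Hence $W_{\lambda_L}=W_{\lambda_R}=W_{\frac12}$ in both types, as the paper states, and the obstacle you anticipate does not exist. You have transplanted the type $C$ phenomenon, where half-integral weights have integral Weyl group $W(D)$. Under the correct sign-twisted formula this mistake would not be harmless: with $W_\lambda=W(D_m)$, the weight $\lambda_R=\rho_{B_m}$ would give $j_{W(D_m)}^{W(B_m)}(\mathrm{sgn})\neq\mathrm{sgn}$, i.e. a nonzero orbit.

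Second, the stabilizers of $\lambda_L$ and $\lambda_R$ do not have the same shape. In Example \ref{eg-P} they are of type $A_0^2\times A_2^2\times B_2$ and $A_0\times A_1\times A_2\times A_3$ respectively. So the equality $\mathcal V(I(\lambda_L))=\mathcal V(I(\lambda_R))$ only appears after inducing and collapsing; it is not a consequence of matching stabilizer patterns.

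The repair is the paper's route:
\begin{itemize}
\item Compute the Richardson orbits ${}^\vee\mathcal P_\bullet=\mathrm{Ind}_{{}^\vee\mathfrak m_\bullet}^{{}^\vee\mathfrak g'_{\frac12}}(0)$ in $\mathfrak{sp}_n$ (resp. $\mathfrak{so}_n$).
\item Check that they coincide. In the symplectic case both have rows $(2\alpha_{2q},\dots,2\alpha_1,2\alpha_0)$, for $\lambda_L$ after $C$-collapse.
\item Apply $d$ to obtain $\mathcal P$.
\end{itemize}
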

\begin{proof}
The integral Weyl group of $\lambda_L$ and $\lambda_R$ are $W_{\lambda_L} = W_{\lambda_R} = W_{\frac12}$, and the reflections in $W_{\frac12}$ stabilizing $\lambda_L$ and $\lambda_R$ forms the Levi subgroups $W_{\frac12}^{\lambda_L}$ and $W_{\frac12}^{\lambda_R}$ respectively.

Let $w_L \in W_{\frac12}^{\lambda_L}$ and $w_R \in W_{\frac12}^{\lambda_R}$ be the longest elements in the stabilizer subgroups, and $V^L(w_L)$, $V^L(w_R)$ be the left cell representations of $W_{\frac12}$, each containing a unique special representation of multiplicity one:
$$\sigma_L \leq V^L(w_L), \quad \quad \sigma_R \leq V^L(w_R)$$
Then one of the main results in \cite{BV82} and \cite{BV83} implies that 
$\mathcal{V}(I(\lambda_{\bullet}))  = \overline{\mathcal{P}_{\bullet}}$ such that $\mathrm{Spr}(\mathcal{P}_{\bullet}) = \sigma_{\bullet}$ for $\bullet = L$ or $R$. Therefore, it suffices to show that
$$\sigma_L = \sigma_R = \mathrm{Spr}(\mathcal{P}).$$
This can be proved directly by noting that (cf. \cite[Proposition 3.15]{BV83})
$$V^L(w_L) = j_{W_{\frac12}^{\lambda_L}}^{W_{\frac12}}(\mathrm{triv}), \quad \quad V^L(w_R) =j_{W_{\frac12}^{\lambda_R}}^{W_{\frac12}}(\mathrm{triv})$$
(see Proposition \ref{prop:uniquehalf} below for some explicit left cell computations). 
Alternatively, consider the BVLS dual of $\mathcal{P}_{\bullet}$ (Equation \eqref{eq-BVLS}):
$$\mathcal{P}_{\bullet} \longleftrightarrow {}^{\vee}\mathcal{P}_{\bullet}:=\mathrm{Ind}_{{}^{\vee}\mathfrak{m}_{\bullet}}^{{}^{\vee}\mathfrak{g}_{\frac12}'}(0),$$
which is a Richardson orbit whose Levi subalgebra ${}^{\vee}\mathfrak{m}_{\bullet}$ corresponds to the singular roots of $\lambda_{\bullet}$ in ${}^{\vee}\mathfrak{g}_{\frac12}'$. Then one has 
$$\mathrm{Spr}(\mathcal{P}_{\bullet}) \otimes \mathrm{sgn} = \mathrm{Spr}({}^{\vee}\mathcal{P}_{\bullet}).$$ 
One can directly check that ${}^{\vee}\mathcal{P}_{L} = {}^{\vee}\mathcal{P}_{R}$, and its dual is equal to $\mathcal{P}$ as given in the lemma.
\end{proof}

\begin{example} \label{eg-P}
We continue with Example \ref{eg:161284}, where $\widetilde{G}_{\frac12}' = \mathrm{Spin}_{21}(\mathbb{C})$ and 
\begin{align*}
\mathfrak{B}_2^+ \sqcup \mathfrak{D}_2^- \sqcup \mathfrak{B}_1^+ \sqcup \mathfrak{D}_1^- &= (4,3,2,1;\ 2,1,0;\ 2,1;\ 0) \ \longrightarrow \ \lambda_L = (4;\ 3;\ 2,2,2;\ 1,1,1;\ 0,0)\\
\mathfrak{B}_2^- \sqcup \mathfrak{D}_2^+ \sqcup \mathfrak{B}_1^- \sqcup \mathfrak{D}_1^+ &= \left(\frac{7}{2},\frac{5}{2},\frac{3}{2},\frac{1}{2};\ \frac{5}{2}, \frac{3}{2}, \frac{1}{2};\ \frac{3}{2}, \frac{1}{2};\ \frac{1}{2}\right) \ \longrightarrow \ \lambda_R = \left(\frac{7}{2};\ \frac{5}{2},\frac{5}{2};\ \frac{3}{2}, \frac{3}{2}, \frac{3}{2};\ \frac{1}{2}, \frac{1}{2},  \frac{1}{2}, \frac{1}{2}\right).
\end{align*} 
Then $\mathfrak{m}_L$ (and $W_{\frac12}^{\lambda_L}$) correspond to the subroot system of type $A_0 \times A_0 \times A_2 \times A_2 \times B_2$, while
$\mathfrak{m}_R$ (and $W_{\frac12}^{\lambda_R}$) correspond to the subroot system of type $A_0 \times A_1 \times A_2 \times A_3$. 
In both cases, Richardson orbits in ${}^{\vee}\mathfrak{g}_{\frac12}' = \mathfrak{sp}_{20}(\mathbb{C})$ are equal to 
\begin{align*}
{}^{\vee}\mathcal{P}_{L} &= \mathrm{Ind}_{{}^{\vee}\mathfrak{m}_{L}}^{{}^{\vee}\mathfrak{g}_{\frac12}'}(0) = (4,3,3,3,3,1,1,1,1)_C \\
{}^{\vee}\mathcal{P}_{R} &= \mathrm{Ind}_{{}^{\vee}\mathfrak{m}_{R}}^{{}^{\vee}\mathfrak{g}_{\frac12}'}(0) =(4,4,3,3,2,2,1,1)_C
\end{align*}
where $\pi_C$ is the $C$-collapse of the partition $\pi$ so that it defines a nilpotent orbit of type $C$. In both cases, ${}^{\vee}\mathcal{P} = (4,4,3,3,2,2,1,1)$ and hence
$\mathcal{P} = \mathrm{Spr}^{-1}(\mathrm{Spr}({}^{\vee}\mathcal{P}) \otimes \mathrm{sgn}) =(9,5,5,1,1)$ as stated in the Lemma.
\end{example}

\begin{remark} \label{rmk:wh}
    Recall the Weyl subgroup $W_{\frac12}^{\flat}$ of $W_{\frac12}$ given in Proposition \ref{prop-whw'}. One can check by direct computation as in Proposition \ref{prop-whw'} that the nilpotent orbit $\mathcal{P}$ satisfies
    \begin{center}
    $\mathrm{Spr}(\mathcal{P}) = j_{W_{\frac12}^{\flat}}^{W_{\frac12}}(\mathrm{sgn})$. 
    \end{center}
Indeed, the truncated induced module $j_{W_{\frac12}^{\flat}}^{W_{\frac12}}(\mathrm{sgn})$ is irreducible with
$$j_{W_{\frac12}^{\flat}}^{W_{\frac12}}(\mathrm{sgn}) = (\cdots, \alpha_{2j-1}, \cdots, \alpha_3,\alpha_1) \times(\cdots, \alpha_{2j}, \cdots, \alpha_4, \alpha_2,\alpha_0).$$
\end{remark}

Now we go back to studying $\mathcal{B}_{\frac12}$ -- among all irreducible representations in $\mathcal{B}_{\frac12}$, let 
    $$\widetilde{\mathcal{U}}(\mathcal{P}) := \{J(w)\ |\  \mathcal{V}\mathrm{(LAnn}(J(w))) = \mathcal{V}\mathrm{(RAnn}(J(w))) = \overline{\mathcal{P}}\}$$
be the collection of irreducible representations with the smallest possible associated variety $\overline{\mathcal{P}}$.

\begin{proposition} \label{prop:uniquehalf}
Retaining the above settings, one has:
    $$\widetilde{\mathcal{U}}(\mathcal{P}) = \{J(e)\}.$$
\end{proposition}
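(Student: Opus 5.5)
We will argue through coherent continuation, left cells and the Barbasch--Vogan description of associated varieties, working throughout in the half-block $\mathcal{B}_{\frac12}$ of $\widetilde{G}_{\frac12}'$.

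\emph{Step 1: reduction to a cell count.} The irreducibles $J(w)$, $w\in W_{\frac12}$, are indexed by the integral Weyl group $W_{\frac12}$. The module $\mathcal{B}_{\frac12}$ carries a $W_{\frac12}$-action by coherent continuation; the left parameter $\lambda_L$ is fixed and only the right parameter varies. Following \cite{BV82}, \cite{BV83}, the left annihilator of $J(w)$ is a primitive ideal $I(w\cdot\lambda_R)$ at the infinitesimal character $\lambda_R$. Its associated variety is $\overline{\mathcal{O}_w}$, where $\mathrm{Spr}(\mathcal{O}_w)$ is the special representation of the left cell of $W_{\frac12}$ containing $w$, restricted to elements compatible with the singular set of $\lambda_R$. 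By Lemma \ref{lem-P}, the minimal possible variety is $\overline{\mathcal{P}}$. It is attained exactly by those $w$ lying in the left cells whose special representation is $\mathrm{Spr}(\mathcal{P})$; in particular this holds for $w=e$ up to the singular stabilizer $W_{\frac12}^{\lambda_R}$. The same analysis applies on the right, with $\lambda_L$ and $W_{\frac12}^{\lambda_L}$. So $J(w)\in\widetilde{\mathcal{U}}(\mathcal{P})$ iff both cell conditions hold simultaneously.

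\emph{Step 2: counting via the coherent continuation representation.} By standard theory (the analogue of \cite[Proposition 3.15]{BV83} and the complex-group version of Barbasch--Vogan), the span of the irreducibles in $\mathcal{B}_{\frac12}$ with associated variety $\overline{\mathcal{P}}$ carries a $W_{\frac12}$-representation whose special constituent is $\sigma=\mathrm{Spr}(\mathcal{P})$. Its number of irreducibles is bounded by the multiplicity of $\sigma$ in
$$\mathrm{Ind}_{W_{\frac12}^{\lambda_L}}^{W_{\frac12}}(\mathrm{triv})\otimes\mathrm{Ind}_{W_{\frac12}^{\lambda_R}}^{W_{\frac12}}(\mathrm{triv}),$$
paired appropriately. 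More precisely, we will bound it by the number of irreducible constituents of the left cell module, which by Lemma \ref{lem-P} is
$$V^L(w_L)=j_{W_{\frac12}^{\lambda_L}}^{W_{\frac12}}(\mathrm{triv}),$$
and contains $\sigma$ with multiplicity one. The key claim is that $\sigma$ occurs with multiplicity exactly one in both $\mathrm{Ind}_{W^{\lambda_L}_{\frac12}}^{W_{\frac12}}(\mathrm{triv})$ and $\mathrm{Ind}_{W^{\lambda_R}_{\frac12}}^{W_{\frac12}}(\mathrm{triv})$. This is the step where Remark \ref{rmk:wh} enters, since it gives $\sigma=j_{W_{\frac12}^{\flat}}^{W_{\frac12}}(\mathrm{sgn})$ explicitly as a bipartition. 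We then check, via the Macdonald--Lusztig--Spaltenstein description of induced characters of type $B/D$ together with the symbol combinatorics, that $\sigma$ is the \emph{lowest} (in $b$-value) constituent of each induced module. Equivalently, $\mathcal{P}$ is the BVLS dual of the Richardson orbits ${}^{\vee}\mathcal{P}_L={}^{\vee}\mathcal{P}_R$ with trivial Lusztig quotient contribution, as in Example \ref{eg-P}. Multiplicity one then yields at most one irreducible in $\widetilde{\mathcal{U}}(\mathcal{P})$.

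\emph{Step 3: existence.} $J(e)$ has Zhelobenko parameter $(\lambda_L;\lambda_R)$ in dominant position. Its annihilators are exactly $I(\lambda_L)$ and $I(\lambda_R)$, whose varieties are $\overline{\mathcal{P}}$ by Lemma \ref{lem-P}. Hence $J(e)\in\widetilde{\mathcal{U}}(\mathcal{P})$, and the proposition follows.

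The main obstacle is Step 2, namely the multiplicity-one statement. Part of this is verifying that $\sigma$ appears once in the relevant induced representations for arbitrary $\alpha_i$. The other part is that no nonspecial Lusztig-family companion of $\sigma$, which could occur if the family of $\sigma$ is nontrivial, contributes additional irreducibles with variety $\overline{\mathcal{P}}$. We will first attempt this by a direct symbol computation, checking that the family of $\sigma$ meets $j_{W_{\frac12}^{\lambda_L}}^{W_{\frac12}}(\mathrm{triv})\otimes\mathrm{sgn}$ only in $\sigma$. This should follow from the parity interlacing of the $\alpha_i$ visible in Example \ref{eg-P}.
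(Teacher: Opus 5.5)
Your Step 3 agrees with the paper's existence argument. Step 2, however, does not prove uniqueness.

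Multiplicity one of $\sigma=\mathrm{Spr}(\mathcal P)$ in the modules induced from $W_{\frac12}^{\lambda_L}$ and $W_{\frac12}^{\lambda_R}$ only shows that every $J(w)\in\widetilde{\mathcal U}(\mathcal P)$ has left and right annihilators $I(\lambda_L)$ and $I(\lambda_R)$. This is essentially automatic anyway: these are the unique maximal primitive ideals at their infinitesimal characters, so by Borho--Kraft no other primitive ideal there has variety $\overline{\mathcal P}$. It says nothing about how many irreducible modules share that pair of annihilators. By \cite[Corollary 5.24]{BV85}, which is what the paper uses, that number equals the number of irreducible $W_{\frac12}$-representations common to the two left cell modules $V^L(w_L)$ and $V^L(w_R)$. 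These cell modules contain non-special members of the family of $\sigma$ as well as $\sigma$.

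The phenomenon is already visible in type $B_2$ at regular integral infinitesimal character. There the left cell $\{s,ts,sts\}$ meets its inverse in $\{s,sts\}$, giving two irreducible modules with the same pair of annihilators. Likewise, your fallback bound by the number of constituents of $V^L(w_L)$ gives $2^q$, not $1$. The cell module is not the irreducible $j_{W_{\frac12}^{\lambda_L}}^{W_{\frac12}}(\mathrm{triv})$; that is only its special constituent.

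You do flag the non-special family members at the end, but the check you propose cannot work. Read literally, it is vacuous, since a $j$-induced module is irreducible. Read as a statement about the cell module $V^L(w_L)$, it is false. By \cite[Proposition 5.28]{BV85}, $V^L(w_L)=\{\mathrm{Spr}(\mathcal Q_I)\mid I\subseteq\{1,\dots,q\}\}$ consists of $2^q$ members of the family of $\sigma$; in Example \ref{eg-P} these are $31\times42$, $51\times22$, $33\times4$, $53\times2$.

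Uniqueness comes from comparing two \emph{different} cells, and it uses the asymmetry between the parameters. Since $\lambda_R$ has only half-integral coordinates, $W_{\frac12}^{\lambda_R}$ is a product of type $A$ Weyl groups. Hence $V^L(w_R)\subseteq\mathrm{Ind}_{W_{\frac12}^{\lambda_R}}^{W_{\frac12}}(\mathrm{triv})$ can be computed by the Littlewood--Richardson rule. The missing step is to show that no $\mathrm{Spr}(\mathcal Q_I)$ with $I\neq\emptyset$ occurs in that induced module. This gives $V^L(w_L)\cap V^L(w_R)=\{\sigma\}$.

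A smaller slip: in Step 1, the left annihilator of $J(w)$ has infinitesimal character $\lambda_L$, not $\lambda_R$.
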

\begin{proof}
By Proposition \ref{prop:BVprop}, one can check that $J(e) \cong J(\lambda_L; \lambda_R)$. Therefore, it follows from Lemma \ref{lem-P} that:
$$\mathcal{V}\mathrm{(LAnn}(J(e)))  = \mathcal{V}\mathrm{(RAnn}(J(e))) =  \mathcal{V}(I (\lambda_L))= \mathcal{V}(I (\lambda_R)) = \overline{\mathcal P},$$ 
i.e. $J(e) \in \widetilde{\mathcal U}(\mathcal P)$. Therefore, we are left to prove that $J(e)$ is the only representation in $\widetilde{\mathcal{U}}(\mathcal{P})$. 

By \cite[Corollary 5.24]{BV85}, the cardinality of $\widetilde{\mathcal{U}}(\mathcal{P})$ is equal to the number of irreducible representations in 
    \begin{equation} \label{eq-LRcell}
    V^L(w_L) \cap V^L(w_R),
    \end{equation}
    where $w_L, w_R \in W_{\frac12}$ are as given in Lemma \ref{lem-P} (note that $w_R$ is an involution, so that $V^L(w_R) = V^R(w_R)$). By the lemma, one already has $\mathrm{Spr}(\mathcal{P}) \in V^L(w_L) \cap V^L(w_R)$, and we are left to prove that this is the only irreducible representation in common.
    
    Begin with $V^L(w_L)$. Recall that $w_L$ is the longest element in the stabilizer subgroup of $\lambda_L$ consisting of integer coordinates. Let $\mathcal{Q}$ be a nilpotent orbit
    $$\mathcal{Q} = \begin{cases} (2\alpha_{2q}, 2\alpha_{2q-1},\dots, 2\alpha_1,2\alpha_0) \subseteq \mathfrak{sp}_n & \textrm{if}\ \widetilde{G}_{\frac12}' = \mathrm{Spin}_{n+1}(\mathbb{C})\\
    (2\alpha_{2q+1}, 2\alpha_{2q}, 2\alpha_{2q-1},\dots, 2\alpha_1,2\alpha_0) \subseteq \mathfrak{so}_n & \textrm{if}\ \widetilde{G}_{\frac12}' = \mathrm{Spin}_{n}(\mathbb{C}) \end{cases}.$$
    By \cite[Proposition 5.28]{BV85}, $V^L(w_L)$
    consists of Springer representations:
    $$V^L(w_L) = \{\mathrm{Spr}
    (\mathcal{Q}_{I})\ |\ I \subseteq \{1,2,\dots,q\}\},$$
    where $\mathcal{Q}_I$ is defined in Proposition \ref{prop:barao}. More precisely, $\mathrm{Spr}(Q_{\emptyset}) = \mathrm{Spr}(\mathcal{P}) = (\cdots, \alpha_{2j-1}, \cdots, \alpha_3,\alpha_1) \times(\cdots, \alpha_{2j}, \cdots, \alpha_4,\alpha_2,\alpha_0)$ is the unique special $W_{\frac12}$-representation in $V^L(w_L)$, and 
    $$\mathrm{Spr}(\mathcal{Q}_I) = (\dots, \alpha_{2j-1}', \dots, \alpha_3', \alpha_1') \times (\dots, \alpha_{2j}', \dots, \alpha_4', \alpha_2',\alpha_0),$$
    where $(\alpha_{2j-1}',\alpha_{2j}')$ is given by $$\begin{cases} (\alpha_{2j-1},\alpha_{2j}) & \textrm{if}\ j \notin I \\
    (\alpha_{2j-1} + \min \{\alpha_{2j+1} - \alpha_{2j-1}, \alpha_{2j} - \alpha_{2j-2}\}, \alpha_{2j} - \min \{\alpha_{2j+1} - \alpha_{2j-1}, \alpha_{2j} - \alpha_{2j-2}\}) & \textrm{if}\ j \in I \end{cases}.$$

    As for $V^L(w_R)$, note that it is isomorphic to $j_{W_{\frac12}^{\lambda_R}}^{W_{\frac12}}(\mathrm{triv})$ by the proof of Lemma \ref{lem-P}. Here $W_{\frac12}^{\lambda_R}$ is of type $(\cdots \times A_{\beta_2-1} \times A_{\beta_1-1})$, where $(\cdots, \beta_2, \beta_1) = (\cdots, \alpha_2, \alpha_1,\alpha_0)^T$ is the dual partition of the $\alpha$'s, so that one can compute the (non-truncated) induction $$\mathrm{Ind}_{W_{\frac12}^{\lambda_R}}^{W_{\frac12}}(\mathrm{triv}) = \mathrm{Ind}_{\prod_k W(A_{\beta_k-1})}^{W_{\frac12}}(\mathrm{triv})$$
    using the Littlewood-Richardson rule (see Example below), and check that none of the irreducible representations in the induced module contains $\mathrm{Spr}
    (\mathcal{Q}_{I})$ for $I \neq \emptyset$. Then the result follows.
\end{proof}

\begin{example}
In Example \ref{eg-P}, one has $\mathcal{Q} = (8,6,4,2,0)$ in  $\mathfrak{sp}_{20}(\mathbb{C})$, then
\begin{align*}V^L(w_L) &= \{\mathrm{Spr}((8,6,4,2,0)),\ \mathrm{Spr}((9,5,4,2,0)),\ \mathrm{Spr}((8,6,5,1,0)),\ \mathrm{Spr}((9,5,5,1,0))\} \\
&= \{31 \times 42, 51 \times 22, 33 \times 4, 53 \times 2\}
\end{align*}
as given in Example \ref{eg:charform}. On the other hand, one has 
$W_{\frac12}^{\lambda_R} = W(A_3 \times A_2 \times A_1 \times A_0)$ and the non-truncated induction
$\mathrm{Ind}_{W(A_3 \times A_2 \times A_1 \times A_0)}^{W(B_{10})}(\mathrm{triv})$ has partition of the form
\begin{equation}  \label{eq-LR}
(\ell_4 \odot \ell_3 \odot \ell_2 \odot \ell_1)^T \times (r_4 \odot r_3 \odot r_2 \odot r_1)^T,
\end{equation}
where $\ell_p + r_p = p$, and $\displaystyle \pi \odot \rho := \bigoplus_{\nu} c_{\pi\rho}^{\nu}\ \nu$ (here $c_{\pi\rho}^{\nu}$ is the Littlewood-Richardson coefficient, and the partitions $\pi$, $\rho$, $\nu$ are labeled in terms of rows). Then one can easily check that except $31 \times 42$, none of the partitions in $V^L(w_L)$ appears in \eqref{eq-LR}, and hence $V^L(w_L) \cap V^L(w_R) = \{31 \times 42\}$.

Indeed, one can compute $V^L(w_R)$ explicitly (this method works for $W_\frac12 = W(B_{n/2})$ only): Let $\mathcal{S} = (9,5,5,1,1)$ be a nilpotent orbit in $\mathfrak{so}_{21}(\mathbb{C})$. Similar to the $V^L(w_L)$ case, one has
\begin{align*}
V^L(w_R) &= \{\mathrm{Spr}(\mathcal{S})=\mathrm{Spr}((9,5,5,1,1)),\ \mathrm{Spr}((9,6,4,1,1)),\ \mathrm{Spr}((9,5,5,2,0)),\ \mathrm{Spr}((9,6,4,2,0))\}\\
&= \{31 \times 42, 21 \times 43, 3 \times 421, 2 \times 431\},
\end{align*}
all of which satisfy Equation \eqref{eq-LR} (for instance, $2 \times 431$ can be obtained from $(1 \odot 1 \odot 0 \odot 0)^T \times (3 \odot 2 \odot 2 \odot 1)^T$), and the only representation in common is $31 \times 42$.
\end{example}

We are now in the position to prove Theorem \ref{thm:unique}:

\bigskip
\noindent {\it Proof of Theorem \ref{thm:unique}.} By the first paragraph of the proof of \cite[Theorem 5.1]{MG94} again, the annihilator varieties
$\mathcal{V}(\mathrm{LAnn}(J(w,w')))$ and $ \mathcal{V}(\mathrm{RAnn}(J(w,w')))$ can be obtained as follows:
\begin{enumerate}
    \item The Weyl group representation 
    $$\sigma_L(w,w') := j_{W_{\frac12} \times W_{\frac12}}^{W} \Big (\mathrm{Spr}(\mathcal O_L (w)) \otimes \mathrm{Spr}(\mathcal O_L (w') \Big )$$ 
    is Springer,
    where $\mathcal O_L (w)$ and $\mathcal O_L(w')$ are defined by 
    $$ \mathcal{V}(\mathrm{LAnn}(J(w))=\overline{\mathcal O_L (w)}\  \text{ and  }\ 
     \mathcal{V}(\mathrm{LAnn}(J(w'))=\overline{\mathcal O_L (w')};$$
    \item $\mathcal{V}(\mathrm{LAnn}(J(w,w')))$ is the closure of the nilpotent orbit $\mathrm{Spr}^{-1}\left(\sigma_L(w,w')\right)$.
\end{enumerate}
(and analogous statements hold  for $\mathcal{V}(\mathrm{RAnn}(J(w,w')))$). By Proposition \ref{prop:uniquehalf}, the above Weyl group representation has highest generic degree ($\Leftrightarrow$ the orbit has smallest GK dimension) if and only if $w = w' = e$. The corresponding nilpotent orbit is
\begin{center}
$\mathrm{Spr}^{-1}\left(\sigma_{\bullet}(e,e)\right)$ ($\bullet = L$ or $R$),
\end{center} 
where $\sigma_{\bullet}(e,e) = j_{W_{\frac12} \times W_{\frac12}}^{W}(\mathrm{Spr}(\mathcal{P}) \otimes \mathrm{Spr}(\mathcal{P}))$ and 
$\mathcal{P}$ is as given in Lemma \ref{lem-P}. By Remark \ref{rmk:wh} and $j$-induction in stages, 
$$\sigma_{\bullet}(e,e) = j_{W_{\frac12}^{\flat} \times W_{\frac12}^{\flat}}^{W}(\mathrm{sgn}).$$ 
Then the result follows directly from Proposition \ref{prop-whw'}. \qed 

\begin{example} \label{eg:oprime}
    Continue with Example \ref{eg-P} above. Then $\mathcal{P} = (9,5,5,1,1)$ and $\mathrm{Spr}(\mathcal{P}) = 31 \times 42$. Therefore, 
    $$\Phi :=j_{W(B_{10}) \times W(B_{10})}^{W(B_{20})}(\mathrm{Spr}(\mathcal{P}) \otimes \mathrm{Spr}(\mathcal{P})) = 3311 \times 4422,$$ 
    and one can easily verify that $\Phi \in W(B_{20})^{\wedge}$ is a Springer representation with $$\mathrm{Spr}^{-1}(\Phi) = (9,8,6,5,5,4,2,1,1) = \mathcal{O}'.$$
\end{example}

\subsection{Proof of Theorem \ref{thm:main}} \label{sec:main}
Let $\Phi =  \mathrm{Spr}(\mathcal{O}') \in \widehat{W}$ be the Springer representation corresponding to $\mathcal{O}'$. Recall from Proposition \ref{prop-whw'} and Remark \ref{rmk:wh} that
$$\Phi = j_{W_{\frac12} \times W_{\frac12}}^W(\mathrm{Spr}(\mathcal{P}) \otimes \mathrm{Spr}(\mathcal{P})) =  j_{W_{\frac12}^{\flat} \times W_{\frac12}^{\flat}}^W(\mathrm{sgn} \otimes \mathrm{sgn}) = j_{W^{\flat}}^{W}(\mathrm{sgn}),$$
where $\mathcal{P}$ is as given in Equation \eqref{eq-Pdef}, and $W^{\flat} \leq W$ is the Weyl subgroup appearing in the character formula of $\mathrm{Lift}(R_{\emptyset})$ in Proposition \ref{prop:lift} and Example \ref{eg:161284}.

By projecting $X(e,e) \in \mathcal{B}$ in the coherent continuation representation to its $(\mathrm{Spr}(\mathcal{P}) \otimes \mathrm{Spr}(\mathcal{P}))$-isotypic component:
$$\mathrm{pr}_{\mathrm{Spr}(\mathcal{P}) \otimes \mathrm{Spr}(\mathcal{P})}(X(e,e)) := \sum_{w \in W_{\frac12} \times W_{\frac12}} \mathrm{tr}_{\mathrm{Spr}(\mathcal{P}) \otimes \mathrm{Spr}(\mathcal{P})}(w) (w \cdot X(e,e))$$
is a virtual representation consisting of irreducible representations with associated variety equal to the closure of $\mathcal{O}' = \mathrm{Spr}^{-1}\left(j_{W_{\frac12} \times W_{\frac12}}^{W}(\mathrm{Spr}(\mathcal{P}) \otimes \mathrm{Spr}(\mathcal{P}))\right) = \mathrm{Spr}^{-1}(\Phi)$. By \cite[Proposition 6.6]{BV85}, the right hand side of the above formula can be reduced to 
$$\sum_{w \in W_{\frac12}^{\flat} \times W_{\frac12}^{\flat}} \mathrm{sgn}(w) (w \cdot X(e,e)) = \sum_{w \in W^{\flat}} \mathrm{sgn}(w) (w \cdot X(e,e)) $$
which is precisely the character formula of $\mathrm{Lift}(R_{\emptyset})$ in Proposition \ref{prop:lift}. In other words, $\mathrm{Lift}(R_{\emptyset})$ is a linear combination of genuine unipotent representations $\widetilde{\mathcal{U}}  (\mathcal{O}')$. Then the Theorem \ref{thm:unique} implies that
$$\mathrm{Lift}(R_{\emptyset}) = \begin{cases} a\widetilde{\pi}(\mathcal{O}') &\textrm{for}\ \widetilde{G'} = \mathrm{Spin}_{2n+1}(\mathbb{C}) \\
a\widetilde{\pi}(\mathcal{O}') + b \widetilde{\pi}(\mathcal{O}')^{\vee}&\textrm{for}\ \widetilde{G'} = \mathrm{Spin}_{2n}(\mathbb{C}) \end{cases}$$
for some $a, b \in \mathbb{Z}$. Then the theorem follows by comparing the lowest $K$-type multiplicities on both sides of the equation.

\section*{Acknowledgments}
This project was initiated at the Conference ``Representation Theory XVIII'' in Dubrovnik, June 2023. The authors would like to thank the organizers for their hospitality. Tsai is supported by the National Science and Technology Council of Taiwan under grant no. 114-2115-M-008-003-MY3.  Wong is supported by the National Natural Science Foundation of China (no. 12341101, 12371033).

\end{document}